\newtheorem{thm}{Theorem}[section]
\newtheorem{lem}{Lemma}[section]
\newtheorem{prop}{Proposition}[section]
\theoremstyle{definition}
\newtheorem{defn}{Definition}[section]
\theoremstyle{remark}
\newtheorem{rem}{Remark}[section]
\numberwithin{equation}{section}
\title[Decoupling Elastic Waves and Its Applications]{Decoupling Elastic Waves and Its Applications}
\author{Hongyu Liu}
\address{Department of Mathematics, Hong Kong Baptist University, Kowloon, Hong Kong SAR.}
\email{hongyuliu@hkbu.edu.hk}
\author{Jingni Xiao}
\address{Department of Mathematics, Hong Kong Baptist University, Kowloon, Hong Kong SAR.}
\email{xiaojn@live.com}
\begin{document}

\begin{abstract}

In this paper, we consider time-harmonic elastic wave scattering governed by the Lam\'e system. It is known that the elastic wave field can be decomposed into the shear and compressional parts, namely, the pressure and shear waves that are generally coexisting, but propagating at different speeds. We consider the third or fourth kind scatterer and derive two geometric conditions, respectively, related to the mean and Gaussian curvatures of the boundary surface of the scatterer that can ensure the decoupling of the shear and pressure waves. Then we apply the decoupling results to the uniqueness and stability analysis for inverse elastic scattering problems in determining polyhedral scatterers by a minimal number of far-field measurements.

\medskip

\noindent{\bf Keywords}. Elastic scattering, Lam\'e system, shear and pressure waves, decoupling, inverse elastic scattering, uniqueness and stability \smallskip

\noindent{\bf Mathematics Subject Classification (2010)}:  Primary 74J20, 35R30; Secondary 58J50, 74B05.

\end{abstract}

\maketitle

\tableofcontents

\section{Introduction}\label{sect:1}

In this paper, we are mainly concerned with the time-harmonic elastic wave scattering due to an incident plane wave and an impenetrable scatterer. Let $D\subset\mathbb{R}^3$ be a bounded domain with a Lipschitz boundary $\partial D$ and a connected complement $\mathbb{R}^3\backslash\overline{D}$. $D$ represents an impenetrable elastic scatterer embedded in an infinite isotropic and homogenous elastic medium in $\mathbb{R}^3$ that is characterized by the Lam\'e constants $\lambda$ and $\mu$ satisfying $\mu>0$ and $3\lambda+2\mu>0$. Consider a time-harmonic elastic plane wave $U^{in}(x)$, $x=(x_j)_{j=1}^3\in\mathbb{R}^3$ (with the time variation of the form $e^{-i\omega t}$ being factorized out, where $\omega\in\mathbb{R}_+$ denotes the angular wavenumber) impinging on the scatterer $D$. The propagation of the elastic plane wave will be interrupted and this leads to the so-called scattering. The elastic scattering is governed by the Lam\'e system (or the reduced Navier equation) (cf. \cite{Kup, Landau}),
\begin{equation}\label{eq:lames}
(\Delta^*+\omega^2) U=0\quad\mbox{in}\quad\mathbb{R}^3\backslash\overline{D},
\end{equation}
where $U(x)=(U_j)_{j=1}^3\in\mathbb{C}^3$ denotes the total displacement field. In \eqref{eq:lames}, the operator $\Delta^*$ is defined by
\begin{equation}\label{eq:deltas}
\begin{split}
[\Delta^*U]_j=&\sum_{l=1}^3\frac{\partial}{\partial x_l}\left\{\mu\left(\frac{\partial U_j}{\partial x_l}+\frac{\partial U_l}{\partial x_j} \right)\right\}+\frac{\partial}{\partial x_j}\left[\lambda\nabla\cdot U\right]\\
=&\mu\Delta U_j+(\lambda+\mu)\frac{\partial}{\partial x_j}[\nabla\cdot U]\\
=&-\mu[\nabla\wedge\nabla\wedge U]_j+(\lambda+2\mu)\frac{\partial}{\partial x_j}[\nabla\cdot U],\ \ j=1,2,3.
\end{split}
\end{equation}

Throughout, the incident/impinging elastic plane wave is of the following general form
\begin{equation}\label{eq:planewave}
\begin{split}
U^{in}(x)&=U^{in}(x; d, d^{\perp},\alpha_p,\alpha_s,\omega)\alpha_p d e^{ik_pd\cdot x}+\alpha_s d^{\perp} e^{ik_sd\cdot x}\\
&:=U^{in}_p+U^{in}_s,\qquad d, d^{\perp}\in\mathbb{S}^2,\quad \alpha_p, \alpha_s\in\mathbb{C},
\end{split}
\end{equation}
where $d\in\mathbb{S}^2:=\{x\in\mathbb{R}^3; \|x\|=1\}$, is the incident/impinging direction; $d^\perp\in\mathbb{S}^2$ satisfying $d\cdot d^\perp=0$ is the polarization direction; and $k_s:=\omega/\sqrt{\mu}$, $k_p:=\omega/\sqrt{\lambda+2\mu}$ respectively denote the shear and compressional wavenumbers.
{In what follows, for a given vector $d\in\mathbb{S}^2$, $d^\perp$ shall always denote a unit vector that is perpendicular to $d$.}
If $\alpha_p\neq 0$ while $\alpha_s=0$ for $U^{in}$ in \eqref{eq:planewave}, then $U^{in}=U^{in}_p:=\alpha_p d e^{ik_p x\cdot d}$ is the plane pressure wave; whereas if $\alpha_p=0$ and $\alpha_s\neq 0$, then $U^{in}=U^{in}_s:=\alpha_s d^\perp e^{ik_p x\cdot d}$ is the plane shear wave. The total elastic wave field $U$ in \eqref{eq:lames} is composed of the incident plane wave $U^{in}$ in \eqref{eq:planewave} and the scattered wave $U^{sc}$,
\begin{equation}\label{eq:scatteredwave}
U(x)=U^{in}(x)+U^{sc}(x),\quad x\in\mathbb{R}^3\backslash\overline{D}.
\end{equation}
It is straightforward to verify that the scattered wave field $U^{sc}=U-U^{in}$ satisfy the Lam\'e system
\begin{equation}\label{eq:lames2}
(\Delta^*+\omega^2) U^{sc}=0\quad\mbox{in}\quad\mathbb{R}^3\backslash\overline{D}.
\end{equation}
The scattered wave $U^{sc}$ is radiating and required to satisfy the Kupradze radiation condition,
\begin{equation}\label{eq:radiation}
\begin{split}
(\nabla\wedge\nabla\wedge U^{sc})(x)\wedge\frac{x}{\|x\|}-ik_s\nabla\wedge U^{sc}(x)&=\mathcal{O}(\|x\|^{-2}),\\
\frac{x}{\|x\|}\cdot[\nabla(\nabla\cdot U^{sc})](x)-ik_p \nabla\cdot U^{sc}(x)&=\mathcal{O}(\|x\|^{-2}),
\end{split}
\end{equation}
as $\|x\|\rightarrow+\infty$, which holds uniformly in all directions $\hat{x}:=x/\|x\|$ for $x\in\mathbb{R}^3\backslash\{0\}$.

To complete the description of the elastic scattering problem, we next complement the system \eqref{eq:lames}--\eqref{eq:radiation} with a suitable boundary condition on the exterior boundary of the impenetrable scatterer $D$. Depending on the physical property of the scatterer, the following four kinds of boundary conditions have been widely considered in the literature. The first kind boundary condition is given by
\begin{equation}\label{eq:firstkind}
U=0\quad\mbox{on}\quad\partial D.
\end{equation}
The second kind boundary condition is given by
\begin{equation}\label{eq:secondkind}
TU=0\quad\mbox{on}\quad\partial D,
\end{equation}
where $T$ is the stress (or traction) operator on $\partial D$ defined by
\begin{equation}\label{eq:tractionoperator}
TU:=2\mu\partial_\nu U+\lambda\,\nu\nabla\cdot U+\mu\,\nu\wedge(\nabla\wedge U),
\end{equation}
with $\nu\in\mathbb{S}^2$ the outward unit normal vector to $\partial D$, and $\partial _\nu$ the boundary differential operator defined as
\[
\partial_\nu U:=[\nu\cdot\nabla U_{j}]_{j=1}^3.
\]
The boundary conditions of the third and fourth kinds are, respectively, given by
\begin{equation}\label{eq:thirdkind}
\nu\cdot U=0\quad\mbox{and}\quad\nu\wedge TU=0\qquad\mbox{on}\ \ \partial D,
\end{equation}
and
\begin{equation}\label{eq:fourthkind}
\nu\wedge U=0\quad\mbox{and}\quad\nu\cdot TU=0\qquad\mbox{on}\ \ \partial D.
\end{equation}

The elastic system \eqref{eq:lames}--\eqref{eq:radiation} associated with either of the four kinds of boundary conditions in \eqref{eq:firstkind}--\eqref{eq:fourthkind} is well understood with a unique solution $U\in H_{loc}^1(\mathbb{R}^3\backslash\overline{D})$. In particular, if $\partial D$ is $C^2$-smooth, then $U\in C^2(\mathbb{R}^3\backslash\overline{D})^3\cap C^1(\mathbb{R}^3\backslash D)^3$. We refer to \cite{ElY10,Kup} for the related results.

It is widely known that similar to \eqref{eq:planewave}, one has the following decomposition
\begin{equation}\label{eq:decomp1}
U^{sc}=U_p^{sc}+U_s^{sc},\ \ U_p^{sc}:=-\frac{1}{k_p^2}\nabla(\nabla\cdot U^{sc}),\ U_s^{sc}:=\frac{1}{k_s^2}\nabla\wedge (\nabla\wedge U^{sc}),
\end{equation}
where the vector functions $U_p^{sc}$ and $U_s^{sc}$ are referred to as the pressure (longitudinal) and shear (transversal) parts of $U^{sc}$, respectively, satisfying
\begin{align}
(\Delta+k_p^2) U_p^{sc}=0,\quad \nabla\wedge U_p^{sc}=0\quad &\mbox{in}\ \ \mathbb{R}^3\backslash\overline{D},\label{eq:decompp}\\
(\Delta+k_s^2) U_s^{sc}=0,\quad \nabla\cdot U_s^{sc}=0\quad &\mbox{in}\ \ \mathbb{R}^3\backslash\overline{D},\label{eq:decomps}.
\end{align}
Moreover, the Kupradze radiation condition \eqref{eq:radiation} is equivalent to the Sommerfeld radiation condition,
\begin{eqnarray}
\frac{x}{\|x\|}\cdot[\nabla U_p^{sc}]_j-ik_p[U_p^{sc}]_j & = & \mathcal{O}(\|x\|^{-2}),\label{eq:radiations1}\\
\frac{x}{\|x\|}\cdot[\nabla U_s^{sc}]_j-ik_s[U_s^{sc}]_j & = & \mathcal{O}(\|x\|^{-2}), \label{eq:radiations2}
\end{eqnarray}
uniformly in all directions $\hat x$ as $\|x\|\rightarrow+\infty$, for $j=1,2,3$. In terms of \eqref{eq:planewave} and \eqref{eq:decomp1}, one has the corresponding decomposition of the total wave field as follows,
\begin{equation}\label{eq:decompt}
U=U_p+U_s,\quad U_p:=U_p^{in}+U_p^{sc},\ \ U_s:=U_s^{in}+U_p^{sc}.
\end{equation}

Using the decomposition described above, there holds the following asymptotic expansion as $\|x\|\rightarrow+\infty$,
\begin{equation}\label{eq:farfield}
U^{sc}(x)=\frac{\exp(ik_p\|x\|)}{\|x\|} U_p^\infty(\hat x)+\frac{\exp(ik_s\|x\|)}{\|x\|} U_s^\infty(\hat x)+\mathcal{O}\left(\frac{1}{\|x\|^2}\right).
\end{equation}
$U_p^{\infty}$ and $U_s^\infty$ are known as the P-part (longitudinal part) and S-part (transversal part) of the far-field pattern of $U^{sc}$. Moreover, using the decomposition \eqref{eq:decomp1}, $U_p^\infty$ and $U_s^\infty$ are the far-field patterns of $U_p^{sc}$ and $U_s^{sc}$, respectively. In this paper, we define the total field pattern to be
\begin{equation}\label{eq:farfieldt}
U_t^\infty(\hat x):=U_p^\infty(\hat x)+U_s^\infty(\hat x).
\end{equation}
It is easy to verify that
\begin{equation}\label{eq:farfieldd}
U_p^\infty(\hat x)=[U_t^\infty(\hat x)\cdot \hat x]\hat x\quad\mbox{and}\quad U_s^\infty(\hat x)=[\hat x\wedge U_t^\infty(\hat x)]\wedge\hat x.
\end{equation}

Though the elastic wave field $U^{sc}$ can be decomposed into its shear and compressional parts, the two types of waves are generally coupled together. In fact, by using the boundary conditions \eqref{eq:firstkind}--\eqref{eq:fourthkind}, one can readily see that the coupling occurs on the boundary of the scatterer $D$. That means, in general, the shear wave and the pressure wave coexist, even in the case that the incident wave is a pure plane pressure wave or a pure plane shear wave. In this article, we shall derive two geometric conditions in terms of the mean and Gaussian curvatures of the boundary surface of the scatterer $D$, that can ensure the decoupling of the pressure and shear waves for the case with the third or fourth kind boundary condition. Clearly, if the pressure and shear waves are decoupled, then the study of the Lam\'e system \eqref{eq:lames} can be significantly simplified to the investigation of the two vectorial Helmholtz systems \eqref{eq:decompp} and \eqref{eq:decomps}. This undoubtedly would find important application in relevant study of the elastic wave scattering. Indeed, another main motivation for our this study is the inverse elastic scattering problem which concerns the determination of the scatterer $D$ by knowledge of the corresponding far-field pattern $U_\tau^\infty(\hat x)$, where $\tau=t, p$ or $s$. If one introduces an operator $\mathcal{F}$ which sends the unknown/inaccessible scatterer $D$ to the corresponding far-field pattern $U_\tau^\infty$ associated with the incident/impinging plane wave $U^{in}$ in \eqref{eq:planewave}, then the inverse scattering problem can be abstractly formulated as follows,
\begin{equation}\label{eq:ip}
\mathcal{F}(D)=U_\tau^\infty(\hat x; d, d^\perp,\alpha_p,\alpha_s,\omega, D),\ \ \hat x\in\mathbb{S}^2;\ \ \tau=t, p\ \mbox{or}\ s,
\end{equation}
where we write $U_\tau^\infty(\hat x; d, d^\perp,\alpha_p,\alpha_s,\omega, D)$ to specify its dependence on the incident plane wave as well as the underlying scatterer. The inverse problem \eqref{eq:ip} is of significant practical importance and has been extensively investigated in the literature. It is easily seen that the inverse problem \eqref{eq:ip} is nonlinear and it is known to be ill-posed in the sense of Hadamard; we refer to \cite{Amm1,Amm2,ElY10,Hu1,Hu2,NG1,NG2} and the references therein for related results on the inverse problem.

In this paper, we are particularly interested in the case that a single far-field measurement is used for the inverse problem \eqref{eq:ip}. That is, the far-field pattern $U_\tau^\infty$ is collected corresponding to a single incident plane wave with $d$, $d^\perp$, $\alpha_p$, $\alpha_s$ and $\omega$ being fixed. The inverse problem is formally-determined with a single far-field measurement. The inverse elastic scattering problem with a single far-field measurement still remains to be a challenging open problem in the literature. To our best knowledge, the only existing general uniqueness result in the literature for the inverse problem \eqref{eq:ip} with a single far-field pattern was derived in a recent work \cite{ElY10} for the determination of polyhedral scatterers endowed with the third or fourth kind boundary condition. The argument therein is based a reflection principle for the Navier equation associated with the third or fourth kind boundary condition, and the path argument developed in \cite{Liu1,Liu2} for inverse acoustic and electromagnetic scattering governed by the Helmholtz and Maxwell equations, respectively. In the current study, the polyhedral scatterer is a particular case where the pressure and shear waves can be decoupled. Using the decoupling result, we are able to derive much more general uniqueness results which cover the uniqueness results in \cite{ElY10} as two particular cases. Moreover, by using the decoupling result, we can advance the relevant study much further in deriving optimal stability estimates through the extension of the stability results in our recent work \cite{LMRX}. These are the first stability results in the literature in determining elastic obstacles by a single far-field measurement. 

The rest of the paper is structured as follows. In Section~\ref{sect:2}, we derive the geometric conditions for the decoupling of elastic waves. Section 3 is devoted to a case study when the underlying scatterer is a ball. In Section~4, we apply the decoupling result to the study of the uniqueness and stability for the inverse elastic scattering problems. The paper is concluded in Section 5 with some relevant discussion.

\section{Decoupling elastic waves}\label{sect:2}

In this section, we consider the decoupling of the pressure and shear waves for the elastic system \eqref{eq:lames}--\eqref{eq:radiation} associated with, respectively, the third kind boundary condition \eqref{eq:thirdkind} and the fourth kind boundary condition \eqref{eq:fourthkind}. For notational convenience, we write $U(x; U^{in}, [D, \mathrm{III}])=U(x; d,d^\perp,$ $\alpha_p,\alpha_s,\omega,[D, \mathrm{III}])$ and $U(x; U^{in}, [D, \mathrm{IV}])=U(x; d,d^\perp,\alpha_p,\alpha_s,\omega,[D, \mathrm{IV}])$ to denote the total wave fields corresponding the third and fourth kinds scatterers, respectively. The same notations shall be employed for the scattered wave fields and the far-field patterns, as well as the scattering problems associated with the first and second kinds of boundary conditions. Throughout the rest of the paper, we assume that the Lipschitz scatterer $D$ has a piecewise $C^{2,1}$-smooth boundary. Since the Lam\'e system \eqref{eq:lames} is strongly elliptic, we know that the $H^{1}_{loc}(\mathbb{R}^3\backslash\overline{D})$ solution is locally $H^3$ near a regular piece of $\partial D$ (see \cite{McL}).

Let $\Gamma$ be a regular piece of $\partial D$ and be represented in the following parametric form,
\begin{equation}\label{eq:pf1}
x(u)=\big(x_1(u_1,u_2), x_2(u_1,u_2), x_3(u_1,u_2)\big)^{\mathrm{T}},\ \ u=(u_1,u_2)^{\mathrm{T}}\in\mathbb{R}^2,
\end{equation}
where the superscript $\mathrm{T}$ denotes the transpose of a matrix. Denote by $g=(g_{jk})_{j,k=1}^2$ the first fundamental matrix of differential geometry for $\Gamma$ with
\begin{equation}\label{eq:fdmm}
g_{jk}:=\frac{\partial x}{\partial u_j}\cdot\frac{\partial x}{\partial u_k},\ \ \ j, k=1,2.
\end{equation}
Henceforth, we assume that the parametric form \eqref{eq:pf1} is such chosen that
\begin{equation}\label{eq:pf2}
\nu=\nu(x)=\frac{1}{\sqrt{|g|}}\frac{\partial x}{\partial u_1}\wedge\frac{\partial x}{\partial u_2},\ \ \ |g|:=\mbox{det}(g),
\end{equation}
is the outward unit normal vector of $\partial D$ on $\Gamma$. In what follows, we let
\begin{equation}\label{eq:curvatures}
H(x)=\frac 1 2 (\kappa_1+\kappa_2)(x)\quad\mbox{and}\quad K(x)=(\kappa_1\kappa_2)(x), \quad x\in \Gamma,
\end{equation}
denote, respectively, the mean and Gaussian curvatures at $x\in \Gamma$, where $\kappa_1$ and $\kappa_2$ are the principle curvatures of $\Gamma$ at $x\in\Gamma$. We also denote by $\mathrm{Grad}_\Gamma$ the surface gradient operator on $\Gamma$ (cf. \cite{CK12,Ned}), and recall that for any $\varphi\in C^1(\partial D)$,
\begin{equation}\label{eq:sg1}
\mathrm{Grad}_\Gamma\varphi=\sum_{j,k=1}^2 g^{jk}\frac{\partial\varphi}{\partial u_j}\frac{\partial x}{\partial u_k},
\end{equation}
where
\begin{equation}\label{eq:sg2}
(g^{jk})_{j,k=1}^2:=\big[(g_{jk})_{j,k=1}^2\big ]^{-1}.
\end{equation}

\subsection{The fourth kind boundary condition}\label{sect:2.1}

We first treat the scattering problem associated with the fourth kind boundary condition and have that

\begin{thm}\label{thm:fourthkind}
Let $\Gamma\subset\partial D$ be a regular piece of the boundary surface of the scatterer $D$. Suppose that there holds
\begin{equation}\label{eq:gcmc}
H(x)= 0\quad\mbox{for}\ \ x\in\Gamma.
\end{equation}
Then for $U=(U_j)_{j=1}^3$ a solution to the Lam\'e system \eqref{eq:lames}--\eqref{eq:radiation} satisfying the fourth kind boundary condition on $\Gamma$,
\begin{equation}\label{eq:fourthkindga}
\nu\wedge U=0\qquad\mbox{and}\qquad \nu\cdot TU=0\qquad\mbox{on}\ \ \Gamma,
\end{equation}
if and only if 
\begin{equation}\label{eq:decompps2}
\nabla\cdot U=0\qquad\mbox{and}\qquad \nu\wedge\left(\nabla\wedge (\nabla\wedge U)\right)=0\qquad\mbox{on}\ \ \Gamma.
\end{equation}
\end{thm}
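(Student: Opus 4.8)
The plan is to reduce the asserted equivalence to two identities holding on $\Gamma$, after which both implications become pure algebra.

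First I would record two structural facts. Since $\nu\cdot(\nu\wedge v)=0$ for any vector $v$, dotting the traction operator \eqref{eq:tractionoperator} with $\nu$ kills its last term, so that
\[
\nu\cdot TU=2\mu\,(\nu\cdot\partial_\nu U)+\lambda\,\nabla\cdot U\qquad\text{on }\Gamma .
\]
Next, the third expression for $\Delta^*U$ in \eqref{eq:deltas} together with the Lam\'e equation \eqref{eq:lames} gives, since $k_s^2=\omega^2/\mu$, the pointwise identity $\nabla\wedge(\nabla\wedge U)=\tfrac{\lambda+2\mu}{\mu}\,\nabla(\nabla\cdot U)+k_s^2\,U$ in $\mathbb{R}^3\backslash\overline D$, and hence, on $\Gamma$,
\[
\nu\wedge(\nabla\wedge(\nabla\wedge U))=\frac{\lambda+2\mu}{\mu}\,\nu\wedge\nabla(\nabla\cdot U)+k_s^2\,\nu\wedge U .
\]

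The second, and main, ingredient is a surface-calculus evaluation of $\nu\cdot\partial_\nu U$ and $\nabla\cdot U$ along $\Gamma$. Writing $U=(\nu\cdot U)\,\nu+U_T$ on $\Gamma$ with $U_T$ the tangential part, and using that $U\in H^3_{loc}(\mathbb{R}^3\backslash\overline D)$ near the regular $C^{2,1}$ piece $\Gamma$ so that all the relevant traces are defined, I would decompose $\nabla U$ into tangential and normal parts along $\Gamma$, working in the parametrization \eqref{eq:pf1}--\eqref{eq:sg2}, to obtain
\[
\nu\cdot\partial_\nu U=\partial_\nu(\nu\cdot U),\qquad
\nabla\cdot U=\partial_\nu(\nu\cdot U)+2H\,(\nu\cdot U)+\mathrm{div}_\Gamma U_T\quad\text{on }\Gamma ,
\]
where $2H=\kappa_1+\kappa_2$ as in \eqref{eq:curvatures}, $\mathrm{div}_\Gamma$ is the surface divergence, and the curvature enters through $\mathrm{div}_\Gamma\nu=\kappa_1+\kappa_2$. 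Eliminating $\partial_\nu(\nu\cdot U)$ between these and using $\nu\cdot TU=2\mu\,(\nu\cdot\partial_\nu U)+\lambda\,\nabla\cdot U$ yields the key formula
\[
\nu\cdot TU=(\lambda+2\mu)\,\nabla\cdot U-4\mu H\,(\nu\cdot U)-2\mu\,\mathrm{div}_\Gamma U_T\qquad\text{on }\Gamma ,
\]
which, under the hypothesis $H\equiv 0$ on $\Gamma$ and whenever $\nu\wedge U=0$ on $\Gamma$ (so that $U_T=0$ and $\mathrm{div}_\Gamma U_T=0$ there), collapses to $\nu\cdot TU=(\lambda+2\mu)\,\nabla\cdot U$ on $\Gamma$.

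The two directions then follow at once. If \eqref{eq:fourthkindga} holds and $H\equiv 0$ on $\Gamma$, the collapsed key formula forces $\nabla\cdot U=0$ on $\Gamma$; since by \eqref{eq:sg1} the surface gradient of a function vanishing on $\Gamma$ is zero, $\nabla(\nabla\cdot U)$ is normal to $\Gamma$ there, so $\nu\wedge\nabla(\nabla\cdot U)=0$, and the boundary identity for $\nu\wedge(\nabla\wedge(\nabla\wedge U))$ then gives $\nu\wedge(\nabla\wedge(\nabla\wedge U))=0$; together with $\nabla\cdot U=0$ this is \eqref{eq:decompps2}. Conversely, if \eqref{eq:decompps2} holds and $H\equiv 0$ on $\Gamma$, then $\nabla\cdot U=0$ on $\Gamma$ again gives $\nu\wedge\nabla(\nabla\cdot U)=0$, so the same boundary identity reduces $\nu\wedge(\nabla\wedge(\nabla\wedge U))=0$ to $k_s^2\,\nu\wedge U=0$, i.e.\ $\nu\wedge U=0$ on $\Gamma$; feeding $\nu\wedge U=0$, $\nabla\cdot U=0$ and $H\equiv 0$ into the key formula gives $\nu\cdot TU=0$, and with $\nu\wedge U=0$ this is \eqref{eq:fourthkindga}. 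I expect the only delicate step to be the surface-divergence identity of the third paragraph — pinning down the mean-curvature term with exactly the right constant and sign, and justifying the computation at the level of traces on a merely piecewise-$C^{2,1}$ surface rather than on a smooth one; this is cleanest in the explicit parametrization already fixed above. I also note that the argument is entirely local near $\Gamma$ and uses neither the Kupradze radiation condition \eqref{eq:radiation} nor any global solvability.
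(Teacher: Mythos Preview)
Your proof is correct and follows essentially the same approach as the paper: both reduce $\nu\cdot TU$ to $(\lambda+2\mu)\,\nabla\cdot U$ on $\Gamma$ via a surface-calculus identity relating $\nu\cdot\partial_\nu U$, $\nabla\cdot U$ and the mean curvature under $\nu\wedge U=0$ (the paper derives this componentwise in parametrized coordinates, arriving at $\nu\cdot\partial_\nu U=\nabla\cdot U-(\nu\cdot U)\,\mathrm{tr}(\mathcal D[\nu])$, whereas you invoke the general decomposition $\nabla\cdot U=\partial_\nu(\nu\cdot U)\pm 2H(\nu\cdot U)+\mathrm{div}_\Gamma U_T$), and both handle the curl--curl condition through the identity $k_s^{-2}\nabla\wedge(\nabla\wedge U)=U+k_p^{-2}\nabla(\nabla\cdot U)$ coming from the Lam\'e equation. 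Your remark that the argument is purely local and does not use the radiation condition is also made in the paper.
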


\begin{proof}

Let $\nu(x)=(\nu_j(x))_{j=1}^3\in\mathbb{S}^2$ be the unit normal vector to $x\in\Gamma$ pointing to the exterior of $D$.  Define
\begin{equation}\label{eq:matrix1}
\mathcal{D}[\nu](x):=\Big[\mathrm{Grad}_\Gamma\nu_1(x), \mathrm{Grad}_\Gamma\nu_2(x), \mathrm{Grad}_\Gamma\nu_3(x)\Big],\quad x\in\Gamma.
\end{equation}
Since $2H(x)=-\nabla\cdot \nu(x)$ (cf. \cite{Ned}), \eqref{eq:gcmc} actually implies that
\begin{equation}\label{eq:gc1}
\mathrm{tr}(\mathcal{D}[\nu](x)):=\sum_{j=1}^3\Big[\mathrm{Grad}_\Gamma\nu_j(x)\Big]_j\equiv 0\quad\mbox{for}\ \ x\in\Gamma.
\end{equation}

Next, we show that \eqref{eq:fourthkindga} implies \eqref{eq:decompps2}. By direct calculations, it is easy to show that
\begin{equation}\label{eq:p1}
\begin{split}
&\mathrm{Grad}_\Gamma\Big(\nu_l U_j\Big)=\nu_l\mathrm{Grad}_\Gamma U_j+U_j\mathrm{Grad}_\Gamma\nu_l\\
=& \nu_l\nabla U_j-\nu_l\Big(\nu\cdot\nabla U_j\Big)\nu+ U_j\mathrm{Grad}_\Gamma\nu_{l},\quad j,l=1,2,3.
\end{split}
\end{equation}
Noting that $\nu\cdot\mathrm{Grad}_\Gamma\nu_l=0$, $l=1,2,3$, then by using the fact that $\nu\wedge U=0$ on $\Gamma$ in \eqref{eq:fourthkindga}, we have
\begin{equation}\label{eq:p2}
U\cdot\mathrm{Grad}_\Gamma \nu_l=0\qquad\mbox{on}\ \ \Gamma,\quad l=1,2,3.
\end{equation}
and
\begin{equation}\label{eq:p3}
\mathrm{Grad}_\Gamma\Big( \big[ \nu\wedge U\big]_j\Big)=0\quad \mbox{on}\ \ \Gamma, \ \ j=1,2,3.
\end{equation}
Hence, by virtue of \eqref{eq:p1}, together with straightforward computations, one further has that
\begin{equation}\label{eq:p4}
\begin{split}
0=&\sum_{l=1}^3\Big[\mathrm{Grad}_\Gamma\big(\nu_l U_j-\nu_j U_l \big) \Big]_l\\
=& \nu_j\big(\nu\cdot\partial_\nu U-\nabla\cdot U \big)+U_j\sum_{l=1}^3\Big[\mathrm{Grad}_\Gamma\nu_l\Big]_l \ \mbox{on}\ \Gamma, j=1,2,3.
\end{split}
\end{equation}
Multiplying both sides of \eqref{eq:p4} by $\nu_j$ and then summarizing the resulting equalities for $j=1,2,3$, one then arrives at
\begin{equation}\label{eq:p5}
\nu\cdot\partial_\nu U=\nabla\cdot U-(\nu\cdot U)\sum_{j=1}^3\Big[\mathrm{Grad}_\Gamma \nu_j\Big]_j\quad\mbox{on}\ \ \Gamma.
\end{equation}
By applying \eqref{eq:gc1} to \eqref{eq:p5}, we clearly have $\nu\cdot \partial_\nu U=\nabla\cdot U$ on $\Gamma$ and as a consequence, by using $\nu\cdot TU=0$ in \eqref{eq:fourthkindga}, there holds
\begin{equation}\label{eq:p6}
\begin{split}
0=& \nu\cdot TU=2\mu\nu\cdot\partial_\nu U+\lambda\nabla\cdot U\\
=& (2\mu+\lambda)\nabla\cdot U\quad \mbox{on}\ \ \Gamma.
\end{split}
\end{equation}
Finally, it is easy to have that
\begin{equation}\label{eq:p8}
\frac{1}{k_s^2}\nu\wedge\left(\nabla\wedge (\nabla\wedge U)\right)=\nu\wedge U+\frac{1}{k_p^2}\nu\wedge \Big(\nabla\big(\nabla\cdot U\big)\Big)=0\quad\mbox{on}\ \ \Gamma,
\end{equation}
which together with \eqref{eq:p6} clearly yields \eqref{eq:decompps2}.

We proceed to show that \eqref{eq:decompps2} implies \eqref{eq:fourthkindga}. In view of the two conditions in \eqref{eq:decompps2}, one readily has
\begin{equation}\label{eq:p10}
\nu\wedge U=\frac{1}{k_s^2}\nu\wedge\left(\nabla\wedge (\nabla\wedge U)\right)-\frac{1}{k_p^2}\nu\wedge\big(\nabla(\nabla\cdot U)\big)=0\ \ \mbox{on}\ \Gamma.
\end{equation}
Furthermore, by using \eqref{eq:p5} and \eqref{eq:gc1}, one clearly has $\nu\cdot\partial_\nu U=\nabla\cdot U=0$ on $\Gamma$, which in turn implies that
\begin{equation}
\begin{split}
\nu\cdot TU=& 2\mu\nu\cdot\partial_\nu U+\lambda\nabla\cdot U\\
=& (2\mu+\lambda)\nabla\cdot U=0\quad\mbox{on}\ \ \Gamma.
\end{split}
\end{equation}

The proof is complete.
\end{proof}

\begin{rem}\label{rem:decomp4th}
It is easily seen from its proof that Theorem~\ref{thm:fourthkind} holds in a more general scenario where $U$ is not necessarily a solution to the scattering system \eqref{eq:lames}--\eqref{eq:radiation}. Indeed, Theorem~\ref{thm:fourthkind} still holds, as long as $U$ is an $H^1$-solution to the Lam\'e system \eqref{eq:lames} in a neighborhood of $\Gamma$. However, in order to have a consistent exposition, we stick to the scattering system \eqref{eq:lames}--\eqref{eq:radiation} for the decoupling results, and the same remark applies to our subsequent decoupling study in the present section.
\end{rem}

\begin{rem}
From a geometric point of view, the condition \eqref{eq:gcmc} means that if $\Gamma$ is a \emph{minimal surface}, then the two boundary conditions \eqref{eq:fourthkindga} and \eqref{eq:decompps2} are equivalent to each other. 
\end{rem}

The following proposition is a direct consequence of Theorem~\ref{thm:fourthkind}.

\begin{prop}\label{prop:decom}
Let $D=[D,\mathrm{IV}]$ be a fourth kind elastic scatterer with a piecewise $C^{2,1}$-smooth boundary such that each of its regular piece satisfies the geometric condition \eqref{eq:gcmc}. Let $U([D,\mathrm{IV}])=U(\cdot; U^{in}, [D, \mathrm{IV}])$ with the incident field $U^{in}$ given by \eqref{eq:planewave}. 

Set
\begin{equation}\label{eq:dsp21}
v_p=v_p([D,\mathrm{IV}]):=-\nabla\cdot U([D,\mathrm{IV}]),\quad v_p^{sc}:=-\nabla\cdot U^{sc}([D,\mathrm{IV}]).
\end{equation}
Then $v_p$ is a solution to the following problem,
\begin{equation}\label{eq:dsp3}
\begin{cases}
& (\Delta+k_p^2) v_p=0\qquad\mbox{in}\ \ \mathbb{R}^3\backslash\overline{D},\medskip\\
& v_p=v_p^{in}+v_p^{sc}\qquad \mbox{in}\ \ \mathbb{R}^3\backslash\overline{D},\medskip\\
& v_p=0\qquad\mbox{on}\ \ \partial D,\medskip\\
& \displaystyle{\frac{x}{\|x\|}\cdot(\nabla v_p^{sc})-ik_p v_p^{sc}=\mathcal{O}(\|x\|^{-2})}\ \mbox{as}\ \|x\|\rightarrow+\infty,
\end{cases}
\end{equation}
with
\begin{equation}\label{eq:dsp22}
v_p^{in}:=-\nabla\cdot U_p^{in}=-(i\alpha_pk_p)e^{ik_px\cdot d}.
\end{equation}

In parallel, if we set
\begin{equation}\label{eq:dsp5half11}
\begin{split}
& E_s=E_s([D,\mathrm{IV}]):=\nabla\wedge U([D,\mathrm{IV}]),\  E_s^{sc}:=\nabla\wedge U^{sc}([D,\mathrm{IV}]),\\
& H_s=H_s([D,\mathrm{IV}]):=\nabla\wedge E_s/(ik_s), 
\end{split}
\end{equation}
then $(E_s,H_s)$ solves the following problem,
\begin{equation}\label{eq:dsp61}
\begin{cases}
& \nabla\wedge E_s-ik_s H_s=0\qquad \mbox{in}\ \ \mathbb{R}^3\backslash\overline{D},\medskip\\
& \nabla\wedge H_s+ik_s E_s=0\qquad \mbox{in}\ \ \mathbb{R}^3\backslash\overline{D},\medskip\\
& E_s=E_s^{in}+E_s^{sc}\qquad \mbox{in}\ \ \mathbb{R}^3\backslash\overline{D},\medskip\\
& \nu\wedge \left( \nabla\wedge E_s\right) =0\qquad \mbox{on}\ \ \partial D,\medskip\\
& \displaystyle{\frac{x}{\|x\|}\wedge (\nabla\wedge E^{sc}_s) + ik_sE^{sc}_s =\mathcal{O}(\|x\|^{-2})}\ \mbox{as}\ \|x\|\rightarrow+\infty, 
\end{cases}
\end{equation}
with
\begin{equation}\label{eq:dsp5half12}
E_s^{in}:=\nabla\wedge U^{in}=i\alpha_s k_s\, (d\wedge d^\perp)e^{ik_sd\cdot x}.
\end{equation}
\end{prop}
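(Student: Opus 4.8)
The plan is to derive each of the two stated boundary value problems from Theorem~\ref{thm:fourthkind} together with the known facts about the Helmholtz decomposition \eqref{eq:decomp1}--\eqref{eq:radiations2}. I would organize the argument in two parallel halves, one for $v_p$ and one for $(E_s,H_s)$, and in each half verify four items: the governing equation in $\mathbb{R}^3\setminus\overline{D}$, the decomposition into incident and scattered parts, the boundary condition on $\partial D$, and the radiation condition at infinity. The governing equations and radiation conditions are immediate consequences of the classical decomposition results quoted in the excerpt; the only genuine point requiring Theorem~\ref{thm:fourthkind} is the boundary condition, so that is where I would concentrate.

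\emph{The $v_p$ part.} First I would observe that $v_p := -\nabla\cdot U$ satisfies $(\Delta+k_p^2)v_p=0$ in $\mathbb{R}^3\setminus\overline{D}$, since applying $\nabla\cdot$ to the Lam\'e equation \eqref{eq:lames} in the form given in \eqref{eq:deltas} and using $\nabla\cdot(\nabla\wedge\,\cdot\,)=0$ gives $(\lambda+2\mu)\Delta(\nabla\cdot U)+\omega^2\nabla\cdot U=0$, i.e. $(\Delta+k_p^2)(\nabla\cdot U)=0$ with $k_p^2=\omega^2/(\lambda+2\mu)$. The splitting $v_p=v_p^{in}+v_p^{sc}$ follows from linearity and $U=U^{in}+U^{sc}$; the explicit formula \eqref{eq:dsp22} is the routine computation $-\nabla\cdot(\alpha_p d\,e^{ik_p d\cdot x}) = -i\alpha_p k_p\,(d\cdot d)\,e^{ik_p d\cdot x}=-i\alpha_p k_p e^{ik_p d\cdot x}$, noting that $\nabla\cdot U_s^{in}=0$ because $d\cdot d^\perp=0$. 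The Sommerfeld condition for $v_p^{sc}$ is exactly \eqref{eq:radiations1} summed appropriately (more precisely, $v_p^{sc}=-\nabla\cdot U^{sc}$ is, up to the scalar factor $-k_p^2$, the divergence giving the longitudinal scalar potential, and \eqref{eq:radiations1} transfers to it). The key step is the boundary condition $v_p=0$ on $\partial D$: this is precisely the first identity in \eqref{eq:decompps2}, $\nabla\cdot U=0$ on $\Gamma$, which holds on each regular piece $\Gamma$ by the forward implication of Theorem~\ref{thm:fourthkind} applied under the hypothesis \eqref{eq:gcmc}; since $\partial D$ is covered by such regular pieces, $v_p=0$ on all of $\partial D$.

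\emph{The $(E_s,H_s)$ part.} Here I would set $E_s:=\nabla\wedge U$ and $H_s:=\nabla\wedge E_s/(ik_s)$ and check the Maxwell system. From the Lam\'e equation in the form $-\mu\nabla\wedge\nabla\wedge U+(\lambda+2\mu)\nabla(\nabla\cdot U)+\omega^2 U=0$, taking $\nabla\wedge$ and using $\nabla\wedge\nabla(\,\cdot\,)=0$ yields $-\mu\nabla\wedge\nabla\wedge(\nabla\wedge U)+\omega^2\nabla\wedge U=0$, i.e. $(\Delta+k_s^2)E_s=0$ together with $\nabla\cdot E_s=0$; rewriting this pair as a first-order system in $(E_s,H_s)$ gives the first two lines of \eqref{eq:dsp61}, with $H_s$ defined so that $\nabla\wedge E_s=ik_s H_s$ and then $\nabla\wedge H_s = (ik_s)^{-1}\nabla\wedge\nabla\wedge E_s = (ik_s)^{-1}(-\Delta E_s)= (ik_s)^{-1}k_s^2 E_s = -ik_s E_s$. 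The decomposition $E_s=E_s^{in}+E_s^{sc}$ is again linearity, and \eqref{eq:dsp5half12} is the direct computation $\nabla\wedge(\alpha_s d^\perp e^{ik_s d\cdot x})=i\alpha_s k_s(d\wedge d^\perp)e^{ik_s d\cdot x}$, with $\nabla\wedge U_p^{in}=0$ since $U_p^{in}$ is a gradient. The radiation condition on $E_s^{sc}$ comes from the first line of the Kupradze condition \eqref{eq:radiation} (which is stated directly in terms of $\nabla\wedge\nabla\wedge U^{sc}$ and $\nabla\wedge U^{sc}$), rewritten as $\hat x\wedge(\nabla\wedge E_s^{sc})+ik_s E_s^{sc}=\mathcal{O}(\|x\|^{-2})$ up to the sign convention in \eqref{eq:dsp61}. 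The crucial step, once more, is the boundary condition $\nu\wedge(\nabla\wedge E_s)=0$ on $\partial D$: since $\nabla\wedge E_s=\nabla\wedge\nabla\wedge U$, this is exactly the second identity in \eqref{eq:decompps2}, $\nu\wedge(\nabla\wedge(\nabla\wedge U))=0$ on $\Gamma$, again supplied by Theorem~\ref{thm:fourthkind} on each regular piece and hence on $\partial D$.

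\emph{Main obstacle.} Essentially everything except the two boundary identities is bookkeeping with the standard Helmholtz--Maxwell decomposition; the substantive content is entirely carried by Theorem~\ref{thm:fourthkind}, so the ``hard part'' has already been done. The only care needed is to confirm that the hypotheses of that theorem are met piecewise (the geometric condition \eqref{eq:gcmc} is assumed on every regular piece, and by Remark~\ref{rem:decomp4th} it suffices that $U$ solve the Lam\'e system near $\Gamma$, which it does) and that the piecewise boundary identities patch together to give conditions on all of $\partial D$ in the sense appropriate to the function spaces involved — i.e. the trace identities hold on each open regular piece and therefore a.e. on $\partial D$. I would also double-check the sign conventions relating the Kupradze radiation condition \eqref{eq:radiation} to the Maxwell radiation condition written in \eqref{eq:dsp61} (the $+ik_s$ versus $-ik_s$), since the excerpt's \eqref{eq:radiation} and \eqref{eq:dsp61} use opposite-looking signs; this is a matter of the orientation in $\hat x\wedge(\cdot)$ and is not a real difficulty.
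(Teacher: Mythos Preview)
Your proposal is correct and follows essentially the same approach as the paper: both rely on Theorem~\ref{thm:fourthkind} for the boundary conditions (the only substantive step) and verify the remaining items---governing equations, incident/scattered splitting, radiation conditions---by direct calculation. The paper's proof is terser, invoking the pre-established decomposition \eqref{eq:decomp1}--\eqref{eq:radiations2} rather than re-deriving the Helmholtz equations by applying $\nabla\cdot$ and $\nabla\wedge$ to the Lam\'e equation as you do, but the content is the same.
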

\begin{proof}
By using the decompositions in \eqref{eq:decomp1} and \eqref{eq:decompt}, it is straightforward to show that
\begin{equation}\label{eq:aaa1}
-\nabla\cdot U=-\nabla\cdot U_p\quad\mbox{and}\quad \nabla\wedge U=\nabla\wedge U_s\quad\mbox{in}\ \ \mathbb{R}^3\backslash\overline{D}.
\end{equation}
Hence, by noting \eqref{eq:decompp} and \eqref{eq:decomps}, both $v_p$ in \eqref{eq:dsp21} and $E_s$ in \eqref{eq:dsp5half11} satisfy the vectorial Helmholtz equation. It is also noted that the Maxwell equations in \eqref{eq:dsp61} for $(E_s, H_s)$ is equivalent to that $E_s$ satisfies the vectorial Helmholtz equation (cf. \cite{CK12}). The boundary conditions in \eqref{eq:dsp21} and \eqref{eq:dsp61}, respectively, for $v_p$ and $E_s$, are immediate consequences of Theorem~\ref{thm:fourthkind}. Finally, the radiation conditions can be verified by direct calculations. 

The proof is complete. 
\end{proof}

\eqref{eq:dsp3} is known as the Helmholtz system the describes the time-harmonic acoustic scattering problem, where $D$ denotes a \emph{sound-soft} scatterer and, $v_p^{in}$, $v_p^{sc}$ and $v_p$ are respectively called the incident wave, scattered wave and total wave fields. \eqref{eq:dsp61} is known as the Maxwell system that describes the time-harmonic electromagnetic scattering problem, where $D$ represents a \emph{perfect magnetic conducting} (PMC) scatterer, and $E_s^{in}$, $E_s^{sc}$, $E_s$ and $H_s$ are respectively the incident electric wave, the scattered electric wave, the total electric wave and the total magnetic wave fields (see, e.g., \cite{CK12}).
In what follows, we shall write $D=[D,\mathrm{SS}]$ to specify that $D$ is sound-soft scatterer corresponding to the Helmholtz system \eqref{eq:dsp3}; whereas $D=[D,\mathrm{PMC}]$ means that $D$ is a PMC scatterer associated with the EM scattering problem \eqref{eq:dsp61}.

It is immediately observed from the definitions \eqref{eq:dsp21} and \eqref{eq:decompt} (see also \eqref{eq:aaa1}) that 
\begin{equation}\label{eq:relap}
U_p(U^{in}; [D,\mathrm{IV}])=\frac{1}{k_p^2}\nabla v_p([D,\mathrm{IV}]),
\end{equation}
and similarly,
\begin{equation}\label{eq:relas}
U_s(U^{in}; [D,\mathrm{IV}])=\frac{1}{k_s^2}\nabla\wedge E_s([D,\mathrm{IV}]).
\end{equation}
In what follows, we shall write
\begin{equation}\label{eq:ind11}
\begin{split}
-\nabla\cdot U(U^{in}; [D,\mathrm{IV}])=& v_p(v_p^{in}; [D,\mathrm{SS}]),\\
U_p(U^{in}; [D,\mathrm{IV}])=& \frac{1}{k_p^2}\nabla v_p(v_p^{in}; [D,\mathrm{SS}]),
\end{split}
\end{equation}
and
\begin{equation}\label{eq:ind2f1}
\begin{split}
\nabla\wedge U(U^{in}; [D,\mathrm{IV}])=& E_s(E_s^{in}; [D,\mathrm{PMC}]),\\
U_s(U^{in}; [D,\mathrm{IV}])=& \frac{1}{k_s^2}\nabla\wedge E_s(E_s^{in}; [D,\mathrm{PMC}]),
\end{split}
\end{equation}
to indicate the correspondences between the solution to the elastic scattering problem and those to the acoustic and the electromagnetic scattering problems that are stated in Proposition~\ref{prop:decom}.

Due to the symmetry role of the electric and magnetic fields in the Maxwell system \eqref{eq:dsp61}, as an alternative to \eqref{eq:dsp5half11}, one can also set
\begin{equation}\label{eq:dsp51}
\begin{split}
& \widetilde{E}_s=\widetilde{E}_s([D,\mathrm{IV}]):=H_s([D,\mathrm{IV}]),\ \widetilde{E}_s^{sc}([D,\mathrm{IV}]):=\widetilde{E}_s-\widetilde{E}_s^{in},\\
& \widetilde{H}_s=\widetilde{H}_s([D,\mathrm{IV}]):=-\frac{i}{k_s}\nabla\wedge \widetilde{E}_s([D,\mathrm{IV}]),
\end{split}
\end{equation}
with
\begin{equation}\label{eq:dsp52}
\widetilde{E}_s^{in}:=-\frac{i}{k_s}\nabla\wedge\left( \nabla\wedge  U^{in}\right) =-i\alpha_s k_s\, d^\perp e^{ik_sd\cdot x}.
\end{equation}
This would lead to a similar Maxwell system to \eqref{eq:dsp61} for $(\widetilde{E}_s,\widetilde{H}_s)$, but with the homogeneous boundary condition on $\partial D$ replaced by
\begin{equation}\label{eq:dsp5half21}
\nu\wedge \widetilde{E}_s=0\ \ \mbox{on}\ \ \partial D.
\end{equation}
In this case, $[D,\mathrm{IV}]$ is referred to as a {\it perfect electric conducting} (PEC) obstacle, and we shall write $[D,\mathrm{PEC}]$ when it is employed in the EM problem \eqref{eq:dsp61} with the boundary condition replaced by \eqref{eq:dsp5half21}. Furthermore, in such a case, \eqref{eq:ind2f1} should be modified to be
\begin{equation}\label{eq:ind21}
U_s(U^{in}; [D,\mathrm{IV}]) =\frac{i} {k_s} E_s(\widetilde{E}_s^{in}; [D,\mathrm{PEC}]).
\end{equation}

Clearly, according to Proposition~\ref{prop:decom}, if a fourth kind scatterer $D$ satisfies the geometric condition \eqref{eq:gcmc}, then the corresponding elastic scattering system can be decoupled into the two scattering systems \eqref{eq:dsp3} and \eqref{eq:dsp61}, which involves only the pressure and shear waves, respectively, through the relations \eqref{eq:relap} and \eqref{eq:relas}; see Theorem~\ref{thm:3} in Section~\ref{sect:4} for a summarizing conclusion. This fact would be of significant importance when considering the inverse problem of determining the scatterer $D$ from the corresponding elastic far-field measurements. Indeed, it can be seen that the information of the scatterer is also implicitly contained in the two simpler systems \eqref{eq:dsp3} and \eqref{eq:dsp61}. We shall make extensive use of such a decoupling idea for our study in Section~\ref{sect:4} for the inverse elastic scattering problems.

Next, we show that the geometric condition \eqref{eq:gc1} on the scatterer $[D,\mathrm{IV}]$ is also ``generically" necessary for the decoupling of the pressure and shear waves. Indeed, we have that

\begin{thm}\label{thm:fourthkind2}
Let $D$ be a fourth kind scatterer such that
\begin{equation}\label{eq:gcmc2}
H(x)\neq 0\quad\mbox{for a.e. }\ x\in\partial D.
\end{equation}
Let $U$ be a solution to the Lam\'e system \eqref{eq:lames}--\eqref{eq:radiation}. Suppose further that
\begin{equation}\label{eq:gc1n2}
U(\cdot; U^{in}, [D, \mathrm{IV}])\equiv\hspace*{-3.5mm}\backslash\ U(\cdot; U^{in}, [D, \mathrm{I}])\quad\mbox{in}\ \ \mathbb{R}^3\backslash\overline{D}.
\end{equation}
Then \eqref{eq:fourthkindga} and \eqref{eq:decompps2} cannot hold true simultaneously on $\partial D$.
\end{thm}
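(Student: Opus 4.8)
The plan is to argue by contradiction: assume that both \eqref{eq:fourthkindga} and \eqref{eq:decompps2} hold on $\partial D$, and show that under \eqref{eq:gcmc2} this forces $U$ to coincide with the first kind solution $U(\cdot; U^{in}, [D, \mathrm{I}])$, contradicting \eqref{eq:gc1n2}.

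The starting point is to revisit the derivation of \eqref{eq:p5} in the proof of Theorem~\ref{thm:fourthkind}. The key observation is that \eqref{eq:p5} was obtained using \emph{only} the condition $\nu\wedge U=0$ together with the local $H^3$-regularity of $U$ near a regular piece $\Gamma\subset\partial D$; the minimal surface condition \eqref{eq:gcmc} was not invoked to reach it. Hence, keeping the curvature term, on each regular piece $\Gamma$ we have
\[
\nu\cdot\partial_\nu U=\nabla\cdot U-(\nu\cdot U)\sum_{j=1}^3\big[\mathrm{Grad}_\Gamma\nu_j\big]_j=\nabla\cdot U+2H\,(\nu\cdot U),
\]
where the last equality uses $\sum_{j=1}^3[\mathrm{Grad}_\Gamma\nu_j]_j=\mathrm{tr}(\mathcal{D}[\nu])=\nabla\cdot\nu=-2H$ on $\Gamma$.

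Next I would feed in the two pieces of information being assumed. The first condition in \eqref{eq:decompps2} gives $\nabla\cdot U=0$ on $\partial D$; combining this with $\nu\cdot TU=0$ from \eqref{eq:fourthkindga} and the expansion $\nu\cdot TU=2\mu\,\nu\cdot\partial_\nu U+\lambda\,\nabla\cdot U$ used in \eqref{eq:p6}, we obtain $\nu\cdot\partial_\nu U=0$ on $\partial D$. Substituting $\nabla\cdot U=0$ and $\nu\cdot\partial_\nu U=0$ into the displayed identity yields $2H\,(\nu\cdot U)=0$ on each regular piece, hence $H\,(\nu\cdot U)=0$ a.e.\ on $\partial D$. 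Since $H\neq 0$ a.e.\ by \eqref{eq:gcmc2}, we conclude $\nu\cdot U=0$ a.e.\ on $\partial D$; combined with $\nu\wedge U=0$ this forces $U=0$ a.e.\ on $\partial D$, i.e.\ $U$ satisfies the first kind boundary condition \eqref{eq:firstkind}.

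To close the argument I would invoke the well-posedness of the exterior scattering problem: $U^{sc}=U-U^{in}$ solves the Lam\'e system \eqref{eq:lames2} in $\mathbb{R}^3\backslash\overline{D}$ subject to the Kupradze radiation condition \eqref{eq:radiation} and the Dirichlet data $U^{sc}=-U^{in}$ on $\partial D$, which by uniqueness for the first kind problem forces $U^{sc}\equiv U^{sc}(\cdot; U^{in}, [D, \mathrm{I}])$, and hence $U\equiv U(\cdot; U^{in}, [D, \mathrm{I}])$ in $\mathbb{R}^3\backslash\overline{D}$ --- contradicting \eqref{eq:gc1n2}. I do not expect any essential obstacle here; the only care needed is in reusing \eqref{eq:p5} without the minimal surface hypothesis, getting the sign in $\sum_j[\mathrm{Grad}_\Gamma\nu_j]_j=-2H$ correct, and passing from the pointwise/a.e.\ statement $U=0$ on $\partial D$ to the $H^{1/2}$-trace statement required to apply uniqueness for the first kind scattering problem.
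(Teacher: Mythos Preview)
Your proof is correct and follows essentially the same route as the paper's own argument: contradiction, reuse of the identity \eqref{eq:p5} (which indeed relies only on $\nu\wedge U=0$), combination with $\nabla\cdot U=0$ and $\nu\cdot TU=0$ to force $\nu\cdot U=0$ a.e.\ via $H\neq 0$, and then the uniqueness of the first kind scattering problem. The only difference is cosmetic: you make the curvature sign $\sum_j[\mathrm{Grad}_\Gamma\nu_j]_j=-2H$ explicit and spell out the well-posedness/trace step, both of which the paper leaves implicit.
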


\begin{proof}

We prove the theorem by an absurdity argument. Assume contrarily that both \eqref{eq:fourthkindga} and \eqref{eq:decompps2} hold on $\partial D$ for the scattering system \eqref{eq:lames}--\eqref{eq:radiation}. First, by using \eqref{eq:tractionoperator}, one has by a direct calculation that
\begin{equation}\label{eq:pp1}
\nu\cdot TU=2\mu\nu\cdot\partial_\nu U+\lambda\nabla\cdot U.
\end{equation}
Then, by using \eqref{eq:decompps2} on $\partial D$ and the fact that $\nabla\cdot U_s\equiv 0$, one has
\begin{equation}\label{eq:pp2}
\nabla\cdot U=\nabla\cdot(U_p+U_s)=\nabla\cdot U_p=0\quad\mbox{on}\ \ \partial D.
\end{equation}
Inserting \eqref{eq:pp2} into \eqref{eq:pp1}, and using \eqref{eq:fourthkindga} on $\partial D$ that $\nu\cdot TU=0$, one readily has
\begin{equation}\label{eq:pp3}
\nu\cdot\partial_\nu U=0\quad\mbox{on}\ \ \partial D.
\end{equation}

Next, by using \eqref{eq:fourthkindga} on $\partial D$ that $\nu\wedge U=0$, and a completely similar argument to the first part of the proof of Theorem~\ref{thm:fourthkind}, one can show that \eqref{eq:p5} also holds on $\partial D$, namely,
\begin{equation}\label{eq:pp4}
\nu\cdot\partial_\nu U=\nabla\cdot U-(\nu\cdot U)\sum_{j=1}^3\Big[\mathrm{Grad}_\Gamma \nu_j\Big]_j\quad\mbox{on}\ \ \partial D.
\end{equation}
Clearly, \eqref{eq:gcmc2} implies that
\begin{equation}\label{eq:gc1n1}
\mathrm{tr}(\mathcal{D}[\nu](x))\neq 0\quad\mbox{for a.e.}\ x\in\partial D,
\end{equation}
where $\mathrm{tr}(\mathcal{D}[\nu])$ is defined in \eqref{eq:gc1}.
Combining \eqref{eq:gc1n1}, \eqref{eq:pp2} and \eqref{eq:pp3} and \eqref{eq:pp4}, one can readily verify that
\begin{equation}\label{eq:pp5}
\nu\cdot U=0\quad\mbox{on}\ \ \partial D,
\end{equation}
which together with the fact again that $\nu\wedge U=0$ one $\partial D$ immediately implies that
\[
U=0\quad\mbox{on}\ \ \partial D.
\]
That is, $D$ is also a first kind scatterer and hence $U(\cdot; U^{in}, [D, \mathrm{IV}])=U(\cdot; U^{in}, [D, \mathrm{I}])\ \mbox{in}\ \mathbb{R}^3\backslash\overline{D}$, which is a contradiction to \eqref{eq:gc1n2}.

The proof is complete.
\end{proof}

\begin{rem}\label{eq:generic1}
We would like to remark that physically speaking, \eqref{eq:gc1n2} means that two scatterers possessing the same shape $D$ but of different physical properties must produce different scattering effects due an incident plane wave $U^{in}$. Hence, the condition should be a generic one. This is also related to the inverse scattering problem of identifying the physical property of an unknown/inaccessible scatterer by using a single wave measurement due to the incident plane wave $U^{in}$.
\end{rem}

In Section~\ref{sect:3}, we shall consider the case that $D=B_R$, a central ball of radius $R\in\mathbb{R}_+$. In this case, one has that
\begin{equation}\label{eq:ball1}
H(x)\equiv -\frac{1}{R}.
\end{equation}
Hence, the condition \eqref{eq:gcmc2} holds true. By using the method of wave series expansion, we show that \eqref{eq:fourthkindga} and \eqref{eq:decompps2} cannot hold true simultaneously on $\partial D$. This reinforce our conclusion that the geometric condition \eqref{eq:gc1} is both sufficient and ``generically" necessary for the decoupling of the pressure and shear waves of the elastic wave scattering.

\subsection{The third kind boundary condition}

In this section, we treat the elastic scattering problem associated with the third kind boundary condition. Our argument shall follow a similar spirit to the fourth kind boundary condition case in Section~\ref{sect:2.1}, but would become a bit more complicated.

First, we have

\begin{thm}\label{thm:thirdkind}
Let $\Gamma\subset\partial D$ be a regular piece of the boundary of the scatterer $D$. Suppose that there hold
\begin{equation}\label{eq:gc32}
H(x)=0\quad\mbox{and}\quad K(x)=0\quad\mbox{for}\ \ x\in\Gamma.
\end{equation}
Then for $U=(U_j)_{j=1}^3$ a solution to the Lam\'e system \eqref{eq:lames}--\eqref{eq:radiation} satisfying the third kind boundary condition on $\Gamma$,
\begin{equation}\label{eq:thirdkindga}
\nu\cdot U=0\qquad\mbox{and}\qquad \nu\wedge TU=0\qquad\mbox{on}\ \ \Gamma,
\end{equation}
if and only 
\begin{equation}\label{eq:decompps232}
\nu\cdot \left( \nabla\,(\nabla \cdot U)\right) =0\qquad\mbox{and}\qquad \nu\wedge(\nabla\wedge U)=0\qquad\mbox{on}\ \ \Gamma.
\end{equation}
\end{thm}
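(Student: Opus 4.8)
The plan is to mimic the proof of Theorem~\ref{thm:fourthkind}, but now exploiting \emph{both} curvature hypotheses in \eqref{eq:gc32}. The key algebraic fact to extract from the geometry is that $2H=-\nabla\cdot\nu=-\mathrm{tr}(\mathcal{D}[\nu])$ and, in addition, $K$ equals the ``second elementary symmetric function'' of $\mathcal{D}[\nu]$, so that $H(x)=K(x)=0$ forces the shape operator $\mathcal{D}[\nu](x)$ to be nilpotent at every $x\in\Gamma$ (its eigenvalues are $\kappa_1,\kappa_2,0$, all zero). Equivalently, along $\Gamma$ the surface is flat to second order and $\mathrm{Grad}_\Gamma\nu_l$ vanishes identically — indeed a connected regular piece with $\kappa_1=\kappa_2=0$ is a piece of a plane, so $\nu$ is constant on $\Gamma$ and every $\mathrm{Grad}_\Gamma\nu_l\equiv 0$. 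This is the crucial simplification: all the curvature terms $U_j\,\mathrm{Grad}_\Gamma\nu_l$ and $\sum_l[\mathrm{Grad}_\Gamma\nu_l]_l$ that appeared in \eqref{eq:p1}--\eqref{eq:p5} now drop out on $\Gamma$.

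For the forward direction, suppose \eqref{eq:thirdkindga} holds. First I would compute $\nu\wedge TU$ using \eqref{eq:tractionoperator}: $\nu\wedge TU=2\mu\,\nu\wedge\partial_\nu U+\mu\,\nu\wedge(\nu\wedge(\nabla\wedge U))=0$, while $\lambda\,\nu\wedge(\nu\nabla\cdot U)=0$ automatically. Since $\nu\cdot U=0$ on $\Gamma$ and $\mathrm{Grad}_\Gamma\nu=0$, differentiating $\nu\cdot U=0$ tangentially gives $\nu\cdot\mathrm{Grad}_\Gamma U_j$-type identities that pin down the tangential derivatives of $\nu\cdot U$; combined with the Helmholtz/Lam\'e equation this should yield control of $\nu\cdot\partial_\nu$ of the relevant quantities. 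The cleanest route is the identity analogous to \eqref{eq:p8}: write $\tfrac{1}{k_s^2}\nu\wedge(\nabla\wedge(\nabla\wedge U))=\nu\wedge U+\tfrac{1}{k_p^2}\nu\wedge(\nabla(\nabla\cdot U))$ and its ``$\nu\cdot$'' counterpart $\tfrac{1}{k_p^2}\nu\cdot(\nabla(\nabla\cdot U))=-\nu\cdot U-\tfrac{1}{k_s^2}\nu\cdot(\nabla\wedge(\nabla\wedge U))$, then turn the two conditions of \eqref{eq:thirdkindga} into statements about $\nu\cdot(\nabla(\nabla\cdot U))$ and $\nu\wedge(\nabla\wedge U)$ directly. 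The computation of $\nu\wedge TU$ in terms of $\nu\wedge(\nabla\wedge U)$ and tangential data, using the flatness of $\Gamma$ to discard curvature corrections, is the place where I expect to spend the most care — one must be sure that after using $\nu\cdot U=0$ and $\mathrm{Grad}_\Gamma\nu=0$, the leftover boundary terms genuinely reduce to the two quantities in \eqref{eq:decompps232} and nothing else.

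For the converse, assume \eqref{eq:decompps232}. Using the two identities just mentioned (with $\mathrm{Grad}_\Gamma\nu=0$), the vanishing of $\nu\cdot(\nabla(\nabla\cdot U))$ together with an argument like \eqref{eq:p5} should give $\nu\cdot U=0$ on $\Gamma$, and then $\nu\wedge(\nabla\wedge U)=0$ should give back $\nu\wedge TU=0$ after re-expressing $\nu\wedge TU$ via $\nu\wedge\partial_\nu U$ and $\nu\wedge(\nabla\wedge U)$ and using $\nabla\cdot U_s=0$, $\nabla\wedge U_p=0$ from \eqref{eq:decompp}--\eqref{eq:decomps}. The main obstacle, as in the forward direction, is bookkeeping: the third-kind traction condition mixes normal and tangential components in a less symmetric way than the fourth-kind case, so I would carefully split every vector into its $\nu$-component and its tangential part and track each piece, invoking $H=K=0\Rightarrow\mathrm{Grad}_\Gamma\nu_l\equiv 0$ at each step to kill curvature terms. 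Once those terms are gone the identities are essentially linear-algebraic and the equivalence falls out as in Theorem~\ref{thm:fourthkind}.
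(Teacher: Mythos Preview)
Your proposal is correct and follows essentially the same route as the paper: exploit $H=K=0\Rightarrow\nu$ constant on $\Gamma$ to kill all $\mathrm{Grad}_\Gamma\nu_l$ terms, differentiate $\nu\cdot U=0$ tangentially, and combine with the traction formula and the Helmholtz decomposition of $U$. The paper streamlines the ``bookkeeping'' you anticipate by introducing $\mathscr{D}_\nu U:=[\nu\cdot\partial_j U]_{j=1}^3$ together with the identity $\mathscr{D}_\nu U=\partial_\nu U+\nu\wedge(\nabla\wedge U)$, and by invoking the surface--divergence identity $\nu\cdot(\nabla\wedge V)=\mathrm{Div}_\Gamma(\nu\wedge V)$ to pass from $\nu\wedge(\nabla\wedge U)=0$ to $\nu\cdot(\nabla\wedge(\nabla\wedge U))=0$; these two devices are precisely what make your tangential-differentiation step and your ``$\nu\cdot$ counterpart'' identity go through without further case analysis.
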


\begin{rem}
The geometric condition \eqref{eq:gc32} actually implies that $\Gamma$ is flat, namely $\nu\equiv \mbox{constant}$ on $\Gamma$. Stating it in the form \eqref{eq:gc32} would give more insights about the geometric condition, especially when treating its``necessity" in a certain sense; see \eqref{eq:gc1n131} in Theorem~\ref{thm:thirdkind2}.
\end{rem}

\begin{proof}
We first show that \eqref{eq:thirdkindga} implies \eqref{eq:decompps232}. By using $\nu\cdot U=0$ in \eqref{eq:thirdkindga} on $\Gamma$, one has by straightforward calculations that
\begin{equation}\label{eq:pp31}
\begin{split}
0=&\nu\wedge\mathrm{Grad}_\Gamma(\nu\cdot U)\\
=&\nu\wedge\sum_{j=1}^3 \nu_j\nabla U_j+\nu\wedge\sum_{j=1}^3 U_j\mathrm{Grad}_\Gamma\nu_j\\
=&\nu\wedge\mathscr{D}_\nu U\quad\mbox{on}\quad \Gamma,
\end{split}
\end{equation}
where the differential operator $\mathscr{D}_\nu$ is defined by
\begin{equation}\label{eq:pp32}
\mathscr{D}_\nu U:=\big[\nu\cdot\partial_j U \big]_{j=1}^3\in\mathbb{C}^3.
\end{equation}
On the other hand, we have
\begin{equation}\label{eq:pp37}
\mathscr{D}_\nu U=\partial_\nu U+\nu\wedge(\nabla\wedge U).
\end{equation}
Hence, by using $\nu\wedge TU=0$\ on $\Gamma$ in \eqref{eq:thirdkindga} and \eqref{eq:pp37}, one further has
\begin{equation}\label{eq:pp38}
0=\nu\wedge TU=2\mu\nu\wedge\mathscr{D}_\nu U-\mu\nu\wedge(\nu\wedge (\nabla\wedge U))\ \ \mbox{on}\ \Gamma,
\end{equation}
which together with \eqref{eq:pp31} immediately implies that
\begin{equation}\label{eq:pp39}
\nu\wedge(\nabla\wedge U)=0\quad\mbox{on}\ \ \Gamma.
\end{equation}
Furthermore, one has
\begin{equation}\label{eq:pp41}
\nu\cdot\left( \nabla\wedge(\nabla\wedge U)\right) =\mathrm{Div}_\Gamma\big(\nu\wedge (\nabla\wedge U)\big)=0\quad\mbox{on} \ \ \Gamma,
\end{equation}
where $\mathrm{Div}_\Gamma$ denotes the surface divergence operator on $\Gamma$ (cf. \cite[Section 3]{CK12}). Thus, one has by \eqref{eq:thirdkindga} and \eqref{eq:pp41} that
\begin{equation}\label{eq:pp42}
\frac{1}{k_p^2}\nu\cdot \left( \nabla\,(\nabla \cdot U)\right)=\nu\cdot U-\frac{1}{k_s^2}\nu\cdot\nabla\wedge(\nabla\wedge U)=0\quad\mbox{on}\ \ \Gamma.
\end{equation}

Next, we show that \eqref{eq:decompps232} implies \eqref{eq:thirdkindga}. First, by using the condition $\nu\wedge (\nabla\wedge U)=0$ on $\Gamma$ in \eqref{eq:decompps232}, similar to \eqref{eq:pp41}, one readily has
\begin{equation}\label{eq:aaa2}
\nu\cdot(\nabla\wedge(\nabla\wedge U))=0\quad\mbox{on}\ \ \Gamma.
\end{equation}
Then using the first relation in \eqref{eq:pp42}, i.e.,
\begin{equation}\label{eq:aaa3}
\frac{1}{k_p^2}\nu\cdot \left( \nabla\,(\nabla \cdot U)\right)=\nu\cdot U-\frac{1}{k_s^2}\nu\cdot\nabla\wedge(\nabla\wedge U),
\end{equation}
and the fact that $\nu\cdot \left( \nabla\,(\nabla \cdot U)\right) =0$ on $\Gamma$ in \eqref{eq:decompps232}, one readily has $\nu\cdot U=0$. Hence, there holds that
\begin{equation}\label{eq:aaa4}
\nu\wedge\mathscr{D}_\nu U=\nu\wedge\mathrm{Grad}_\Gamma(\nu\cdot U)=0\quad\mbox{on}\ \ \Gamma.
\end{equation}
Finally, by combining \eqref{eq:pp37}, \eqref{eq:decompps232} and \eqref{eq:aaa4}, one can conclude that
\begin{equation}
\nu\wedge TU =2\mu\nu\wedge\mathscr{D}_\nu U-\mu\nu\wedge(\nu\wedge (\nabla\wedge U)) =0 \quad\mbox{on}\ \ \Gamma.
\end{equation}

The proof is complete.
\end{proof}

In a similar manner to Proposition~\ref{prop:decom}, by applying Theorem~\ref{thm:thirdkind}, we have the following result.

\begin{prop}\label{prop:decom3}
Let $D=[D,\mathrm{III}]$ be a third kind elastic scatterer with a piecewise flat boundary; that is, the geometric condition \eqref{eq:gc32} is fulfilled. Let $U([D,\mathrm{III}])=U(\cdot; U^{in}, [D, \mathrm{III}])$ with the incident field $U^{in}$ given by \eqref{eq:planewave}. 
	
Set
\begin{equation}\label{eq:dsp212}
	v_p=v_p([D,\mathrm{III}]):=-\nabla\cdot U([D,\mathrm{III}]),\quad v_p^{sc}:=-\nabla\cdot U^{sc}([D,\mathrm{III}]).
\end{equation}
Then $v_p$ is a solution to the following problem,
\begin{equation}\label{eq:dsp332}
\begin{cases}
& (\Delta+k_p^2) v_p=0\qquad\mbox{in}\ \ \mathbb{R}^3\backslash\overline{D},\medskip\\
& v_p=v_p^{in}+v_p^{sc}\qquad \mbox{in}\ \ \mathbb{R}^3\backslash\overline{D},\medskip\\
& \nu\cdot\nabla v_p=0\qquad\mbox{on}\ \ \partial D,\medskip\\
& \displaystyle{\frac{x}{\|x\|}\cdot(\nabla v_p^{sc})-ik_p v_p^{sc}=\mathcal{O}(\|x\|^{-2})}\ \mbox{as}\ \|x\|\rightarrow+\infty,
\end{cases}
\end{equation}
with $v_p^{in}$ given by \eqref{eq:dsp22}.
	
In parallel, if we set
\begin{equation}\label{eq:dsp5half112}
	\begin{split}
		& E_s=E_s([D,\mathrm{III}]):=\nabla\wedge U([D,\mathrm{III}]),\  E_s^{sc}:=\nabla\wedge U^{sc}([D,\mathrm{III}]),\\
		& H_s=H_s([D,\mathrm{III}]):=\nabla\wedge E_s/(ik_s), 
	\end{split}
\end{equation}
then $(E_s,H_s)$ solves the following problem,
\begin{equation}\label{eq:dsp612}
	\begin{cases}
		& \nabla\wedge E_s-ik_s H_s=0\qquad \mbox{in}\ \ \mathbb{R}^3\backslash\overline{D},\medskip\\
		& \nabla\wedge H_s+ik_s E_s=0\qquad \mbox{in}\ \ \mathbb{R}^3\backslash\overline{D},\medskip\\
		& E_s=E_s^{in}+E_s^{sc}\qquad \mbox{in}\ \ \mathbb{R}^3\backslash\overline{D},\medskip\\
		& \nu\wedge E_s =0\qquad \mbox{on}\ \ \partial D,\medskip\\
		& \displaystyle{\frac{x}{\|x\|}\wedge (\nabla\wedge E^{sc}_s) + ik_sE^{sc}_s =\mathcal{O}(\|x\|^{-2})}\ \mbox{as}\ \|x\|\rightarrow+\infty, 
	\end{cases}
\end{equation}
with $E_s^{in}$ given by \eqref{eq:dsp5half12}.
\end{prop}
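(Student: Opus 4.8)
The plan is to mirror, essentially verbatim, the proof of Proposition~\ref{prop:decom}, now invoking Theorem~\ref{thm:thirdkind} in place of Theorem~\ref{thm:fourthkind}. First I would recall the decompositions \eqref{eq:decomp1} and \eqref{eq:decompt}, which give the identities \eqref{eq:aaa1}, namely $-\nabla\cdot U=-\nabla\cdot U_p$ and $\nabla\wedge U=\nabla\wedge U_s$ in $\mathbb{R}^3\backslash\overline{D}$. Combined with \eqref{eq:decompp}--\eqref{eq:decomps}, these show at once that $v_p=-\nabla\cdot U([D,\mathrm{III}])$ solves the scalar Helmholtz equation $(\Delta+k_p^2)v_p=0$ and that $E_s=\nabla\wedge U([D,\mathrm{III}])$ solves the vectorial Helmholtz equation, hence $(E_s,H_s)$ with $H_s=\nabla\wedge E_s/(ik_s)$ solves the Maxwell pair in \eqref{eq:dsp612} (the equivalence of a Maxwell field with a divergence-free vectorial Helmholtz solution being standard, cf.\ \cite{CK12}). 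The decomposition of the incident field \eqref{eq:planewave} into $U_p^{in}$ and $U_s^{in}$ accounts for $v_p^{in}$ as in \eqref{eq:dsp22} and $E_s^{in}$ as in \eqref{eq:dsp5half12}, and subtraction gives the stated splittings $v_p=v_p^{in}+v_p^{sc}$ and $E_s=E_s^{in}+E_s^{sc}$.

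Next I would translate the boundary conditions. By Theorem~\ref{thm:thirdkind}, the third kind condition \eqref{eq:thirdkindga} on $\partial D$ (whose regular pieces all satisfy \eqref{eq:gc32}) is equivalent to \eqref{eq:decompps232}, that is, $\nu\cdot(\nabla(\nabla\cdot U))=0$ and $\nu\wedge(\nabla\wedge U)=0$ on $\partial D$. The first of these reads $\nu\cdot\nabla v_p=0$ after unwinding the definition \eqref{eq:dsp212} (up to the harmless sign), which is precisely the Neumann condition in \eqref{eq:dsp332}; physically this makes $[D,\mathrm{III}]$ behave as a sound-hard scatterer for the pressure part. The second, $\nu\wedge(\nabla\wedge U)=0$, is exactly $\nu\wedge E_s=0$ on $\partial D$ by \eqref{eq:dsp5half112}, the perfect electric conducting boundary condition appearing in \eqref{eq:dsp612}. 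So both boundary conditions are immediate consequences of Theorem~\ref{thm:thirdkind}, with no further geometric input needed beyond \eqref{eq:gc32}.

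Finally I would check the radiation conditions. From the Kupradze conditions \eqref{eq:radiation}, or equivalently the Sommerfeld conditions \eqref{eq:radiations1}--\eqref{eq:radiations2} satisfied by $U_p^{sc}$ and $U_s^{sc}$, a direct computation shows that $v_p^{sc}=-\nabla\cdot U^{sc}$ satisfies the scalar outgoing condition $\hat x\cdot\nabla v_p^{sc}-ik_p v_p^{sc}=\mathcal{O}(\|x\|^{-2})$, and that $E_s^{sc}=\nabla\wedge U^{sc}$ together with $H_s^{sc}$ satisfies the Silver--M\"uller condition $\hat x\wedge(\nabla\wedge E_s^{sc})+ik_s E_s^{sc}=\mathcal{O}(\|x\|^{-2})$; these are exactly the last lines of \eqref{eq:dsp332} and \eqref{eq:dsp612}. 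This is wholly parallel to Proposition~\ref{prop:decom} and presents no new difficulty — indeed, the only substantive difference from the fourth-kind case is that the equivalence \eqref{eq:decompps232} supplied by Theorem~\ref{thm:thirdkind} produces a \emph{Neumann} (sound-hard) condition for $v_p$ and a PEC condition for $E_s$, rather than the Dirichlet (sound-soft) and PMC conditions of Proposition~\ref{prop:decom}. The main (mild) obstacle is merely bookkeeping the sign and normalization conventions in \eqref{eq:dsp212}, \eqref{eq:dsp5half112}, \eqref{eq:dsp22} and \eqref{eq:dsp5half12} so that the boundary and radiation conditions come out in precisely the stated form; there is no analytic hard point.

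\begin{proof}
By the decompositions \eqref{eq:decomp1} and \eqref{eq:decompt}, one has, exactly as in \eqref{eq:aaa1},
\begin{equation}\label{eq:aaa1three}
-\nabla\cdot U=-\nabla\cdot U_p\quad\mbox{and}\quad \nabla\wedge U=\nabla\wedge U_s\quad\mbox{in}\ \ \mathbb{R}^3\backslash\overline{D}.
\end{equation}
Hence, in view of \eqref{eq:decompp} and \eqref{eq:decomps}, both $v_p$ in \eqref{eq:dsp212} and $E_s$ in \eqref{eq:dsp5half112} satisfy the (scalar, respectively vectorial) Helmholtz equation; as in Proposition~\ref{prop:decom}, the Maxwell system in \eqref{eq:dsp612} for $(E_s,H_s)$ is equivalent to $E_s$ satisfying the vectorial Helmholtz equation (cf.\ \cite{CK12}). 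The splittings $v_p=v_p^{in}+v_p^{sc}$ and $E_s=E_s^{in}+E_s^{sc}$ follow from the corresponding decomposition of $U^{in}$ in \eqref{eq:planewave}, with $v_p^{in}$ as in \eqref{eq:dsp22} and $E_s^{in}$ as in \eqref{eq:dsp5half12}.

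Since each regular piece of $\partial D$ satisfies \eqref{eq:gc32}, Theorem~\ref{thm:thirdkind} applies and shows that \eqref{eq:thirdkindga} is equivalent to \eqref{eq:decompps232}. The relation $\nu\cdot(\nabla(\nabla\cdot U))=0$ on $\partial D$ is precisely $\nu\cdot\nabla v_p=0$ on $\partial D$ by \eqref{eq:dsp212}, giving the boundary condition in \eqref{eq:dsp332}; the relation $\nu\wedge(\nabla\wedge U)=0$ on $\partial D$ is precisely $\nu\wedge E_s=0$ on $\partial D$ by \eqref{eq:dsp5half112}, giving the boundary condition in \eqref{eq:dsp612}.

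Finally, using the Sommerfeld radiation conditions \eqref{eq:radiations1}--\eqref{eq:radiations2} satisfied by $U_p^{sc}$ and $U_s^{sc}$, a direct calculation shows that $v_p^{sc}=-\nabla\cdot U^{sc}$ satisfies the scalar outgoing condition in \eqref{eq:dsp332} and that $(E_s^{sc},H_s^{sc})$ with $E_s^{sc}=\nabla\wedge U^{sc}$ satisfies the Silver--M\"uller condition in \eqref{eq:dsp612}. The proof is complete.
\end{proof}
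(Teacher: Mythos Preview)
Your proposal is correct and follows exactly the approach the paper indicates: the paper itself does not write out a separate proof for this proposition, merely stating that it is obtained ``in a similar manner to Proposition~\ref{prop:decom}, by applying Theorem~\ref{thm:thirdkind}.'' Your write-up is a faithful and complete unfolding of that sentence, with the expected substitutions (Neumann/sound-hard for $v_p$, PEC for $E_s$) coming directly from \eqref{eq:decompps232}.
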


The Helmholtz system \eqref{eq:dsp332} is known as the scattering for acoustic waves from a \emph{sound-hard} scatterer. Hence, $[D,\mathrm{III}]$ can be regarded as a sound-hard scatterer, which shall be denoted as $[D,\mathrm{SH}]$ in this situation. 
Moreover, in parallel to \eqref{eq:dsp61}, the Maxwell system \eqref{eq:dsp612} describes the scattering for electromagnetic waves by a PEC scatterer $[D,\mathrm{PEC}]=[D,\mathrm{III}]$. 

In what follows, similar to \eqref{eq:ind11} and \eqref{eq:ind2f1}, the correspondences between $U(\cdot; U^{in}, [D, \mathrm{III}])$ and the solutions to the problems \eqref{eq:dsp332} and \eqref{eq:dsp612} shall be written as
\begin{equation}\label{eq:ind112}
\begin{split}
-\nabla\cdot U(U^{in}; [D,\mathrm{III}])=& v_p(v_p^{in}; [D,\mathrm{SH}]),\\
U_p(U^{in}; [D,\mathrm{III}])=& \frac{1}{k_p^2}\nabla v_p(v_p^{in}; [D,\mathrm{SH}]),
\end{split}
\end{equation}
and
\begin{equation}\label{eq:ind2f12}
\begin{split}
\nabla\wedge U(U^{in}; [D,\mathrm{III}])=& E_s(E_s^{in}; [D,\mathrm{PEC}]),\\
U_s(U^{in}; [D,\mathrm{III}])=& \frac{1}{k_s^2}\nabla\wedge E_s(E_s^{in}; [D,\mathrm{PEC}]). 
\end{split}
\end{equation}

Next, we show that the geometric condition \eqref{eq:gc32} on the scatterer $[D,\mathrm{III}]$ is also  ``necessary" in a certain sense for the decoupling of the pressure and shear waves. In fact, we can show

\begin{thm}\label{thm:thirdkind2}
Let $D$ be a third kind scatterer. Suppose that there holds
\begin{equation}\label{eq:gc1n131}
K(x)\neq 0\quad\mbox{for a.e.}\ x\in\partial D.
\end{equation}
Let $U$ be a solution to the Lam\'e system \eqref{eq:lames}--\eqref{eq:radiation}. Suppose further that
\begin{equation}\label{eq:gc1n232}
U(\cdot; U^{in}, [D, \mathrm{III}])\equiv\hspace*{-3.5mm}\backslash\ U(\cdot; U^{in}, [D, \mathrm{I}])\quad\mbox{in}\ \mathbb{R}^3\backslash\overline{D}.
\end{equation}
Then \eqref{eq:thirdkindga} and \eqref{eq:decompps232} cannot hold true simultaneously on $\partial D$.
\end{thm}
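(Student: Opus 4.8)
The plan is to argue by contradiction, paralleling the proof of Theorem~\ref{thm:fourthkind2} but with the Gaussian curvature now playing the role of the mean curvature. Suppose, contrary to the assertion, that for the scattering solution $U$ both \eqref{eq:thirdkindga} and \eqref{eq:decompps232} hold on $\partial D$. I will deduce that $U\equiv 0$ on $\partial D$; this makes $[D,\mathrm{III}]$ a first kind scatterer as well, so that the unique solvability of the exterior scattering problem recalled in Section~\ref{sect:1} gives $U(\cdot;U^{in},[D,\mathrm{III}])\equiv U(\cdot;U^{in},[D,\mathrm{I}])$ in $\mathbb R^3\backslash\overline D$, contradicting \eqref{eq:gc1n232}.

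The first steps reproduce the ``only if'' part of the proof of Theorem~\ref{thm:thirdkind}, but \emph{without} ever invoking the flatness hypothesis \eqref{eq:gc32}. From $\nu\wedge(\nabla\wedge U)=0$ in \eqref{eq:decompps232} and $\nu\wedge TU=0$ in \eqref{eq:thirdkindga}, identity \eqref{eq:pp38} gives $\nu\wedge\mathscr{D}_\nu U=0$ on $\partial D$ (the term $\nu\wedge(\nu\wedge(\nabla\wedge U))$ there vanishing since $\nu\wedge(\nabla\wedge U)=0$). Next, from $\nu\cdot U=0$ in \eqref{eq:thirdkindga} I expand $\mathrm{Grad}_\Gamma(\nu\cdot U)$ exactly as in \eqref{eq:p1} and \eqref{eq:pp31}, but now \emph{retaining} the term $\sum_l U_l\,\mathrm{Grad}_\Gamma\nu_l=\mathcal{D}[\nu]U$ that was discarded in Theorem~\ref{thm:thirdkind} because there $\nu$ is constant on $\Gamma$. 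This gives
\[
0=\nu\wedge\mathrm{Grad}_\Gamma(\nu\cdot U)=\nu\wedge\mathscr{D}_\nu U+\nu\wedge\bigl(\mathcal{D}[\nu]U\bigr)\qquad\mbox{on}\ \ \partial D ,
\]
with $\mathcal{D}[\nu]$ the matrix \eqref{eq:matrix1}. Combining it with $\nu\wedge\mathscr{D}_\nu U=0$ yields $\nu\wedge(\mathcal{D}[\nu]U)=0$ on $\partial D$. Since each column $\mathrm{Grad}_\Gamma\nu_l$ of $\mathcal{D}[\nu]$ is tangential, the vector $\mathcal{D}[\nu]U$ is tangent to $\partial D$, and $v\mapsto\nu\wedge v$ is injective on the tangent plane; hence $\mathcal{D}[\nu]U=0$ on $\partial D$.

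It remains to deduce $U=0$ from $\mathcal{D}[\nu]U=0$. The hypothesis $\nu\cdot U=0$ means $U$ is tangential, while a short computation (most transparent in principal coordinates, and consistent with $\mathcal{D}[\nu]\nu=\tfrac12\,\mathrm{Grad}_\Gamma(|\nu|^2)=0$ and the trace identity behind \eqref{eq:gc1}) shows that the restriction of $\mathcal{D}[\nu]$ to the tangent plane $T_x\partial D$ is, up to a sign, the Weingarten map of $\partial D$ at $x$: its eigenvalues are $-\kappa_1(x),-\kappa_2(x)$ and its determinant is $\kappa_1(x)\kappa_2(x)=K(x)$. Therefore $\mathcal{D}[\nu]|_{T_x\partial D}$ is invertible at every $x$ with $K(x)\neq0$, and hypothesis \eqref{eq:gc1n131} forces $U=0$ a.e.\ on $\partial D$; since $U$ is continuous up to each regular piece of $\partial D$, in fact $U\equiv 0$ on $\partial D$, which closes the contradiction.

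I expect the only genuinely new ingredient to be this last geometric identification, $\det\bigl(\mathcal{D}[\nu]|_{T_x\partial D}\bigr)=K(x)$, which is exactly what singles out the Gaussian curvature and explains why the sufficient condition \eqref{eq:gc32} must include $K=0$; the remainder is a repackaging of identities already assembled for Theorems~\ref{thm:fourthkind}, \ref{thm:fourthkind2} and \ref{thm:thirdkind}. As in Theorem~\ref{thm:fourthkind2}, the argument uses only part of the hypotheses: the condition $\nu\cdot(\nabla(\nabla\cdot U))=0$ in \eqref{eq:decompps232} does not enter.
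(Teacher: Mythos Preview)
Your argument is correct and follows essentially the same route as the paper's proof: both derive $\nu\wedge\mathscr{D}_\nu U=0$ from $\nu\wedge TU=0$ and $\nu\wedge(\nabla\wedge U)=0$, then combine it with the expansion of $\nu\wedge\mathrm{Grad}_\Gamma(\nu\cdot U)=0$ to obtain $\nu\wedge\bigl(\sum_j U_j\,\mathrm{Grad}_\Gamma\nu_j\bigr)=0$, and finally use the nonvanishing of the Gaussian curvature to force $U=0$. The only real difference is in how the last geometric step is presented: the paper computes $\nu\wedge\mathrm{Grad}_\Gamma\nu_j$ explicitly in a local parametrization to deduce $U\cdot\partial\nu/\partial u_l=0$ for $l=1,2$, and then shows $\partial\nu/\partial u_1\wedge\partial\nu/\partial u_2=K(x)\,\partial x/\partial u_1\wedge\partial x/\partial u_2$; you instead observe that $\mathcal{D}[\nu]U$ is tangential (so $\nu\wedge(\cdot)=0$ already gives $\mathcal{D}[\nu]U=0$) and identify $\mathcal{D}[\nu]\big|_{T_x\partial D}$ with minus the Weingarten map, whose determinant is $K(x)$. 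Your packaging is a bit cleaner and makes the role of $K$ more transparent, but the content is the same; your remark that the condition $\nu\cdot(\nabla(\nabla\cdot U))=0$ is never used also matches the paper.
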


\begin{proof}
We prove the theorem by an absurdity argument. Assume contrarily that both \eqref{eq:thirdkindga} and \eqref{eq:decompps232} hold on $\partial D$ for the scattering system \eqref{eq:lames}--\eqref{eq:radiation}. Then we have by from $\nu\wedge TU=0$ and $\nu\wedge(\nabla\wedge U)=\nu\wedge(\nabla\wedge U_s)=0$ on $\partial D$ that
\begin{equation}\label{eq:q1}
2\mu\nu\wedge \partial_\nu U=\nu\wedge TU-\mu\nu\wedge\big(\nu\wedge(\nabla\wedge U) \big)=0\quad\mbox{on}\ \ \partial D,
\end{equation}
which together with \eqref{eq:pp37} readily implies that
\begin{equation}\label{eq:q2}
\nu\wedge\mathscr{D}_\nu U=0\quad\mbox{on}\ \ \Gamma,
\end{equation}
where $\mathscr{D}_\nu$ is defined in \eqref{eq:pp32}. Next, by using $\nu\cdot U=0$ on $\partial D$ in
\eqref{eq:thirdkindga}, we have
\begin{equation}\label{eq:q3}
\begin{split}
0=&\nu\wedge\mathrm{Grad}_\Gamma(\nu\cdot U)\\
=& \nu\wedge\sum_{j=1}^3\nu_j\nabla U_j+\nu\wedge\sum_{j=1}^3 U_j
\mathrm{Grad}_\Gamma\nu_j\\
=& \nu\wedge\mathscr{D}_\nu U++\nu\wedge\sum_{j=1}^3 U_j
\mathrm{Grad}_\Gamma\nu_j,
\end{split}
\end{equation}
which together with \eqref{eq:q2} further yields that
\begin{equation}\label{eq:q4}
\nu\wedge\sum_{j=1}^3 U_j
\mathrm{Grad}_\Gamma\nu_j=0\quad\mbox{on}\ \ \partial D.
\end{equation}

Let $(g_{kl})_{k,l=1}^{2}$ be the first fundamental matrix of the differential geometry for $\Gamma$, see \eqref{eq:fdmm}.
Using \eqref{eq:sg1}, one can verify directly that
\begin{equation}
\begin{split}
\sqrt{|g|} \, \nu\wedge \mathrm{Grad}_\Gamma \nu_j &= \left( \frac{\partial x}{\partial u_1} \wedge \frac{\partial x}{\partial u_2} \right)  \wedge \sum_{k,l=1}^{2} g^{kl} \frac{\partial \nu_l}{\partial u_k} \frac{\partial x}{\partial u_l}\\
& = \sum_{k,l=1}^{2} g^{kl} \frac{\partial \nu_j}{\partial u_k} \left( g_{l1}\,\frac{\partial x}{\partial u_2} -g_{l2}\,\frac{\partial x}{\partial u_1} \right) \\
& = \sum_{k=1}^{2}  \frac{\partial \nu_j}{\partial u_k} \left( \delta_{k,1}\,\frac{\partial x}{\partial u_2} -\delta_{k,2}\,\frac{\partial x}{\partial u_1} \right)\\
& = \frac{\partial \nu_j}{\partial u_1}  \frac{\partial x}{\partial u_2} -\frac{\partial \nu_j}{\partial u_2} \frac{\partial x}{\partial u_1}, \quad j=1,2,3.
\end{split}
\end{equation}
Thus, we have
\begin{equation}\label{eq:pp331}
\begin{split}
& \nu\wedge\sum_{j=1}^3 U_j\mathrm{Grad}_\Gamma \nu_j \\
= & \frac{1}{\sqrt{|g|}} \sum_{j=1}^3 \left( U_j \frac{\partial \nu_j}{\partial u_1}  \frac{\partial x}{\partial u_2} - U_j\frac{\partial \nu_j}{\partial u_2} \frac{\partial x}{\partial u_1}\right) \\
= & - \frac{1}{\sqrt{|g|}}   \left( U \cdot \frac{\partial \nu}{\partial u_2} \right) \frac{\partial x}{\partial u_1} + \frac{1}{\sqrt{|g|}}   \left( U \cdot \frac{\partial \nu}{\partial u_1} \right) \frac{\partial x}{\partial u_2} .
\end{split}
\end{equation}
In view of \eqref{eq:q4} and \eqref{eq:pp331}, one readily has
\begin{equation}\label{eq:eqthm24_3}
U \cdot \frac{\partial \nu}{\partial u_l}=0\quad\mbox{on}\ \ \partial D,\quad l=1,2.
\end{equation}

Next, we show that the geometric condition \eqref{eq:gc1n131} implies that
\begin{equation}\label{eq:gc1n1311}
\frac{\partial \nu}{\partial u_1} \wedge \frac{\partial \nu}{\partial u_2} \neq 0\quad\mbox{for a.e.}\ x\in\partial D.
\end{equation}
Indeed, one can represent the tangential vectors $\partial \nu/\partial u_l$, $l=1,2$ as
\begin{equation}\label{eq:rem22_11}
\frac{\partial \nu}{\partial u_l}=\sum_{k=1}^{2} - b_l^k \, \frac{\partial x}{\partial u_k},
\end{equation}
from which we can directly compute that
\begin{equation}\label{eq:q5}
\begin{split}
& \frac{\partial \nu}{\partial u_1} \wedge \frac{\partial \nu}{\partial u_2} = \sum_{k,l=1}^{2}  b_1^k b_2^l \, \frac{\partial x}{\partial u_k}  \wedge \frac{\partial x}{\partial u_l} \\
= & \left( b^1_1 b^2_2 - b_1^2 b_2^1 \right) \frac{\partial x}{\partial u_1}  \wedge \frac{\partial x}{\partial u_2}=K(x)  \frac{\partial x}{\partial u_1}  \wedge \frac{\partial x}{\partial u_2}.
\end{split}
\end{equation}
Clearly, \eqref{eq:q5} implies that \eqref{eq:gc1n1311} holds true. That means the three vectors $\nu$, $\partial\nu/\partial u_1$ and $\partial \nu/\partial\nu_2$ are linearly independent a.e. on $\partial D$. Hence, by virtue of \eqref{eq:eqthm24_3} and  $\nu\cdot U=0$ on $\partial D$ in \eqref{eq:thirdkindga}, one immediately has
\begin{equation}\label{eq:q6}
U=0\quad\mbox{on}\ \ \partial D.
\end{equation}
That is, $D$ is also a first kind scatterer and hence $U(\cdot; U^{in}, [D, \mathrm{III}])=U(\cdot; U^{in}, [D, \mathrm{I}])\ \mbox{in}\ \mathbb{R}^3\backslash\overline{D}$, which is a contradiction to \eqref{eq:gc1n232}.

The proof is complete.
\end{proof}

\section{Elastic scattering from a ball}\label{sect:3}

In this section, we consider the elastic scattering from a ball; that is, we assume that $D=B_R$. In this case, one has
\begin{equation}\label{eq:cvtb}
H(x)\equiv -\frac{1}{R}\quad\mbox{and}\quad K(x)\equiv \frac{1}{R^2}.
\end{equation}
Hence, both the geometric conditions \eqref{eq:gcmc} and \eqref{eq:gc32} are not satisfied. We shall show that the pressure and shear waves cannot be decoupled, which reinforce our theoretical results in Section~\ref{sect:2}.

\subsection{Spherical wave functions}

Our arguments rely essentially on the expansion method with the spherical wave functions. We first collect some preliminary knowledge for the subsequent use, and refer to \cite{CK12,Ned} for more related results.

Let $j_n(t)$ and $h_n(t)$, $n\in\mathbb{N}\cup\{0\}$ and $t\in\mathbb{R}$, be the spherical Bessel function and the spherical Hankel function of the first kind, respectively.
Let $\{Y_n^m(\hat x);m=-n,\ldots,n\}$ be the orthonormal system of spherical harmonics of order $n\in\mathbb{N}$. 
The tangential fields on the unit sphere $\mathbb{S}^2$
\[
\frac{1}{\sqrt{n(n+1)}}\,\mathrm{Grad}\,Y_n^m(\hat x)\quad\mbox{and}\quad \frac{1}{\sqrt{n(n+1)}}\hat{x}\wedge\mathrm{Grad}\, Y_n^m(\hat x),
\]
for $m=-n,\ldots,n$, $n=1,2,\ldots$, are called vectorial spherical harmonics and form a complete orthonormal system in $L^2_t(\mathbb{S}^2)$ (cf. \cite{CK12}).

For $n\in\mathbb{N}\cup\{0\}$, $m=-n,\ldots,n$, and $f\equiv j$ or $f\equiv h$, we introduce the scalar functions
\begin{equation}\label{eq:eq2}
u_n^m[f;k](x):=f_n(k\|x\|)Y_n^m(\hat{x}),
\end{equation}
and the vector-valued functions
\begin{equation}\label{eq:eq3}
M_n^m[f;k](x):=\nabla\wedge\Big(xf_n(k\|x\|)Y_n^m(\hat{x})\Big).
\end{equation}
It is known that (cf. \cite{CK12}), $u_n^m[j;k_p]$ (resp. $u_n^m[h;k_p]$) is an entire (resp., a radiating) solution of the Helmholtz equation
\begin{equation}\label{eq:dsp31}
(\Delta+k_p^2) v=0,
\end{equation}
and $M_n^m[j;k_s]$ and $\nabla\wedge M_n^m[j]$ (resp. $M_n^m[h; k_s]$ and $\nabla\wedge M_n^m[h; k_s]$) are entire (resp., radiating) solutions of the Maxwell equation
\begin{equation}\label{eq:dsp611}
(\Delta+k_s^2) E=0,\quad  \nabla\cdot E =0.
\end{equation}

By direct calculations, one has for $n\in\mathbb{N}$, $m=-n,\ldots,n$ and $f\equiv j$ or $h$ that
\begin{equation}\label{eq:eq4}
\nabla u_n^m[f;k] (x) =kf'_n(k\|x\|)Y_n^m(\hat{x})\hat{x}+\frac{1}{\|x\|}f_n(k\|x\|)\mathrm{Grad}\, Y_n^m(\hat{x}),
\end{equation}
\begin{equation}\label{eq:eq5}
 M_n^m[f;k](x)=f_n(k\|x\|)\mathrm{Grad}\, Y_n^m(\hat{x})\wedge \hat{x},
\end{equation}
\begin{equation}\label{eq:eq6}
 \nabla\wedge\Big(M_n^m[f;k](x)\Big)=  n(n+1)\frac{f_n(k\|x\|)}{\|x\|}Y_n^m(\hat{x})\hat{x}+\frac{\check{f}_n(k\|x\|)}{\|x\|}\mathrm{Grad}\, Y_n^m(\hat{x}),
\end{equation}
where the function and $\check{f}_n$ is defined by
\[
\check{f}_n(t):=f_n(t)+tf'_n(t).
\]
In what follows, when there is no ambiguity, we omit the letter $k$ from the notations $u_n^m[f;k]$ and $M_n^m[f;k]$ and simply write them as $u_n^m[f]$ and $M_n^m[f]$, respectively.

Recall from \cite{CK12} that any radiating solutions $v_p^{sc}$ to the Helmholtz equation \eqref{eq:dsp31} and $E_s^{sc}$ to the Maxwell equations \eqref{eq:dsp611} for $\|x\|>R>0$ have the following expansions
\begin{equation}\label{eq:3}
v_p^{sc}=\sum_{n=0}^{\infty}\sum_{m=-n}^{n}a_n^m u_n^m[h;k_p],
\end{equation}
and
\begin{equation}\label{eq:4}
E_s^{sc}= \sum_{n=1}^{\infty}\sum_{m=-n}^{n}\Big ( b_n^m \nabla\wedge M_n^m[h; k_s] +c_n^m M_n^m[h; k_s] \Big ),
\end{equation}
which (along with their respective derivatives) converge uniformly on compact subsets of $\|x\|>R$.
Correspondingly, the far field patterns for the radiating solutions $v_p^{sc}$ and $E_s^{sc}$ of the expansions \eqref{eq:3} and \eqref{eq:4}, respectively, are given by
\begin{equation}\label{eq:eqq1}
v_p^{\infty}(\hat{x})=\frac{1}{k_p}\sum_{n=0}^{\infty}\sum_{m=-n}^{n} \frac{1}{i^{n+1}}a_n^m Y_n^m(\hat{x}),
\end{equation}
and
\begin{equation}\label{eq:eqq2}
  E_s^{\infty}(\hat{x})=\frac{1}{k_s}\sum_{n=1}^{\infty}\sum_{m=-n}^{n} \frac{1}{i^{n+1}}\Big( ik_sb_n^m \mathrm{Grad}Y_n^m(\hat{x})-c_n^m \hat{x}\wedge\mathrm{Grad}Y_n^m(\hat{x})\Big).
\end{equation}

Let
\[
\Phi(x,y):=\frac{1}{4\pi}\frac{e^{ik\|x-y\|}}{\|x-y\|}
\]
be the fundamental solution to the PDO $-(\Delta+k^2)$. For any vector $p\in\mathbb{R}^3$, we have the following expansion (see for instance, \cite[Theorem 6.29]{CK12}):
\begin{equation}\label{eq:5}
  \begin{split}
    p\,\Phi(z,y)= & ik\sum_{n=1}^{\infty}\sum_{m=-n}^{n}\frac{1}{n(n+1)}p\cdot\overline{M_n^m[j](y)}\, M_n^m[h](z) \\
      & +\frac{i}{k}\sum_{n=1}^{\infty}\sum_{m=-n}^{n}\frac{1}{ n(n+1)}p\cdot\nabla\wedge\overline{M_n^m[j](y)}\, \nabla\wedge M_n^m[h](z)\\
      &{+\frac{i}{k}\sum_{n=0}^{\infty}\sum_{m=-n}^{n}p\cdot\nabla \overline{u_n^m[j](y)} \,\nabla u_n^m[h](z),}
  \end{split}
\end{equation}
which is uniformly convergent on any compact subset of $\|z\|>\|y\|$.

We derive the following lemma for the later use.

\begin{lem}
Let $d\in\mathbb{S}^2$ and $p\in\mathbb{R}^3$ be two vectors such that $p\cdot d=0$. Then one has
\begin{equation}\label{eq:eqq5}
  \begin{split}
    p\,e^{ikx\cdot d}= & 4\pi\sum_{n=1}^{\infty}\sum_{m=-n}^{n}\frac{i^n}{n(n+1)}p\cdot (\mathrm{Grad}\, \overline{Y_n^m(d)}\wedge d)\,M_n^m[j](x)  \\
      & -4\pi\frac{1}{k}\sum_{n=1}^{\infty}\sum_{m=-n}^{n} \frac{i^{n+1}}{n(n+1)}p\cdot\mathrm{Grad}\, \overline{Y_n^m(d)}\,\nabla\wedge M_n^m[j](x),
  \end{split}
\end{equation}
and
\begin{equation}\label{eq:eqq4}
    de^{ikx\cdot d}= -4\pi\frac{1}{k}\sum_{n=0}^{\infty}\sum_{m=-n}^{n}i^{n+1}\overline{Y_n^m(d)}\,\nabla\,u_n^m[j](x).
\end{equation}
\end{lem}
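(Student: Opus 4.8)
The plan is to read off both identities from the expansion \eqref{eq:5} of $p\,\Phi(z,y)$ by passing to the far-field regime, \eqref{eq:eqq4} being by far the easier of the two. In fact \eqref{eq:eqq4} needs no vector machinery at all: starting from the classical scalar Jacobi--Anger expansion $e^{ikx\cdot d}=4\pi\sum_{n=0}^{\infty}\sum_{m=-n}^{n}i^{n}\,\overline{Y_n^m(d)}\,u_n^m[j;k](x)$ (cf.\ \cite{CK12}), one applies $\nabla$ to both sides and uses $\nabla e^{ikx\cdot d}=ik\,d\,e^{ikx\cdot d}$ together with $i^{n}/(ik)=-i^{n+1}/k$; this gives $d\,e^{ikx\cdot d}=-\frac{4\pi}{k}\sum_{n,m}i^{n+1}\overline{Y_n^m(d)}\,\nabla u_n^m[j](x)$, which is precisely \eqref{eq:eqq4}. (The scalar expansion itself is, once more, the scalar instance of the far-field limit used below.)

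For \eqref{eq:eqq5}, where now $p\cdot d=0$, I would use the full vector expansion \eqref{eq:5}. Keep $y$ fixed---it will become the spatial variable $x$, which must stay bounded since it carries the regular Bessel functions $j_n$---and let $z=-\rho\,d$ with $\rho\to+\infty$, so that eventually $\|z\|>\|y\|$ and \eqref{eq:5} applies. Since $\|z-y\|=\rho+d\cdot y+\mathcal{O}(\rho^{-1})$, one has $\Phi(z,y)=\tfrac{1}{4\pi\rho}e^{ik\rho}e^{ik\,d\cdot y}\bigl(1+\mathcal{O}(\rho^{-1})\bigr)$, hence
\[
\lim_{\rho\to+\infty}4\pi\rho\,e^{-ik\rho}\,p\,\Phi(-\rho\,d,y)=p\,e^{ik\,d\cdot y}.
\]
It then suffices to pass to the limit, term by term, in $4\pi\rho\,e^{-ik\rho}$ times the right-hand side of \eqref{eq:5}, and to rename $y$ as $x$. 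Inserting the asymptotics $h_n(t)=(-i)^{n+1}e^{it}/t+\mathcal{O}(t^{-2})$, $h_n'(t)=i(-i)^{n+1}e^{it}/t+\mathcal{O}(t^{-2})$ and $\check h_n(t)=i(-i)^{n+1}e^{it}+\mathcal{O}(t^{-1})$ into the explicit formulas \eqref{eq:eq4}--\eqref{eq:eq6}, one finds that, at $z=-\rho\,d$ and after multiplication by $4\pi\rho\,e^{-ik\rho}$, each of $M_n^m[h](z)$, $\nabla\wedge M_n^m[h](z)$ and $\nabla u_n^m[h](z)$ has a finite limit, with only the $\mathrm{Grad}\,Y_n^m(-d)$-component (respectively the radial $Y_n^m(-d)(-d)$-component, for $\nabla u_n^m[h]$) surviving, the other component being $o(1)$. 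The third sum in \eqref{eq:5} thus contributes only a multiple of $d$, whose coefficient vanishes because $p\cdot d=0$; the first and the second sums contribute, respectively, the $M_n^m[j](x)$-part and the $\nabla\wedge M_n^m[j](x)$-part of \eqref{eq:eqq5}. Along the way one invokes the antipodal parity relations $Y_n^m(-d)=(-1)^n Y_n^m(d)$ and $\mathrm{Grad}\,Y_n^m(-d)=(-1)^{n+1}\mathrm{Grad}\,Y_n^m(d)$---which combine with the phase $(-i)^{n+1}$ produced by $h_n$, $h_n'$, $\check h_n$ to yield the factors $i^{n}$ and $i^{n+1}$ of \eqref{eq:eqq5}---together with the conjugation identity $\overline{Y_n^m}=(-1)^m Y_n^{-m}$ and the relabelling $m\mapsto -m$ to put the double sum into the displayed arrangement.

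I expect the difficulty to lie in two places. First, \eqref{eq:5} is only asserted to converge uniformly on compact subsets of $\{\|z\|>\|y\|\}$, while the argument above forces $\|z\|\to+\infty$; justifying the term-by-term passage to the limit therefore relies on the uniform tail estimates that underpin the far-field expansions, and I would quote these from \cite{CK12} rather than reprove them. Second, although each individual step is elementary, the computation is delicate: the various powers of $i$ coming from $h_n$, $h_n'$ and $\check h_n$, the antipodal parity factors, the factors of $1/k$, and the fact that the two summands in \eqref{eq:eq4} and in \eqref{eq:eq6} scale differently in $\rho$ (so that exactly one of each pair survives the limit) all have to be combined without slip in order to land on precisely \eqref{eq:eqq4} and \eqref{eq:eqq5}. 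Once this bookkeeping is carried out, both identities fall out.
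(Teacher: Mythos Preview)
Your proposal is correct and follows essentially the same route as the paper: derive \eqref{eq:eqq4} by taking the gradient of the scalar Jacobi--Anger expansion, and obtain \eqref{eq:eqq5} by passing to the far field in the vector addition theorem \eqref{eq:5} together with the antipodal parity relations for $Y_n^m$. The only cosmetic difference is that the paper sends $z=r\hat{x}\to\infty$ with $y=-\|x\|d$ fixed (so that the complex conjugation in \eqref{eq:5} already sits on the $d$-side and your extra step of invoking $\overline{Y_n^m}=(-1)^mY_n^{-m}$ with the relabelling $m\mapsto-m$ is not needed), whereas you send $z=-\rho d\to\infty$ with $y=x$; both choices lead to the same identity.
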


\begin{proof}
The proof basically follows the strategies in \cite[Chapter 2]{CK12}.
For any $r\in\mathbb{R}_+$, we set $z=r\hat{x}$ and $y=-\|x\|d$. Notice that
\[
Y_n^m(-d)=(-1)^nY_n^m(d)\quad\text{and}\quad \mathrm{Grad}\, Y_n^m(-d)=(-1)^{n+1}\mathrm{Grad}\, Y_n^m(d).
\]
From \eqref{eq:eq4}-\eqref{eq:eq6}, we have
\begin{equation}\label{eq:eqq3}
  \begin{array}{ccc}
  p\cdot\overline{M_n^m[j](y)}&=&(-1)^nj_n(k\|z\|)p\cdot(\mathrm{Grad}\, \overline{Y_n^m(d)}\wedge d),\medskip\\
  p\cdot\nabla\wedge\overline{M_n^m[j](y)}&=&(-1)^{n+1}\frac{1}{\|z\|}\check{j}_n(k\|z\|)p\cdot\mathrm{Grad}\,\overline{Y_n^m(d)},\medskip\\
  p\cdot\nabla\,\overline{u_n^m[j](y)}&=&(-1)^{n+1}\frac{1}{\|z\|}j_n(k\|z\|)p\cdot\mathrm{Grad}\,\overline{Y_n^m(d)}.
  \end{array}
\end{equation}
Passing to the limit $r\rightarrow+\infty$ in the expansion \eqref{eq:5} for $\Phi(z,y)$, inserting \eqref{eq:eqq3}, and making use of the relations \eqref{eq:eqq1} and \eqref{eq:eqq2}, one can derive that
\[
  \begin{split}
    \frac{1}{4\pi}p\,e^{ikx\cdot d}= & \sum_{n=1}^{\infty}\sum_{m=-n}^{n}\frac{i^n}{n(n+1)}p\cdot(\mathrm{Grad}\,\overline{Y_n^m(d)}\wedge d)j_n(k\|z\|)\mathrm{Grad}\,Y_n^m(\hat{z})\wedge\hat{z}  \\
      & -\frac{1}{k}\sum_{n=1}^{\infty}\sum_{m=-n}^{n} \frac{i^{n+1}}{n(n+1)}p\cdot\mathrm{Grad}\,\overline{Y_n^m(d)}\frac{\check{j}_n(k\|z\|)}{\|z\|}\mathrm{Grad}\,Y_n^m(\hat{z})\\
      & -\frac{1}{k}\sum_{n=1}^{\infty}\sum_{m=-n}^{n} i^{n+1}p\cdot \mathrm{Grad}\,\overline{Y_n^m(d)}\frac{j_n(k\|z\|)}{\|z\|}Y_n^m(\hat{z})\hat{z},
  \end{split}
\]
which immediately yields \eqref{eq:eqq5}.

Replacing the vector $p$ by $d$ and applying similar arguments as above, one can obtain \eqref{eq:eqq4}.
The series \eqref{eq:eqq4} can also be obtained straightforwardly by taking gradient on both sides of the expansion (see for instance, \cite[Chapter 2]{CK12}
\[
e^{ikx\cdot d}=4\pi\sum_{n=0}^{\infty}\sum_{m=-n}^{n}i^n \overline{Y_n^m(d)}j_n(k\|x\|)Y_n^m(\hat{x}).
\]

The proof is complete.
\end{proof}

\subsection{Elastic scattering from a third or fourth kind ball}

Let $U^{in}=U_p^{in}+U_s^{in}$ be the incident plane elastic wave given by the general form \eqref{eq:planewave}.
In the rest of this section, we shall always let $\nu(x)=\hat{x}$ be the outward unit normal vector to the central ball $B_R$.

For $n\in\mathbb{N}\cup\{0\}$ and $m=-n,\ldots,n$, we define
\[
A_n^m(1):=4\pi i^{n+1}\overline{Y_n^m(d)},\quad A_n^m(2):=4\pi i^{n+1}d^{\perp}\cdot\mathrm{Grad}\,\overline{Y_n^m(d)},
\]
and
\[
A_n^m(3):=4\pi i^n d^{\perp}\cdot\left(\mathrm{Grad}\,\overline{Y_n^m(d)}\wedge d\right).
\]
It is directly computed from \eqref{eq:eqq4} and \eqref{eq:eq4} for $ U_p^{in}=\alpha_p  d e^{ik_pd\cdot x }$ that
\begin{equation}\label{eq:eq3161}
\begin{array}{rl}
\hat{x}\cdot U_p^{in}=& \displaystyle{ -\alpha_p \sum_{n=0}^{\infty}\sum_{m=-n}^{n}A_n^m(1)j'_n(k_p\|x\|)Y_n^m(\hat{x})},\medskip\\
\hat{x}\wedge U_p^{in}= & \displaystyle{- \alpha_p\sum_{n=0}^{\infty}\sum_{m=-n}^{n}A_n^m(1)\frac{j_n(k_p\|x\|)}{k_p\|x\|}\hat{x}\wedge\mathrm{Grad}\, Y_n^m(\hat{x}),}
\end{array}
\end{equation}
and that
\begin{equation}\label{eq:eq314}
\nabla\cdot U_p^{in}=\alpha_p \nabla\cdot \left( d e^{ik_pd\cdot x }\right) =\alpha_p k_p\sum_{n=0}^{\infty}\sum_{m=-n}^{n} A_n^m(1) u_n^m[j; k_p],
\end{equation}
We also have from \eqref{eq:eqq5}, \eqref{eq:eq5} and \eqref{eq:eq6} for $U_s^{in}=\alpha_s  d^{\perp} e^{ik_sd\cdot x }$ that
\begin{equation}\label{eq:eq3162}
\begin{array}{rl}
\hat{x}\cdot U_s^{in}=& \displaystyle{- \alpha_s\sum_{n=1}^{\infty}\sum_{m=-n}^{n}A_n^m(2)\frac{j_n(k_s\|x\|)}{k_s\|x\|}Y_n^m(\hat{x}),}\medskip \\
\hat{x}\wedge U_s^{in} = & \displaystyle{\alpha_s\sum_{n=1}^{\infty}\frac{1}{n(n+1)}\sum_{m=-n}^{n} A_n^m(3)j_n(k_s\|x\|)\mathrm{Grad}\,Y_n^m(\hat{x})}\medskip \\
&\hspace*{-8mm}\displaystyle{ -\alpha_s\sum_{n=1}^{\infty}\frac{1}{n(n+1)}\sum_{m=-n}^{n}A_n^m(2)\frac{\check{j}_n(k_s\|x\|)}{k_s\|x\|}\hat{x}\wedge\text{Grad}\,Y_n^m(\hat{x}).}
\end{array}
\end{equation}

It is obtained by \eqref{eq:3} and \eqref{eq:4} that, for any radiating solution $U^{sc}$ of the Navier equation \eqref{eq:lames} outside a certain ball, $U^{sc}=U^{sc}_p+U^{sc}_s$ has the following series expansions
\begin{equation}\label{eq:eqqq5}
  \begin{array}{rl}
    U^{sc}_p =& \displaystyle{\sum_{n=0}^{\infty}\sum_{m=-n}^{n}a_n^m \nabla u_n^m[h; k_p],}\medskip \\
    U^{sc}_s =& \displaystyle{\sum_{n=1}^{\infty}\sum_{m=-n}^{n}\Big( b_n^m \nabla\wedge M_n^m[h;k_s] +c_n^m M_n^m[h; k_s]\Big).}
  \end{array}
\end{equation}
Using the series representations in \eqref{eq:eqqq5}, it is directly computed that
\begin{equation}\label{eq:eq318}
\nabla\cdot U^{sc}_p =-k_p^2\sum_{n=0}^{\infty}\sum_{m=-n}^{n}a_n^m u_n^m[h; k_p],
\end{equation}
and
\begin{equation}\label{eq:eq319}
  \begin{array}{rl}
    \hat{x}\cdot U^{sc}_p =& \displaystyle{k_p \sum_{n=0}^{\infty}\sum_{m=-n}^{n}a_n^m h'_n(k_p \|x\|) Y_n^m(\hat{x}),} \\
    \hat{x}\cdot U^{sc}_s =& \displaystyle{ \frac{1}{\|x\|}\sum_{n=1}^{\infty}\sum_{m=-n}^{n} n(n+1) b_n^m h_n(k_s\|x\|)Y_n^m(\hat{x}),}
  \end{array}
\end{equation}
and
\begin{equation}\label{eq:eq320}
  \begin{array}{rl}
    \hat{x}\wedge U_p^{sc} =& \displaystyle{\sum_{n=0}^{\infty}\sum_{m=-n}^{n}a_n^m \frac{h_n(k_p \|x\|)}{\|x\|}\hat{x}\wedge\mathrm{Grad}\, Y_n^m(\hat{x}),} \\
    \hat{x}\wedge U_s^{sc} =& \displaystyle{\sum_{n=1}^{\infty}\sum_{m=-n}^{n} c_n^m h_n(k_s \|x\|)\mathrm{Grad}\, Y_n^m(\hat{x})}\\
     & \displaystyle{+\sum_{n=1}^{\infty}\sum_{m=-n}^{n} b_n^m\frac{\check{h}_n(k_s \|x\|)}{\|x\|} \hat{x}\wedge\mathrm{Grad}\, Y_n^m(\hat{x}).}
  \end{array}
\end{equation}

The following lemma shall also be needed.

\begin{lem}\label{lem:2}
	Let $\mathscr{D}_\nu$ be defined in \eqref{eq:pp32} with $\nu(x)= \hat{x}$. Then there holds
	\[
	\mathscr{D}_{\nu}U(x)=\nabla(\nu\cdot U)(x)-\frac{1}{\|x\|}U(x)+\frac{1}{\|x\|}\nu(x)(\nu\cdot U)(x),\ \ x\in\partial B_R.
	\]
\end{lem}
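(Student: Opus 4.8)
The plan is to compute $\mathscr{D}_\nu U$ directly from the definition $\mathscr{D}_\nu U = [\nu\cdot\partial_j U]_{j=1}^3$ when $\nu(x) = \hat x = x/\|x\|$, and relate it to $\nabla(\nu\cdot U)$. First I would start from the product-rule expansion of $\nabla(\nu\cdot U) = \sum_{j=1}^3 \nabla(\nu_j U_j)$, which gives $\nabla(\nu\cdot U) = \sum_j \nu_j \nabla U_j + \sum_j U_j \nabla \nu_j$, where $\nabla U_j$ denotes the gradient of the $j$-th component. Recognizing that $[\sum_j \nu_j \nabla U_j]_l = \sum_j \nu_j \partial_l U_j$ is not quite $\mathscr{D}_\nu U$ — rather $[\mathscr{D}_\nu U]_l = \nu\cdot\partial_l U = \sum_j \nu_j \partial_l U_j$ — so in fact $\mathscr{D}_\nu U = \sum_j \nu_j \nabla U_j$ exactly (reading the $l$-th component). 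Hence $\mathscr{D}_\nu U = \nabla(\nu\cdot U) - \sum_{j=1}^3 U_j \nabla\nu_j$, and the whole task reduces to showing that $\sum_{j=1}^3 U_j\,\nabla\nu_j = \frac{1}{\|x\|}U - \frac{1}{\|x\|}\nu(\nu\cdot U)$ on $\partial B_R$.

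The second step is therefore the explicit computation of $\nabla\nu_j$ for $\nu = x/\|x\|$. A direct calculation gives $\partial_l\nu_j = \partial_l\!\left(\frac{x_j}{\|x\|}\right) = \frac{\delta_{jl}}{\|x\|} - \frac{x_j x_l}{\|x\|^3} = \frac{1}{\|x\|}\left(\delta_{jl} - \nu_j\nu_l\right)$, so $\nabla\nu_j = \frac{1}{\|x\|}(e_j - \nu_j\nu)$ where $e_j$ is the $j$-th coordinate vector. Multiplying by $U_j$ and summing over $j$ yields $\sum_j U_j\nabla\nu_j = \frac{1}{\|x\|}\sum_j U_j e_j - \frac{1}{\|x\|}\left(\sum_j U_j\nu_j\right)\nu = \frac{1}{\|x\|}U - \frac{1}{\|x\|}(\nu\cdot U)\nu$. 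Combining with the identity from the first step gives $\mathscr{D}_\nu U = \nabla(\nu\cdot U) - \frac{1}{\|x\|}U + \frac{1}{\|x\|}\nu(\nu\cdot U)$, which is exactly the claimed formula. Restricting to $x\in\partial B_R$ (where $\|x\| = R$) is only cosmetic; the identity in fact holds wherever $\nu(x) = \hat x$ is defined.

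I do not anticipate a genuine obstacle here — this is essentially a bookkeeping exercise in the chain/product rule. The only point requiring a moment's care is the indexing convention: one must be consistent about whether $\mathscr{D}_\nu U$ collects the quantities $\nu\cdot\partial_j U$ into the $j$-th slot (as in \eqref{eq:pp32}) versus the quantities $\sum_j\nu_j\partial_l U_j$ into the $l$-th slot, and to verify these coincide. Since $[\mathscr{D}_\nu U]_l = \nu\cdot\partial_l U = \sum_{j}\nu_j\,\partial_l U_j$, the two readings agree, and this is the identity $\mathscr{D}_\nu U = \sum_j\nu_j\nabla U_j$ used above. Beyond that, the differentiation of $x/\|x\|$ and the elementary vector algebra that produces the rank-one correction $\frac{1}{\|x\|}\nu(\nu\cdot U)$ are routine, so the proof will be short.
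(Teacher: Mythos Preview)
Your argument is correct and is essentially identical to the paper's proof: both apply the product rule to write $[\mathscr{D}_\nu U]_j=\partial_j(\nu\cdot U)-U\cdot\partial_j\nu$ and then insert the explicit formula $\partial_j\nu=\frac{1}{\|x\|}\mathbf{e}_j-\frac{x_j}{\|x\|^2}\nu$. The only cosmetic difference is that you phrase the computation via $\nabla\nu_j$ rather than $\partial_j\nu$, which amounts to the same (symmetric) Jacobian of $\hat x$.
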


\begin{proof}
	It is clear that
	\[
	\nu\cdot \partial_j U=\partial_j(\nu\cdot U)-U\cdot \partial_j\nu,\quad j=1,2,3.
	\]
	The proof can be concluded by noting for $j=1,2,3$ that
	\[
	\partial_j\nu(x)=\frac{1}{\|x\|}\mathbf{e}_j-\frac{x_j}{\|x\|^2}\nu.
	\]
\end{proof}

\subsubsection{The fourth kind boundary condition}\label{sect:ball_4th}

Now we consider the case that the ball $B_R$ is a fourth kind scatterer. By using the series expansions introduced earlier, we determine the scattered field $U^{sc}=U^{sc}_p+U^{sc}_s$ in the form \eqref{eq:eqqq5}, such that the fourth kind boundary condition \eqref{eq:fourthkind} holds for $U=U^{in}+U^{sc}$ on $\partial B_R$, namely,
\[
\nu\wedge U^{sc}=-\nu\wedge U^{in}\quad\text{and}\quad\nu\cdot TU^{sc}=-\nu\cdot TU^{in}\quad\mbox{on $\partial B_R$}.
\]

For the condition $\nu\wedge U=0$ on $\partial B_R$, one first sees from \eqref{eq:eq3161}, \eqref{eq:eq3162} and \eqref{eq:eq320} that
\begin{equation}\label{eq:eqqqq2}
  c_n^m=-\alpha_s\frac{A_n^m(3)}{n(n+1)}\frac{j_n(k_sR)}{h_n(k_sR)}.
\end{equation}
On the other hand, from Lemma~\ref{lem:2} and the equation \eqref{eq:pp37}, it is verified that
\begin{equation}\label{eq:eqq6}
\begin{split}
 \nu\cdot TU & =2\mu\, \nu\cdot \mathscr{D}_{\nu}U+\lambda \nabla\cdot U\\
 &=2\mu\, \nu\cdot\nabla(\nu\cdot U)+\lambda \nabla\cdot U.
\end{split}
\end{equation}
Inserting \eqref{eq:eq314} and \eqref{eq:eq3161} into \eqref{eq:eqq6}, we then arrive at
\[
\begin{split}
 \nu\cdot TU_p^{in} & =2\mu\, \nu\cdot\nabla (\nu\cdot U_p^{in})+\lambda \nabla\cdot U_p^{in} \\
    & =a_p \sum_{n=0}^{\infty}\sum_{m=-n}^{n} A_n^m(1)k_p \Big(\lambda j_n(k_p\|x\|)-2\mu j''_n(k_p \|x\|) \Big) Y_n^m(\hat{x}).
\end{split}
\]
In the same way, applying \eqref{eq:eq3162} to \eqref{eq:eqq6}, we have
\[
 \nu\cdot TU_s^{in}  =2\mu\, \nu\cdot\nabla (\nu\cdot U_s^{in})  = \alpha_s\sum_{n=1}^{\infty}\sum_{m=-n}^{n}A_n^m(2)2\mu\frac{\widetilde{j}_n(k_s\|x\|)}{k_s \|x\|^2}Y_n^m(\hat{x}),
\]
where the functions $\widetilde{j}_n$, $n\in\mathbb{N}$, are given by $\widetilde{j}_n(t):=j_n(t)-tj'_n(t)$.
Similarly, we have from \eqref{eq:eq318}, \eqref{eq:eq319} and \eqref{eq:eqq6} that
\[
\nu\cdot TU_s^{sc}=2\mu\,\hat{x}\cdot\nabla\left(\hat{x}\cdot U_s^{sc}\right)=-2\mu\sum_{n=1}^{\infty}\sum_{m=-n}^{n} b_n^m n(n+1) \frac{\widetilde{h}_n(k_s \|x\|)}{k_s \|x\|^2}Y_n^m(\hat{x}),
\]
and
\[
\begin{split}
  \nu\cdot TU_p^{sc} & = 2\mu\, \hat{x}\cdot\nabla\left(\hat{x}\cdot U_p^{sc}\right)+\lambda\nabla\cdot U_p^{sc}\\
    & =-\sum_{n=0}^{\infty}\sum_{m=-n}^{n} a_n^m k_p^2 \Big(\lambda h_n(k_p \|x\|)-2\mu h''_n(k_p \|x\|)\Big) Y_n^m(\hat{x}),
\end{split}
\]
with the functions $\widetilde{h}_n$, $n\in\mathbb{N}$, defined by $\widetilde{h}_n(t):=h_n(t)-th'_n(t)$.

If both \eqref{eq:fourthkindga} and \eqref{eq:decompps2} hold for the scatterer $[B_R, \mathrm{IV}]$, then one should have $\nu\wedge U_p=\nu\wedge U_s=0$ on $\partial B_R$. As a consequence, it is deduced from \eqref{eq:eq314} and \eqref{eq:eq320} that $a_n^m=\widetilde{a}_n^m$ and $b_n^m=\widetilde{b}_n^m$ with
\[
\widetilde{a}_n^m:=\alpha_pA_n^m(1)\frac{1}{k_p}\frac{j_n(k_pR)}{h_n(k_pR)}\quad\text{and}\quad \widetilde{b}_n^m=\alpha_s\frac{A_n^m(2)}{n(n+1)}\frac{1}{k_s}\frac{\check{j}_n(k_sR)}{\check{h}_n(k_sR)},
\]
for $n\in\mathbb{N}_+$ and $m=-n\ldots,n$.

Define $\widetilde{U}_p:=U^{in}_p+\widetilde{U}^{sc}_p$ and $\widetilde{U}_s:=U^{in}_s+\widetilde{U}^{sc}_s$ with
\begin{equation}\label{eq:eqqqq1}
  \begin{array}{rl}
    \widetilde{U}^{sc}_p :=&\displaystyle{ \sum_{n=1}^{\infty}\sum_{m=-n}^{n}\,\widetilde{a}_n^m \nabla u_n^m[h; k_p],}\medskip \\
    \widetilde{U}^{sc}_s :=& \displaystyle{ \sum_{n=1}^{\infty}\sum_{m=-n}^{n}\big(\widetilde{b}_n^m \nabla\wedge M_n^m[h; k_s] +c_n^mM_n^m[h;k_s]\big),}
  \end{array}
\end{equation}
where the constants $c_n^m$, $m=-n,\ldots,n$, $n\in\mathbb{N}$, are given by \eqref{eq:eqqqq2}.
Then $\widetilde{U}:=\widetilde{U}_p+\widetilde{U}_s$ is a solution to the exterior problem for the Navier equation \eqref{eq:lames}, which satisfies \eqref{eq:decompps2} on the boundary $\partial B_R$.

By using the property $$j_n(t) h'_n(t)-j'_n(t) h_n(t)=i/t^2$$ (cf. \cite{CK12,Ned}), one can directly compute for $x\in\partial B_R$ that
\[
\nu\cdot T\widetilde{U}_p(x)=-4\mu\frac{ i}{R^3} \sum_{n=1}^{\infty}\sum_{m=-n}^{n} \frac{a_p}{k_p^2} \frac{A_n^m(1)}{h_n(k_pR)}Y_n^m(\hat{x}),
\]
and that
\[
\nu\cdot T\widetilde{U}_s(x)=4\mu\frac{ i}{R^3} \sum_{n=1}^{\infty}\sum_{m=-n}^{n} \frac{\alpha_s}{k_s^2} \frac{A_n^m(2)}{h_n(k_sR)}Y_n^m(\hat{x}).
\]
Therefore, noting that $\nu\cdot TU=0$ on $\partial B_R$, in order for the decoupling \eqref{eq:decompps2} to be true for the elastic scattering problem with $[B_R,\mathrm{IV}]$, there should hold that
\begin{equation}
  \frac{d^{\perp}\cdot\text{Grad}\overline{Y_n^m(d)}}{\overline{Y_n^m(d)}}=\frac{\alpha_p/k_p^2}{\alpha_s/k_s^2} \,\frac{h_n(k_sR)}{h_n(k_pR)},\quad m=-n,\ldots,n,
\end{equation}
for any $n\in\mathbb{N}$.
Clearly, this cannot be true for all $n\in\mathbb{N}$. Hence, \eqref{eq:fourthkindga} and \eqref{eq:decompps2} cannot hold simultaneously for the scatterer $[B_R, \mathrm{IV}]$. Moreover, by summarizing our earlier discussion, we have the following result.

\begin{prop}
For the scattered elastic field of the scattering problem \eqref{eq:lames}--\eqref{eq:radiation} corresponding to $[B_R,\mathrm{IV}]$, we have
\[
U^{sc}(\cdot;[B_R,IV])=\widetilde{U}^{sc}+\sum_{n=1}^{\infty}\sum_{m=-n}^{n} \alpha_n^m \Big(\nabla u_n^m[h; k_p]-\nabla\wedge M_n^m[h; k_s]\Big),
\]
where $\widetilde{U}^{sc}:=\widetilde{U}^{sc}_p+\widetilde{U}^{sc}_s$ is given by \eqref{eq:eqqqq1}, and $\alpha_n^m$, $m=-n,\ldots,n$, $n\in\mathbb{N}_+$ are constants satisfying
\begin{gather*}
  \alpha_n^m\left(h_n(t_p)-2n(n+1)\frac{\widetilde{h}_n(t_s)}{t_s^2}-2\frac{k_p^2}{k_s^2}\big( h_n(t_p)-h''_n(t_p)\big) \right) \\
  =\frac{4i}{k_s^2 R^3}\left( \frac{\alpha_s}{k^2_s}\,\frac{A_n^m(2)}{h_n(t_s)}- \frac{\alpha_p}{k^2_p}\,\frac{A_n^m(1)}{h_n(t_p)}\right),\quad m=-n,\ldots,n,\ \ n\in\mathbb{N},
\end{gather*}
with $t_p=k_p R$ and $t_s=k_s R$.

Particularly, $U(\cdot;[B_R,\mathrm{IV}])$ satisfies the decoupling property \eqref{eq:decompps2} if and only if $\alpha_n^m=0$ for all $m=-n,\ldots,n$ and $n\in\mathbb{N}$. This cannot be true unless $\alpha_s=\alpha_p=0$.
\end{prop}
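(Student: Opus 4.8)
The strategy is to determine $U^{sc}(\cdot;[B_R,\mathrm{IV}])$ explicitly from its spherical‑wave expansion and then read off the obstruction to \eqref{eq:decompps2}. Since the fourth‑kind scattering problem \eqref{eq:lames}--\eqref{eq:radiation} is well posed, $U^{sc}=U_p^{sc}+U_s^{sc}$ has the form \eqref{eq:eqqq5} with unknown coefficients $a_n^m,b_n^m,c_n^m$, and these are fixed by imposing $\nu\wedge U=0$ and $\nu\cdot TU=0$ on $\partial B_R$ mode by mode, using the expansions \eqref{eq:eq3161}--\eqref{eq:eq3162}, \eqref{eq:eq314} for $U^{in}$, the expansions \eqref{eq:eq318}--\eqref{eq:eq320} for $U^{sc}$, and the traction formulas computed above.

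The crucial point is that the auxiliary field $\widetilde U=\widetilde U_p+\widetilde U_s$ built from $\widetilde a_n^m,\widetilde b_n^m,c_n^m$ in \eqref{eq:eqqqq1}--\eqref{eq:eqqqq2} is, by construction, a radiating solution of the Navier equation which satisfies $\nu\wedge\widetilde U=0$ on $\partial B_R$ as well as the decoupling relations \eqref{eq:decompps2} on $\partial B_R$; the only fourth‑kind condition it can fail is $\nu\cdot T\widetilde U=0$, and the value of $\nu\cdot T\widetilde U$ on $\partial B_R$ has already been computed in closed form (this is where the Wronskian $j_n(t)h_n'(t)-j_n'(t)h_n(t)=i/t^2$ is used). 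Since the genuine field $U$ also satisfies $\nu\wedge U=0$, the difference $V:=U^{sc}-\widetilde U^{sc}$ is a radiating Navier field with $\nu\wedge V=0$ on $\partial B_R$. Projecting \eqref{eq:eq320} onto the vectorial spherical harmonics, this tangential‑trace condition kills the $M_n^m[h;k_s]$‑coefficients of $V$ and couples its $\nabla u_n^m[h;k_p]$‑ and $\nabla\wedge M_n^m[h;k_s]$‑coefficients, which yields the asserted form $V=\sum_{n,m}\alpha_n^m\big(\nabla u_n^m[h;k_p]-\nabla\wedge M_n^m[h;k_s]\big)$ (the radial $n=0$ mode, on which $\nu\wedge\cdot$ is trivial, is treated directly).

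To pin down the $\alpha_n^m$ I would substitute $U=\widetilde U+V$ into the remaining condition $\nu\cdot TU=0$, i.e. $\nu\cdot TV=-\nu\cdot T\widetilde U$. Using the per‑mode traction formulas established above for $\nabla u_n^m[h;k_p]$ and for $\nabla\wedge M_n^m[h;k_s]$, together with the closed‑form value of $\nu\cdot T\widetilde U$, and projecting onto $Y_n^m$, one arrives at exactly the scalar identity for $\alpha_n^m$ stated in the proposition; this determines $U^{sc}(\cdot;[B_R,\mathrm{IV}])$ completely. For the decoupling assertion, note that since $\widetilde U$ already satisfies \eqref{eq:decompps2} on $\partial B_R$, the field $U=\widetilde U+V$ satisfies \eqref{eq:decompps2} on $\partial B_R$ if and only if $V$ does; but the pressure part of $V$ is $\sum\alpha_n^m\nabla u_n^m[h;k_p]$, whose divergence $-k_p^2\sum\alpha_n^m h_n(k_p\|x\|)Y_n^m$ vanishes on $\partial B_R$ precisely when $\alpha_n^m h_n(k_pR)=0$, i.e. when $\alpha_n^m=0$ for all $n,m$ (since $h_n(k_pR)\neq0$). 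Hence decoupling holds iff $V\equiv0$, iff $\widetilde U$ is itself a fourth‑kind field, iff $\nu\cdot T\widetilde U=0$ on $\partial B_R$; by the explicit formula this is the identity $d^{\perp}\!\cdot\!\mathrm{Grad}\,\overline{Y_n^m(d)}\big/\overline{Y_n^m(d)}=\tfrac{\alpha_p/k_p^2}{\alpha_s/k_s^2}\,h_n(k_sR)/h_n(k_pR)$ for all $n\ge1$ and $|m|\le n$, which, on comparing how its two sides depend on $m$ and on $n$, cannot hold unless $\alpha_p=\alpha_s=0$ (the cases $\alpha_p=0\neq\alpha_s$ and $\alpha_s=0\neq\alpha_p$ being excluded since neither $\{\overline{Y_n^m(d)}\}$ nor $\{d^\perp\!\cdot\!\mathrm{Grad}\,\overline{Y_n^m(d)}\}$ vanishes identically).

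The step I expect to be the real work is deriving the $\alpha_n^m$ equation in the displayed closed form: this means inserting the relation $\nu\wedge V=0$ into $\nu\cdot TV=-\nu\cdot T\widetilde U$, invoking the Bessel/Hankel Wronskian, and keeping careful track of the numerous normalizing constants ($k_p$ versus $k_s$, $\widetilde h_n$ versus $\check h_n$, powers of $R$), which is where sign or scaling errors are easiest. The impossibility of the final identity for every $(n,m)$ is comparatively routine.
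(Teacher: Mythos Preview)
Your outline is exactly the route the paper follows: build the auxiliary field $\widetilde U$ from \eqref{eq:eqqqq1}--\eqref{eq:eqqqq2}, note that it already satisfies $\nu\wedge\widetilde U=0$ and the relations \eqref{eq:decompps2} on $\partial B_R$, evaluate $\nu\cdot T\widetilde U$ via the Wronskian identity, and then determine the correction $V=U^{sc}-\widetilde U^{sc}$ from the residual traction. The paper carries out precisely these computations in the paragraphs preceding the proposition and states the result as a summary of that discussion.

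One step to tighten: you claim that $\nu\wedge V=0$ on $\partial B_R$ \emph{directly} produces the combination $V=\sum_{n,m}\alpha_n^m\big(\nabla u_n^m[h;k_p]-\nabla\wedge M_n^m[h;k_s]\big)$ with equal and opposite weights. From \eqref{eq:eq320}, the $\hat x\wedge\mathrm{Grad}\,Y_n^m$ component of $\nu\wedge V=0$ actually gives
\[
(\delta a_n^m)\,h_n(t_p)+(\delta b_n^m)\,\check h_n(t_s)=0,
\]
so the one-parameter family per mode has ratio $h_n(t_p):\check h_n(t_s)$, not $1:1$. This does not invalidate the approach---it is exactly the linear coupling you describe, and substituting it into $\nu\cdot TV=-\nu\cdot T\widetilde U$ and projecting onto $Y_n^m$ still produces a scalar equation for the single free parameter per mode---but the identification with the $\alpha_n^m$ exactly as displayed in the statement requires care about this ratio. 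You already flag this bookkeeping as ``the real work''; just be aware that the form of $V$ is dictated by the above relation rather than by a bare $1:-1$ split.
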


\subsubsection{The third kind boundary condition}

In this subsection, we consider the case that $B_R$ is a third kind scatterer. Similar to our study in Section~\ref{sect:ball_4th}, we determine the scattered field $U^{sc}=U^{sc}_p+U^{sc}_s$ by using the series representation \eqref{eq:eqqq5}, such that the total wave $U=U^{in}+U^{sc}$ satisfies \eqref{eq:fourthkindga} on $\partial B_R$. Then we show that the decoupling property \eqref{eq:decompps232} does not hold for the scatterer $[B_R,\mathrm{III}]$.

In an analogous manner to our argument in Section~\ref{sect:ball_4th}, we first determine functions $\check{U}_p:=\check{U}_p^{sc}+U_p^{in}$ and $\check{U}_s:=\check{U}_s^{sc}-U_s^{in}$ with
\begin{equation}\label{eq:eq_checkU}
  \begin{array}{rl}
    \check{U}^{sc}_p :=&\displaystyle{ \sum_{n=0}^{\infty}\sum_{m=-n}^{n}\,\check{a}_n^m \text{grad}u_n^m[h;k_p],}\medskip \\
    \check{U}^{sc}_s :=& \displaystyle{\sum_{n=1}^{\infty}\sum_{m=-n}^{n}\left(\check{b}_n^m \nabla\wedge M_n^m[h;k_s] +\check{c}_n^mM_n^m[h;k_s]\right),}
  \end{array}
\end{equation}
such that $\check{U}=\check{U}_p+\check{U}_s$ 
satisfies the decoupling condition \eqref{eq:decompps232} on $\partial B_R$.

It is directly computed that
\[
\begin{split}
  \nu\wedge \big(\nabla\wedge \check{U}_s^{sc}\big)= & k_s^2\sum_{n=1}^{\infty}\sum_{m=-n}^{n} \check{b}_n^m h_n(k_s \|x\|)\,\mathrm{Grad}\,Y_n^m(\hat{x}) \\
    & +\sum_{n=1}^{\infty}\sum_{m=-n}^{n} \check{c}_n^m\frac{\check{h}_n(k_s \|x\|)}{\|x\|}\,\hat{x}\wedge\mathrm{Grad}\, Y_n^m(\hat{x}).
\end{split}
\]
and obtained from \eqref{eq:eqq5} that
\begin{equation}\label{eq:eq326}
  \begin{split}
  \nu\wedge\big(\nabla\wedge U_s^{in}\big) =& \alpha_s\sum_{n=1}^{\infty}\sum_{m=-n}^{n} \frac{A_n^m(3)}{n(n+1)}\frac{\check{j}_n(k_s \|x\|)}{\|x\|}\hat{x}\wedge\mathrm{Grad}\,Y_n^m(\hat{x}) \\
    & -\alpha_s\sum_{n=1}^{\infty}\sum_{m=-n}^{n} \frac{A_n^m(2)}{n(n+1)}k_s j_n(k_s \|x\|)\mathrm{Grad}\, Y_n^m(\hat{x}).
\end{split}
\end{equation}
Thus, by the condition $\nu\wedge \big( \nabla\wedge \check{U}_s \big)=0$ on $\partial B_R$, one has
\[
\check{b}_n^m=\frac{\alpha_s}{k_s}\frac{A_n^m(2)}{n(n+1)} \frac{j_n(k_sR)}{h_n(k_sR)}\quad \text{and} \quad \check{c}_n^m=-\alpha_s\frac{A_n^m(3)}{n(n+1)} \frac{\check{j}_n(k_sR)}{\check{h}_n(k_sR)}.
\]
Moreover, it is observed from \eqref{eq:eq3161} and \eqref{eq:eq_checkU} along with the relation $\nu\cdot \check{U}_p=0$ in \eqref{eq:decompps232} on $\partial B_R$ that
\[
\check{a}_n^m=A_n^m(1) \frac{\alpha_p}{k_p} \frac{j'_n(k_pR)}{h'_n(k_pR)}.
\]

So far, we have obtained the total wave field $\check{U}=\check{U}_p+\check{U}_s$ in \eqref{eq:eq_checkU} which satisfies the decoupling relation \eqref{eq:decompps232} on $\partial B_R$. Next, we assert that
\begin{equation}\label{eq:assertion}
\check{U}\neq U(\cdot;[B_R, \mathrm{III}]).
\end{equation}
Noting that $\nu\cdot U=0$ on $\partial B_R$, we can first see from Lemma~\ref{lem:2} that
\[
\nu\wedge \mathscr{D}_{\nu}U=\nu\wedge\nabla(\nu\cdot U)-\frac{1}{\|x\|}\nu\wedge U=-\frac{1}{\|x\|}\nu\wedge U.
\]
Thus, using \eqref{eq:pp37}, one observes that
\[
\nu\wedge TU=-2\mu\, \frac{1}{\|x\|}\, \nu\wedge U-\mu\, \nu\wedge\big(\nu\wedge (\nabla\wedge U) \big)\quad\mbox{on $\partial B_R$}.
\]
Define a new operator $\widetilde{T}$ on $\partial B_R$ by
\[
\widetilde{T}U:=-2\mu \frac{1}{\|x\|} U-\mu \nu\wedge \big(\nabla\wedge U\big).
\]
It is easily seen that
\[
\nu\wedge \widetilde{T}U=\nu\wedge TU=0 \quad\mbox{on $\partial B_R$}.
\]
Next we establish the representation of $\nu\wedge \widetilde{T}\check{U}$. Notice that $\nu\wedge \big(\nabla\wedge\check{U}\big)=0$ on $\partial B_R$. Thus we have
$$
\nu\wedge \widetilde{T}\check{U}=-2\mu \frac{1}{\|x\|}\nu\wedge \check{U}.
$$
Using \eqref{eq:eq3161}, \eqref{eq:eq3162} and \eqref{eq:eq_checkU}, one can derive on $\partial B_R$ with straightforward computations that
\begin{equation}\label{eq:eq_TUsch}
\begin{split}
  \nu\wedge \widetilde{T}\check{U}_s= & -2i \mu \alpha_s\frac{1}{R} \sum_{n=1}^{\infty}\frac{1}{n(n+1)}\sum_{m=-n}^{n} \frac{A_n^m(2)}{t_s^2 h_n(t_s)}\hat{x}\wedge\mathrm{Grad}\, Y_n^m(\hat{x}) \\
    & -2i \mu \alpha_s\frac{1}{R} \sum_{n=1}^{\infty}\frac{1}{n(n+1)}\sum_{m=-n}^{n}\frac{A_n^m(3)}{t_s \check{h}_n(t_s)}\mathrm{Grad}\, Y_n^m(\hat{x}),
\end{split}
\end{equation}
and that
\begin{equation}\label{eq:eq_TUpch}
  \nu\wedge \widetilde{T}\check{U}_p=2i \mu \alpha_p\frac{1}{R} \sum_{n=0}^{\infty}\sum_{m=-n}^{n} \frac{A_n^m(1)}{t_p^3 h'_n(t_p)}\hat{x}\wedge\mathrm{Grad}\,Y_n^m(\hat{x}).
\end{equation}
It is obvious that $\nu\wedge \widetilde{T}\check{U}$ is not identically zero on $\partial B_R$, which verifies our assertion \eqref{eq:assertion}.

By summarizing our earlier discussion, we have that

\begin{prop}
There hold
\[
U_p^{sc}(\cdot;[B_R, \mathrm{III}])=\check{U}_p^{sc}+\sum_{n=1}^{\infty}\sum_{m=-n}^{n} \beta_n^m \nabla u_n^m[h;k_p],
\]
and
\[
U_s^{sc}(\cdot;[B_R, \mathrm{III}])=\check{U}_s^{sc}+\sum_{n=1}^{\infty}\sum_{m=-n}^{n} \big\{\gamma_n^m \nabla\wedge M_n^m[h;k_s]+\zeta_n^m M_n^m[h;k_s]\big\},
\]
where $\widetilde{U}^{sc}_p$ and $\widetilde{U}^{sc}_s$ are given in \eqref{eq:eq_checkU} such that $\check{U}_p:=U_p^{in}+\check{U}_p^{sc}$ and $\check{U}_s:=U_s^{in}+\check{U}_s^{sc}$ satisfying the decoupling condition \eqref{eq:decompps232} on $\partial B_R$, and the constants $\beta_n^m$, $\gamma_n^m$ and $\zeta_n^m$ are implicitly given by
\begin{equation}\label{eq:eq327}
\left\{
\begin{array}{l}
t_p h'_n(t_p)\beta_n^m-n(n+1)h_n(t_s)\gamma_n^m =0,\\
\displaystyle{
2h_n(t_p)\beta_n^m+\left(t_s^2 h_n(t_s)+2\check{h}_n(t_s)\right)\gamma_n^m=\frac{2i}{R}\left(\frac{\alpha_p}{k^2_p} \frac{A_n^m(1)}{t_p h'_n(t_p)} - \frac{\alpha_s}{k^2_s} \frac{A_n^m(2)}{h_n(t_s)} \right),}
\end{array}
\right.
\end{equation}
and
\[
\zeta_n^m=-2i\frac{A_n^m(3)}{n(n+1)}\frac{\alpha_s}{t_s\widetilde{h}_n(t_s)\check{h}_n(t_s)},
\]
with $t_p=k_p R$ and $t_s=k_s R$.

Furthermore, the total field $U(\cdot;[B_R, \mathrm{III}])$ satisfies the decoupling property \eqref{eq:decompps232} on $\partial B_R$ if and only if $\beta_n^m=\gamma_n^m=\eta_n^m=0$ for all $m=-n,\ldots,n$, and $n\in\mathbb{N}$. This cannot be true unless $\alpha_s=\alpha_p=0$.
\end{prop}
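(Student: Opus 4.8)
The plan is to mirror the fourth kind ball treated in Section~\ref{sect:ball_4th}. First I would expand the physical scattered field $U^{sc}(\cdot;[B_R,\mathrm{III}])=U^{sc}_p+U^{sc}_s$ in the spherical wave functions via \eqref{eq:eqqq5} with undetermined coefficients $a_n^m$, $b_n^m$, $c_n^m$, impose the two third kind boundary conditions $\nu\cdot U=0$ and $\nu\wedge TU=0$ on $\partial B_R$, and solve for the coefficients mode by mode. Writing $a_n^m=\check a_n^m+\beta_n^m$, $b_n^m=\check b_n^m+\gamma_n^m$, $c_n^m=\check c_n^m+\zeta_n^m$, where $\check a_n^m,\check b_n^m,\check c_n^m$ are the already-constructed coefficients of the decoupled field $\check U$ in \eqref{eq:eq_checkU}, the representations in the statement are equivalent to the assertion that $(\beta_n^m,\gamma_n^m,\zeta_n^m)$ satisfy \eqref{eq:eq327} and the displayed formula for $\zeta_n^m$.

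By Lemma~\ref{lem:2} and \eqref{eq:pp37} the traction condition on the sphere simplifies, when $\nu\cdot U=0$, to $\nu\wedge\widetilde TU=0$ with $\widetilde T$ the modified operator introduced above. Substituting the expansions \eqref{eq:eq3161}, \eqref{eq:eq3162}, \eqref{eq:eq319}, \eqref{eq:eq320} and the expansion of $\nabla\wedge U^{sc}_s$ obtained from \eqref{eq:eqqq5} via \eqref{eq:eq5}--\eqref{eq:eq6}, and projecting onto the orthogonal families $Y_n^m$, $\mathrm{Grad}\,Y_n^m$ and $\hat x\wedge\mathrm{Grad}\,Y_n^m$, one gets, for each $(n,m)$: a scalar equation from $\nu\cdot U=0$ coupling $a_n^m$ and $b_n^m$ (the tangential field $M_n^m$ not contributing radially), and two equations from $\nu\wedge\widetilde TU=0$, of which the $\mathrm{Grad}\,Y_n^m$-component isolates $c_n^m$ while the $\hat x\wedge\mathrm{Grad}\,Y_n^m$-component again couples $a_n^m,b_n^m$. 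As in Section~\ref{sect:ball_4th} the choice of $\check b_n^m$ makes $\nu\cdot\check U=0$ on $\partial B_R$, so the radial equation for the differences is homogeneous; subtracting from the three equations the decoupling relations \eqref{eq:decompps232} obeyed by $\check U$ (recall $\nu\wedge(\nabla\wedge\check U)=0$) leaves an inhomogeneous $2\times2$ system for $(\beta_n^m,\gamma_n^m)$ plus one equation for $\zeta_n^m$, with right-hand sides exactly the residuals $\nu\wedge\widetilde T\check U$ recorded in \eqref{eq:eq_TUsch}--\eqref{eq:eq_TUpch}; invoking the Wronskian relation $j_n(t)h'_n(t)-j'_n(t)h_n(t)=i/t^2$ to collapse the Bessel algebra reproduces \eqref{eq:eq327} and the $\zeta_n^m$ formula. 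The determinant of the $2\times2$ system, $t_ph'_n(t_p)\bigl(t_s^2h_n(t_s)+2\check h_n(t_s)\bigr)+2n(n+1)h_n(t_p)h_n(t_s)$, is nonzero for every $n\ge1$ --- this is the unique solvability of the exterior third kind problem recalled in Section~\ref{sect:1} --- and for $n=0$ the radial equation alone forces $\beta_0^0=0$; hence $(\beta_n^m,\gamma_n^m,\zeta_n^m)$ are uniquely determined and, by uniqueness of the exterior problem, the series $\check U^{sc}$ plus these correction terms coincides with $U^{sc}(\cdot;[B_R,\mathrm{III}])$.

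Finally, \eqref{eq:decompps232} holds on $\partial B_R$ exactly when $U(\cdot;[B_R,\mathrm{III}])=\check U$ there, i.e. when all corrections vanish. From $\zeta_n^m=-2i\,A_n^m(3)\bigl(n(n+1)\bigr)^{-1}\alpha_s\bigl(t_s\widetilde h_n(t_s)\check h_n(t_s)\bigr)^{-1}$, $\zeta_n^m=0$ for all $(n,m)$ forces $\alpha_s A_n^m(3)=0$; since the vectors $\mathrm{Grad}\,Y_n^m(d)\wedge d$, $m=-n,\dots,n$, span the tangent plane at $d$ for each $n\ge1$, no nonzero $d^\perp$ is orthogonal to all of them, so some $A_n^m(3)\ne0$ and hence $\alpha_s=0$. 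With $\alpha_s=0$, the right-hand side of \eqref{eq:eq327} reduces to a nonzero multiple of $\alpha_p A_n^m(1)=4\pi i^{n+1}\alpha_p\,\overline{Y_n^m(d)}$; by the addition theorem $\sum_m|Y_n^m(d)|^2=(2n+1)/4\pi\ne0$, so $A_n^m(1)\ne0$ for some $m$ at each $n$, and since the determinant never vanishes, $\beta_n^m=\gamma_n^m=0$ for all $(n,m)$ forces $\alpha_p=0$. Thus \eqref{eq:decompps232} cannot hold unless $\alpha_s=\alpha_p=0$. I expect the main obstacle to be the bookkeeping in the reduction step --- correctly splitting $\nu\wedge TU$ and $\nu\cdot U$ into their $Y_n^m$, $\mathrm{Grad}\,Y_n^m$ and $\hat x\wedge\mathrm{Grad}\,Y_n^m$ components after Lemma~\ref{lem:2}, tracking the combinations $\check f_n,\widetilde f_n$ and the factors $k_p,k_s,R$, and recognizing the resulting system as \eqref{eq:eq327} --- a routine but error-prone computation in which the Wronskian identity is indispensable.
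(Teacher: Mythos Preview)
Your proposal is correct and follows essentially the same route as the paper: write $U=\check U+\text{corrections}$, use that $\nu\cdot\check U=0$ so the radial condition yields the homogeneous first line of \eqref{eq:eq327}, and feed the residuals $\nu\wedge\widetilde T\check U$ from \eqref{eq:eq_TUsch}--\eqref{eq:eq_TUpch} into the modified traction condition to obtain the remaining equations for $(\beta_n^m,\gamma_n^m)$ and $\zeta_n^m$. Your final paragraph on why all corrections vanishing forces $\alpha_s=\alpha_p=0$ is in fact more explicit than the paper, which leaves that conclusion to the discussion preceding the proposition; one small imprecision is that $\nu\cdot\check U=0$ relies on the choice of \emph{both} $\check a_n^m$ and $\check b_n^m$ (the latter via $\nu\cdot\check U_s=k_s^{-2}\mathrm{Div}_\Gamma(\nu\wedge(\nabla\wedge\check U_s))=0$), not just $\check b_n^m$.
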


\begin{proof}
First, one has on $\partial B_R$ that
\[
\begin{split}
  \nu\cdot U_p & =\nu\cdot \check{U}_p +\nu\cdot\sum_{n=1}^{\infty}\sum_{m=-n}^{n} \beta_n^m \nabla u_n^m[h;k_p]\\
    & =\sum_{n=1}^{\infty}\sum_{m=-n}^{n}\beta_n^m k_p h'_n(k_p R)Y_n^m(\hat{x}),
\end{split}
\]
and
\[
\begin{split}
\nu\cdot U_s&= \nu\cdot \check{U}_s +\nu\cdot\sum_{n=1}^{\infty}\sum_{m=-n}^{n} \gamma_n^m \nabla\wedge M_n^m[h;k_s]\\
    & =\sum_{n=1}^{\infty}n(n+1)\sum_{m=-n}^{n}\gamma_n^m \frac{h_n(k_s R)}{|R|}Y_n^m(\hat{x}).
\end{split}
\]
Hence, the first equality in \eqref{eq:eq327} can be obtained from $\nu\cdot U=0$ in the third kind boundary condition \eqref{eq:thirdkind} on $\partial B_R$.

Next, it is observed that
\[
\begin{split}
  \nu\wedge \widetilde{T}\left(U_p-\check{U}_p\right)& =-2\mu \frac{1}{R}\nu\wedge\sum_{n=1}^{\infty}\sum_{m=-n}^{n} \beta_n^m \nabla u_n^m[h;k_p] \\
    & =-\mu\nu\wedge\sum_{n=1}^{\infty}\sum_{m=-n}^{n}2 \beta_n^m \frac{h_n(k_pR)}{R^2}\hat{x}\wedge\mathrm{Grad}\, Y_n^m(\hat{x}),
\end{split}
\]
and
\begin{align*}
    & \quad \nu\wedge \widetilde{T}\left(U_s-\check{U}_s\right) \\
  = & -\mu \nu\wedge \left(\frac{2}{\|x\|}+\nu\wedge\nabla\wedge \right) \sum_{n=1}^{\infty}\sum_{m=-n}^{n} \big\{\gamma_n^m \nabla\wedge M_n^m[h;k_s]+\zeta_n^m M_n^m[h;k_s]\big \}\\
  =&-\mu \sum_{n=1}^{\infty}\sum_{m=-n}^{n}\gamma_n^m \left(k_s^2 h_n(k_sR)+2\frac{\check{h}(k_sR)}{R^2}\right) \hat{x}\wedge\mathrm{Grad}\, Y_n^m(\hat{x})\\
 & \quad-\mu \sum_{n=1}^{\infty}\sum_{m=-n}^{n}\zeta_n^m \frac{\widetilde{h}_n(k_sR)}{R}\mathrm{Grad}\, Y_n^m(\hat{x}).
\end{align*}
By using the above facts, the rest of the proof can be concluded by using the boundary condition $\nu\wedge \widetilde{T}U=0$ in \eqref{eq:thirdkind} on $\partial B_R$, along with the use of \eqref{eq:eq_TUsch} and \eqref{eq:eq_TUpch}.

\end{proof}

\section{Inverse elastic scattering problems}~\label{sect:4}

In this part of the paper, we consider the inverse elastic scattering problems given in \eqref{eq:ip} with the elastic scatterer being $[D;\mathrm{III}]$ or $[D;\mathrm{IV}]$. We call $[D;\mathrm{III}]$ and $[D;\mathrm{IV}]$ admissible if they satisfy the geometric conditions \eqref{eq:gcmc} and \eqref{eq:gc32}, respectively. The crucial ingredient is the following decoupling theorem which can be readily summarized from our study in Section~\ref{sect:2}.

\begin{thm}\label{thm:3}
Let $D$ be an admissible scatterer and let $U$ be the solution to the Lam\'e system \eqref{eq:lames}--\eqref{eq:radiation}. Define
\begin{equation}\label{eq:ddcc1}
v_p:=-\nabla\cdot U\quad\mbox{and}\quad E_s:=\nabla\wedge U.
\end{equation}
Then there hold
\begin{equation}\label{eq:ddcc2}
U=U_s+U_p,\quad U_s=\frac{1}{k_s^2}\nabla\wedge E_s\quad\mbox{and}\quad U_p=\frac{1}{k_p^2}\nabla v_p.
\end{equation}
Moreover, set
\begin{equation}\label{eq:ddccin}
v_p^{in}:=-\nabla\cdot U^{in}\quad\mbox{and}\quad E_s^{in}:=\nabla\wedge U^{in}.	
\end{equation}
Then $v_p$ satisfies an acoustic scattering system (cf. \eqref{eq:dsp3} or \eqref{eq:dsp332}), and $E_s$ satisfies an electromagnetic scattering system (cf. \eqref{eq:dsp61} or \eqref{eq:dsp612}). The correspondences are given as follows,
\begin{equation}\label{eq:ddcccor}
\begin{split}
& v_p(v_p^{in}; [D, \mathrm{C(K)}])=-\nabla\cdot U(U^{in}; [D, \mathrm{K}]),\\
& E_s(E_s^{in}; [D, \mathrm{B(K)}])=\nabla\wedge U(U^{in}; [D, \mathrm{K}]),
\end{split}
\end{equation}
where the notations $\mathrm{C(K)}$ and $\mathrm{B(K)}$ are defined by
\begin{equation}\label{eq:ddccno}
\begin{split}
& \mathrm{C(K)}=\mathrm{SH}\ \text{and}\ \mathrm{B(K)}=\mathrm{PEC}, \quad \,\text{if} \ \ \mathrm{K}=\mathrm{III};\\
& \mathrm{C(K)}=\mathrm{SS}\ \text{and}\ \,\mathrm{B(K)}=\mathrm{PMC}, \quad \text{if} \ \ \mathrm{K}=\mathrm{IV}.
\end{split}
\end{equation}
\end{thm}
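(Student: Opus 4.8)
The plan is to assemble the statement directly from the two decoupling results already established in Section~\ref{sect:2}, supplemented by an elementary derivation of the pointwise identities in~\eqref{eq:ddcc2}. No new analytic ingredient is needed; the point is merely to organise the conclusions of Theorems~\ref{thm:fourthkind} and~\ref{thm:thirdkind} (together with Propositions~\ref{prop:decom} and~\ref{prop:decom3}) into the unified form stated.

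First I would invoke the Helmholtz decomposition: by~\eqref{eq:decompt}, $U=U_p+U_s$ holds throughout $\mathbb{R}^3\backslash\overline{D}$, and using~\eqref{eq:planewave}, \eqref{eq:decomp1}, \eqref{eq:decompp} and~\eqref{eq:decomps} one checks that $U_p$ is irrotational with $(\Delta+k_p^2)U_p=0$, while $U_s$ is solenoidal with $(\Delta+k_s^2)U_s=0$. In particular $\nabla\cdot U=\nabla\cdot U_p=-v_p$ and $\nabla\wedge U=\nabla\wedge U_s=E_s$. From the vector identity $\Delta W=\nabla(\nabla\cdot W)-\nabla\wedge(\nabla\wedge W)$ and $\nabla\wedge U_p=0$ one gets $-k_p^2 U_p=\Delta U_p=\nabla(\nabla\cdot U_p)=-\nabla v_p$, i.e.\ $U_p=k_p^{-2}\nabla v_p$; symmetrically, $\nabla\cdot U_s=0$ gives $-k_s^2 U_s=\Delta U_s=-\nabla\wedge(\nabla\wedge U_s)=-\nabla\wedge E_s$, i.e.\ $U_s=k_s^{-2}\nabla\wedge E_s$. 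This proves~\eqref{eq:ddcc2}.

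Next I would record the governing systems. By the previous step $v_p$ is a scalar Helmholtz field and $E_s$ a divergence-free vectorial Helmholtz field (equivalently a Maxwell field), and the radiation conditions for $v_p^{sc}=-\nabla\cdot U^{sc}$ and $E_s^{sc}=\nabla\wedge U^{sc}$ follow from the Kupradze conditions~\eqref{eq:radiation} (cf.\ also~\eqref{eq:radiations1}--\eqref{eq:radiations2}) by a routine differentiation. Admissibility enters only through the boundary conditions: when $\mathrm{K}=\mathrm{IV}$, the hypothesis~\eqref{eq:gcmc} lets Theorem~\ref{thm:fourthkind} turn the fourth-kind condition into~\eqref{eq:decompps2}, and then, exactly as in the proof of Proposition~\ref{prop:decom}, one reads off $v_p=0$ and $\nu\wedge(\nabla\wedge E_s)=0$ on $\partial D$, so $v_p$ solves the sound-soft system~\eqref{eq:dsp3} and $E_s$ the PMC system~\eqref{eq:dsp61}, giving $\mathrm{C(K)}=\mathrm{SS}$ and $\mathrm{B(K)}=\mathrm{PMC}$. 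When $\mathrm{K}=\mathrm{III}$, the hypothesis~\eqref{eq:gc32} activates Theorem~\ref{thm:thirdkind}, turning the third-kind condition into~\eqref{eq:decompps232}, whence the computations of Proposition~\ref{prop:decom3} yield $\nu\cdot\nabla v_p=0$ and $\nu\wedge E_s=0$ on $\partial D$, so $v_p$ solves the sound-hard system~\eqref{eq:dsp332} and $E_s$ the PEC system~\eqref{eq:dsp612}, giving $\mathrm{C(K)}=\mathrm{SH}$ and $\mathrm{B(K)}=\mathrm{PEC}$. Applying $-\nabla\cdot$ and $\nabla\wedge$ to $U^{in}$ produces the incident data in~\eqref{eq:ddccin} (consistent with~\eqref{eq:dsp22} and~\eqref{eq:dsp5half12}), which completes~\eqref{eq:ddcccor}--\eqref{eq:ddccno}.

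There is no genuine obstacle here, since every analytic ingredient is already in place; the only point requiring a little care is to run the Helmholtz-type identities on the \emph{total} fields $U_p,U_s$ rather than merely on the scattered parts, which forces one to check at the outset that the incident pieces $U_p^{in}=\alpha_p d\, e^{ik_p d\cdot x}$ and $U_s^{in}=\alpha_s d^\perp e^{ik_s d\cdot x}$ are respectively curl-free and divergence-free — immediate from $d\wedge d=0$ and $d\cdot d^\perp=0$.
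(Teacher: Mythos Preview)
Your proposal is correct and matches the paper's own treatment: Theorem~\ref{thm:3} is presented there as a direct summary of Section~\ref{sect:2} with no separate proof, and you have simply spelled out the assembly from Theorems~\ref{thm:fourthkind}, \ref{thm:thirdkind} and Propositions~\ref{prop:decom}, \ref{prop:decom3}, together with the elementary Helmholtz-type identities behind~\eqref{eq:ddcc2}. The only minor remark is that the paper's sentence defining admissibility appears to swap the labels~\eqref{eq:gcmc} and~\eqref{eq:gc32}; your assignment (\eqref{eq:gcmc} for $\mathrm{K}=\mathrm{IV}$, \eqref{eq:gc32} for $\mathrm{K}=\mathrm{III}$) is the mathematically correct one, consistent with Theorems~\ref{thm:fourthkind} and~\ref{thm:thirdkind}.
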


The next proposition establishes the correspondences between the far-fields patterns of the decomposed wave fields in Theorem~\ref{thm:3}, which can be obtained by direct calculations.

\begin{prop}\label{prop:ddcc1}
Let $U, U_s, U_p$ and $v_p, E_s$ be the scattering wave fields given in Theorem~\ref{thm:3}. Let $U_t^\infty, U_s^\infty$  and $U_p^\infty$ be given in \eqref{eq:farfieldt}, namely,
\begin{equation}\label{eq:farfieldt3}
U_t^\infty(\hat x):=U_p^\infty(\hat x)+U_s^\infty(\hat x).
\end{equation}
Let $v_p^\infty$ and $E_s^\infty$ be the far-field patterns, respectively, of $v_p$ and $E_s$ that are read off from the following asymptotic expansions as $\|x\|\rightarrow+\infty$,
\begin{equation}\label{eq:ddcc3}
v_p(x)=v_p^{in}(x)+\frac{e^{ik_p\|x\|}}{\|x\|} v_p^\infty(\hat x)+\mathcal{O}\left(\frac{1}{\|x\|^{2}}\right),
\end{equation}
and
\begin{equation}\label{eq:ddcc3}
E_s(x)=E_s^{in}(x)+\frac{e^{ik_s\|x\|}}{\|x\|} E_s^\infty(\hat x)+\mathcal{O}\left(\frac{1}{\|x\|^{2}}\right).
\end{equation}
Then there hold
\begin{equation}\label{eq:ddcc4}
U_p^\infty(\hat x)=\frac{i}{k_p} v_p^\infty(\hat x)\hat x,\qquad U_s^\infty(\hat x)=\frac{i}{k_s}\hat x\wedge E_s^\infty(\hat x),
\end{equation}
and
\begin{equation}\label{eq:ddcc5}
v_p^\infty(\hat x)=-ik_p\hat x\cdot U_p^\infty(\hat x),\qquad E_s^\infty(\hat x)=ik_s\hat x\wedge U_s^\infty(\hat x).
\end{equation}
\end{prop}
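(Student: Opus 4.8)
The plan is to derive \eqref{eq:ddcc4} by differentiating the far-field asymptotics of the scattered fields $v_p^{sc}:=v_p-v_p^{in}$ and $E_s^{sc}:=E_s-E_s^{in}$ and reading off the leading coefficients, and then to obtain \eqref{eq:ddcc5} simply by inverting \eqref{eq:ddcc4}. First, by Theorem~\ref{thm:3}, $v_p^{sc}$ is a radiating solution of $(\Delta+k_p^2)v_p^{sc}=0$ and $E_s^{sc}$ is a radiating, divergence-free solution of $(\Delta+k_s^2)E_s^{sc}=0$, so that
\[
v_p^{sc}(x)=\frac{e^{ik_p\|x\|}}{\|x\|}v_p^\infty(\hat x)+\mathcal{O}(\|x\|^{-2}),\qquad E_s^{sc}(x)=\frac{e^{ik_s\|x\|}}{\|x\|}E_s^\infty(\hat x)+\mathcal{O}(\|x\|^{-2}),
\]
as $\|x\|\to\infty$, and these expansions may be differentiated term by term with the remainder staying of order $\mathcal{O}(\|x\|^{-2})$ (the standard property of radiating solutions; compare the convergent spherical-wave expansions \eqref{eq:3}--\eqref{eq:eqq2}). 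I would also record, using \eqref{eq:ddccin} and the decomposition \eqref{eq:decomp1}, that $v_p^{sc}=-\nabla\cdot U^{sc}$ and $E_s^{sc}=\nabla\wedge U^{sc}$, whence $U_p^{sc}=k_p^{-2}\nabla v_p^{sc}$ and $U_s^{sc}=k_s^{-2}\nabla\wedge E_s^{sc}$.

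For the pressure part, since $\nabla\|x\|=\hat x$ one has $\nabla\big(e^{ik_p\|x\|}/\|x\|\big)=ik_p\hat x\,e^{ik_p\|x\|}/\|x\|+\mathcal{O}(\|x\|^{-2})$, while $\nabla\big(v_p^\infty(\hat x)\big)=\mathcal{O}(\|x\|^{-1})$ because $v_p^\infty$ depends only on the angular variable; hence
\[
U_p^{sc}(x)=\frac{1}{k_p^2}\nabla v_p^{sc}(x)=\frac{i}{k_p}\,v_p^\infty(\hat x)\,\hat x\,\frac{e^{ik_p\|x\|}}{\|x\|}+\mathcal{O}(\|x\|^{-2}),
\]
and comparing with the defining expansion \eqref{eq:farfield} (recall $U_p^\infty$ is the far-field pattern of $U_p^{sc}$) gives $U_p^\infty(\hat x)=\tfrac{i}{k_p}v_p^\infty(\hat x)\hat x$. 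For the shear part, writing $f(x)=e^{ik_s\|x\|}/\|x\|$ and using $\nabla\wedge(fV)=\nabla f\wedge V+f\,\nabla\wedge V$ with $\nabla f=ik_s\hat x f+\mathcal{O}(\|x\|^{-2})$ and $\nabla\wedge\big(E_s^\infty(\hat x)\big)=\mathcal{O}(\|x\|^{-1})$, one gets
\[
U_s^{sc}(x)=\frac{1}{k_s^2}\nabla\wedge E_s^{sc}(x)=\frac{i}{k_s}\,\hat x\wedge E_s^\infty(\hat x)\,\frac{e^{ik_s\|x\|}}{\|x\|}+\mathcal{O}(\|x\|^{-2}),
\]
so that $U_s^\infty(\hat x)=\tfrac{i}{k_s}\hat x\wedge E_s^\infty(\hat x)$, establishing \eqref{eq:ddcc4}.

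Finally, \eqref{eq:ddcc5} follows by inversion: taking the scalar product of $U_p^\infty=\tfrac{i}{k_p}v_p^\infty\hat x$ with $\hat x$ and using $|\hat x|=1$ yields $v_p^\infty(\hat x)=-ik_p\,\hat x\cdot U_p^\infty(\hat x)$; for the shear part, the far-field pattern of a radiating divergence-free Helmholtz (equivalently, Maxwell) field is tangential, $\hat x\cdot E_s^\infty(\hat x)=0$ (cf. \cite{CK12}, or directly from \eqref{eq:eqq2}), so that $\hat x\wedge\big(\hat x\wedge E_s^\infty\big)=-E_s^\infty$, and applying $\hat x\wedge$ to $U_s^\infty=\tfrac{i}{k_s}\hat x\wedge E_s^\infty$ gives $E_s^\infty(\hat x)=ik_s\,\hat x\wedge U_s^\infty(\hat x)$. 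The only point deserving care — and it is standard rather than deep — is the legitimacy of differentiating the far-field asymptotics term by term so that the remainder stays $\mathcal{O}(\|x\|^{-2})$; this is guaranteed because $v_p^{sc}$ and $E_s^{sc}$, as radiating solutions of the respective Helmholtz systems, possess convergent spherical-wave expansions whose term-by-term derivatives converge uniformly on compact subsets of the exterior domain. Everything else is bookkeeping.
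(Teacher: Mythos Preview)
Your proof is correct and is precisely the ``direct calculations'' that the paper alludes to but omits; there is nothing to add or compare, since the paper provides no further argument beyond that phrase.
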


Clearly, by using the correspondences given in Theorem~\ref{thm:3} and Proposition~\ref{prop:ddcc1}, the relevant results for the inverse acoustic and electromagnetic scattering problems can be extended to the inverse elastic scattering problems with admissible scatterers of the third and fourth kinds. We are particularly interested in the inverse problem of determining $D$ in \eqref{eq:ip} by knowledge of a single far-field measurement; that is, $U_\tau^\infty(\hat x; U^{in})$, $\tau=t, p$ or $s$, is collected corresponding to a fixed incident plane wave $U^{in}$. There is a widespread belief that the uniqueness can be established for such an inverse problem. However, it still remains to be challengingly open. As discussed in Section~\ref{sect:1}, in a recent work \cite{ElY10}, the authors established two uniqueness results in determining polyhedral scatterer of the third and fourth kinds. By using the decoupling results in Theorem~\ref{thm:3} and Proposition~\ref{prop:ddcc1}, it can be shown that much more general uniqueness results hold for the inverse elastic scattering problems with polyhedral scatterers. Moreover, we can step much further in driving optimal stability estimates for the inverse elastic problem through the extension of the recent stability results in \cite{LMRX} for the inverse acoustic scattering problems. It is emphasized that depending on the presence of screen-components or not, we shall make use of a single or two or three far-field measurements in the subsequent uniqueness study, and these are known to be the minimum number of measurements required. 

\subsection{Uniqueness results}

\begin{defn}\label{def:p}
A bounded open subset of a two-dimensional plane is called a \emph{cell}. A set $D$ in $\mathbb{R}^3$ is said to be a \emph{polyhedral scatterer} if $\overline{D}$ is compact, $\mathbb{R}^3\backslash\overline{D}$ is connected and $\partial \overline{D}$ consists of finitely many closures of cells. A polyhedral scatterer $D$ is said to be a \emph{polyhedral obstacle} if $D$ is open.
\end{defn}

In what follows, we let $\mathscr{P}_s$ and $\mathscr{P}_o$ denote the classes of polyhedral scatterers and obstacles, respectively. Clearly, $\mathscr{P}_o\Subset\mathscr{P}_s$. We note for clarity that for a scatterer in $\mathscr{P}_s$, it may contain a component which is solely a cell (it is referred to as a screen-component), whereas for an obstacle in $\mathscr{P}_o$, all of its components must be solid. It is easily seen that any $[D,\mathrm{III}]$ or $[D, \mathrm{IV}]$ contained in $\mathscr{A}_s$ is admissible which satisfies the geometric conditions \eqref{eq:gcmc} and \eqref{eq:gc32}. If $D\in\mathscr{P}_o$, then the corresponding forward elastic scattering problem is well-understood, completely covered in our earlier discussion. However, if $D\in\mathscr{P}_s$, we cannot find a convenient reference for the well-posedness of the corresponding forward elastic scattering problem. In the sequel, when a scatterer $D\in\mathscr{P}_s$ is considered, we always assume the unique existence of an $H_{loc}^1(\mathbb{R}^3\backslash\overline{D})$ solution to the forward scattering problem, and focus on the study of the corresponding inverse problems, which is the main theme of the current section. Moreover, in order to ease the discussion, we present the uniqueness results for general polyhedral scatterers, whereas we only discuss the stability results for polyhedral obstacles.

\begin{thm}\label{thm:uniquenesso}
Let $D$ and $D'$ be two polyhedral obstacles in $\mathscr{P}_o$ such that
\begin{equation}\label{eq:uo1}
U_t^\infty(\hat x; U^{in}, [D, \mathrm{K}])=U_t^\infty(\hat x; U^{in}, [D', \mathrm{K}'])\quad\forall \hat x\in\mathbb{S}^2,
\end{equation}
where $U^{in}$ is given in \eqref{eq:planewave} with either $\alpha_p\neq 0$ or $\alpha_s\neq 0$, and $\mathrm{K}, \mathrm{K}'=\mathrm{III}$ or $\mathrm{IV}$ denote the respective kinds of the obstacles. Then one has
\begin{equation}\label{eq:uod1}
[D, \mathrm{K}]=[D', \mathrm{K}'].	
\end{equation}
That is, both the obstacle and its physical type can be uniquely determined by a single total far-field measurement. Furthermore, if $\alpha_p\neq 0$, $U_t^\infty$ in \eqref{eq:uo1} can be replaced by $U_p^\infty$; and if $\alpha_s\neq 0$, $U_t^\infty$ in \eqref{eq:uo1} can be replaced by $U_s^\infty$, then the same conclusion holds true.
\end{thm}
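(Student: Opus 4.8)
The plan is to reduce the inverse elastic problem to the corresponding inverse problems for the Helmholtz and Maxwell systems via the decoupling machinery of Theorem~\ref{thm:3} and Proposition~\ref{prop:ddcc1}, and then invoke the known uniqueness results for polyhedral sound-soft, sound-hard, PEC and PMC scatterers (cf.\ \cite{Liu1,Liu2,ElY10}). First I would fix the incident wave $U^{in}$ in \eqref{eq:planewave} and split the argument according to which of $\alpha_p,\alpha_s$ is nonzero. In the case $\alpha_p\neq0$, from \eqref{eq:uo1} together with \eqref{eq:farfieldd} one recovers the P-part $U_p^\infty(\hat x; U^{in},[D,\mathrm{K}])=U_p^\infty(\hat x; U^{in},[D',\mathrm{K}'])$ for all $\hat x\in\mathbb{S}^2$; by Proposition~\ref{prop:ddcc1}, equation \eqref{eq:ddcc5}, this is equivalent to equality of the scalar far-field patterns $v_p^\infty(\hat x;[D,\mathrm{C(K)}])=v_p^\infty(\hat x;[D',\mathrm{C(K')}])$, where $\mathrm{C(K)}$ is the sound-hard (resp.\ sound-soft) boundary type when $\mathrm{K}=\mathrm{III}$ (resp.\ $\mathrm{IV}$). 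One then applies the uniqueness theorem for polyhedral scatterers in acoustic scattering with a single far-field measurement (the path/reflection argument of \cite{Liu1,Liu2}, which handles simultaneously sound-soft and sound-hard components) to conclude that $[D,\mathrm{C(K)}]=[D',\mathrm{C(K')}]$, and in particular $D=D'$ and $\mathrm{C(K)}=\mathrm{C(K')}$, whence $\mathrm{K}=\mathrm{K'}$ since the map $\mathrm{K}\mapsto\mathrm{C(K)}$ is injective by \eqref{eq:ddccno}. The case $\alpha_s\neq0$ is symmetric: one extracts the S-part, converts it by \eqref{eq:ddcc5} to equality of the electric far-field patterns $E_s^\infty$, and invokes the corresponding uniqueness result for polyhedral PEC/PMC scatterers in electromagnetic scattering with a single measurement; again injectivity of $\mathrm{K}\mapsto\mathrm{B(K)}$ recovers the physical type.

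A subtle point is that both $\alpha_p$ and $\alpha_s$ may be nonzero simultaneously, and then the two reductions must be carried out with care because the incident data $v_p^{in}=-\nabla\cdot U^{in}$ and $E_s^{in}=\nabla\wedge U^{in}$ from \eqref{eq:ddccin} are, respectively, a plane pressure wave and a plane shear wave of nonzero amplitude whenever $\alpha_p\neq0$ (resp.\ $\alpha_s\neq0$); in either subcase at least one of the two decoupled problems carries a nontrivial incident field, so the corresponding single-measurement uniqueness result applies and delivers $D=D'$ and the physical type. I would also remark that the passage from $U_t^\infty$ to $U_p^\infty$ or $U_s^\infty$ is lossless because $U_p^\infty$ is purely radial and $U_s^\infty$ purely tangential, so the decomposition \eqref{eq:farfieldd} recovers each part from the total pattern; this is exactly why the last sentence of the theorem holds, i.e.\ measuring only $U_p^\infty$ (when $\alpha_p\neq0$) or only $U_s^\infty$ (when $\alpha_s\neq0$) already suffices.

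The main obstacle I anticipate is not the elastic-to-acoustic/EM translation, which is a clean consequence of the decoupling identities, but rather ensuring that the invoked acoustic and electromagnetic uniqueness theorems are stated in sufficient generality: they must cover (i) \emph{mixed} boundary types across different connected components of the same polyhedral scatterer, since a priori $D$ could have both third- and fourth-kind faces in the elastic picture translating to both sound-soft and sound-hard (resp.\ PEC and PMC) components, and (ii) the simultaneous determination of the \emph{type} in addition to the shape. If the cited results only treat a single, globally fixed boundary condition, I would need to either localize the reflection-principle argument face by face (using that on each cell the relevant homogeneous condition for $v_p$ or $E_s$ is the one dictated by Theorem~\ref{thm:3}) or prove a short auxiliary lemma: two polyhedral obstacles producing the same far-field pattern, where each face may independently carry either of the two admissible conditions, must coincide as sets and face-wise in type. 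Handling this "mixed-type, simultaneous shape-and-type" uniqueness carefully is where the real work lies; the rest is bookkeeping via \eqref{eq:ddcc4}--\eqref{eq:ddcc5}.
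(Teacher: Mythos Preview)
Your approach is essentially the same as the paper's: reduce via Theorem~\ref{thm:3} and Proposition~\ref{prop:ddcc1} to the acoustic and electromagnetic far-field identities, then invoke the corresponding single-measurement uniqueness theorems for polyhedral scatterers. The paper dispatches your anticipated obstacle directly by citing \cite{Liu3} (partially coated polyhedral scatterers, acoustic case) and \cite{Liu4} (electromagnetic case), both of which already deliver simultaneous shape-and-type determination; with those references in hand no auxiliary lemma is needed. Note also that your worry about ``mixed boundary types across different connected components of the same polyhedral scatterer'' is moot in this theorem: by hypothesis each obstacle carries a single global type $\mathrm{K}$ (resp.\ $\mathrm{K}'$), so in the decoupled picture one is simply comparing a polyhedral sound-soft or sound-hard obstacle with another such obstacle, which is exactly the setting of \cite{Liu3}.
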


\begin{proof}
First, we assume that $\alpha_p\neq 0$. By the correspondences in Theorem~\ref{thm:3} and Proposition~\ref{prop:ddcc1}, we have from \eqref{eq:uo1}
that
\begin{equation}\label{eq:uo2}
v_p^\infty(\hat x; v_p^{in}, [D, \mathrm{C(K)}])=v_p^\infty(\hat x; v_p^{in}, [D', \mathrm{C(K')}])\quad\forall\hat x\in\mathbb{S}^2,
\end{equation}
for $\mathrm{K}, \mathrm{K}'=\mathrm{III}$ or $\mathrm{IV}$, with $\mathrm{C(K)}$ given by \eqref{eq:ddccin}. Then by the corresponding uniqueness result in \cite{Liu3} for inverse acoustic scattering problems, one readily deduces \eqref{eq:uod1}. Next, in the case $\alpha_s\neq 0$, we have from \eqref{eq:uo1} that
\begin{equation}\label{eq:uo3}
E_s^\infty(\hat x; E_s^{in}, [D, \mathrm{B(K)}])=E_s^\infty(\hat x; E_s^{in}, [D',\mathrm{B(K')}])\quad\forall\hat x\in\mathbb{S}^2,
\end{equation}
where $\mathrm{K}, \mathrm{K}'=\mathrm{III}$ or $\mathrm{IV}$, and the notation $\mathrm{B(K)}$ is defined in \eqref{eq:ddccin}. Now, the uniqueness can be readily deduced from the corresponding results for inverse electromagnetic scattering problems in \cite{Liu4}. The other cases with $U_t^\infty$ replaced by $U_p^\infty$ or $U_s^\infty$ can be treated in a completely similar manner.

The proof is complete.
\end{proof}

\begin{thm}\label{thm:uniquenesss}
Let $[D,\mathrm{K}]$ and $[D',\mathrm{K}]$ be two polyhedral scatterers in $\mathscr{P}_s$ where $\mathrm{K}=\mathrm{III}$ or $\mathrm{IV}$ represents the type of the scatterers.
Then one has
\begin{equation}\label{eq:usd1}
	D=D',
\end{equation}
provided
\begin{equation}\label{eq:us1}
	U_t^\infty(\hat x; U^{in}, [D, \mathrm{K}])=U_t^\infty(\hat x; U^{in}, [D', \mathrm{K}])\quad\forall \hat x\in\mathbb{S}^2,
\end{equation}
hold(s) for one of the following cases:
\begin{enumerate}
	\item[(i)] $\mathrm{K}=\mathrm{IV}$ and $\alpha_p\neq 0$, along with one single incident field $U^{in}=U^{in}(\cdot; d, d^{\perp},\alpha_p,\alpha_s,\omega)$ given by \eqref{eq:planewave};
	\item[(ii)] $\mathrm{K}=\mathrm{IV}$ and $\alpha_s\neq 0$, with two incident fields given by $U^{in}=U^{in}(\cdot;d,d_j^{\perp},\alpha_p,\alpha_s,\omega)$, $j=1,2$, such that $d$ and $d_j^{\perp}$, $j=1,2$, are linearly independent;
	\item[(iii)] $\mathrm{K}=\mathrm{III}$ and $\alpha_p\neq 0$, with the incident  $U^{in}=U^{in}(\cdot;d_j,d^{\perp},\alpha_p,\alpha_s,\omega)$, $j=1,2,3$, such that $d_1$, $d_2$ and $d_3$ are linearly independent;
	\item[(iv)] $\mathrm{K}=\mathrm{III}$ and $\alpha_s\neq 0$, with $U^{in}=U^{in}(\cdot;d,d_j^{\bot},\alpha_p,\alpha_s,\omega)$, $j=1,2$, such that $d_1^{\bot}$ and $d_2^{\bot}$ are linearly independent.
\end{enumerate}

Furthermore, if $\alpha_p\neq 0$ (namely, in the cases (i) and (iii)), $U_t^\infty$ in \eqref{eq:us1} can be replaced by $U_p^\infty$; and if $\alpha_s\neq 0$ (in the cases (ii) and (iv)), $U_t^\infty$ in \eqref{eq:uo1} can be replaced by $U_s^\infty$, then the same conclusion holds true.
\end{thm}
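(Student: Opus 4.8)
The plan is to transform, case by case, the elastic far-field identity \eqref{eq:us1} into a far-field identity for an inverse acoustic or inverse electromagnetic scattering problem by means of the decoupling in Theorem~\ref{thm:3} together with the far-field correspondences in Proposition~\ref{prop:ddcc1}, and then to invoke the corresponding known uniqueness results for polyhedral scatterers. The first step is to record that the reduced incident fields are nontrivial plane waves of the right type: since $d\cdot d^\perp=0$, the pressure part of $U^{in}$ is curl free and its shear part is divergence free, so $v_p^{in}=-\nabla\cdot U^{in}=-(i\alpha_p k_p)e^{ik_p d\cdot x}$ (cf.\ \eqref{eq:dsp22}), a nonzero scalar plane wave precisely when $\alpha_p\neq 0$, while $E_s^{in}=\nabla\wedge U^{in}=i\alpha_s k_s\,(d\wedge d^\perp)e^{ik_s d\cdot x}$ (cf.\ \eqref{eq:dsp5half12}), a nonzero electromagnetic plane wave with propagation direction $d$ and polarization $d\wedge d^\perp$ precisely when $\alpha_s\neq 0$. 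Applying $-\nabla\cdot$ or $\nabla\wedge$ to \eqref{eq:us1} and using the algebraic relations \eqref{eq:farfieldd}, \eqref{eq:ddcc4} and \eqref{eq:ddcc5} then shows that \eqref{eq:us1} is equivalent to $v_p^\infty(\cdot; v_p^{in}, [D, \mathrm{C(K)}])=v_p^\infty(\cdot; v_p^{in}, [D', \mathrm{C(K)}])$ when $\alpha_p\neq 0$, and to $E_s^\infty(\cdot; E_s^{in}, [D, \mathrm{B(K)}])=E_s^\infty(\cdot; E_s^{in}, [D', \mathrm{B(K)}])$ when $\alpha_s\neq 0$, with $\mathrm{C(K)}$ and $\mathrm{B(K)}$ as in \eqref{eq:ddccno}.

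With this reduction in hand, the four cases are dispatched by matching the hypotheses to the admissible configurations of the cited polyhedral uniqueness theorems. In case (i), $[D,\mathrm{IV}]$ becomes a sound-soft polyhedral scatterer interrogated by the single plane wave $v_p^{in}$, and \eqref{eq:usd1} follows from the single-measurement uniqueness result for sound-soft polyhedral scatterers in \cite{Liu3}. In case (iii), $[D,\mathrm{III}]$ becomes a sound-hard polyhedral scatterer interrogated by the three plane waves $v_{p,j}^{in}=-(i\alpha_p k_p)e^{ik_p d_j\cdot x}$, $j=1,2,3$; since $d_1,d_2,d_3$ are linearly independent, this is exactly the admissible three-measurement configuration for the corresponding uniqueness result for sound-hard polyhedral scatterers in \cite{Liu3}. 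In cases (ii) and (iv), $[D,\mathrm{IV}]$ (respectively $[D,\mathrm{III}]$) becomes a PMC (respectively PEC) polyhedral scatterer interrogated by the two electromagnetic plane waves sharing the direction $d$ and having polarizations $d\wedge d_1^\perp$ and $d\wedge d_2^\perp$; since $d_1^\perp$ and $d_2^\perp$ are non-parallel unit vectors orthogonal to $d$ (equivalently, $\{d,d_1^\perp,d_2^\perp\}$ is linearly independent), the two polarizations $d\wedge d_1^\perp$, $d\wedge d_2^\perp$ are themselves linearly independent, and this is precisely the admissible two-measurement configuration for the uniqueness results for PMC and PEC polyhedral scatterers in \cite{Liu4}. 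In each case one concludes $D=D'$.

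For the final assertion, note that by \eqref{eq:ddcc4}--\eqref{eq:ddcc5} the patterns $U_p^\infty$ and $v_p^\infty$ determine each other, and likewise $U_s^\infty$ and $E_s^\infty$; hence, when $\alpha_p\neq 0$ (resp.\ $\alpha_s\neq 0$) replacing $U_t^\infty$ by $U_p^\infty$ (resp.\ $U_s^\infty$) supplies exactly the same information about $D$ as was used above, and the argument carries over verbatim. The point I expect to be the crux is making sure the invoked uniqueness theorems in \cite{Liu3,Liu4} genuinely apply to scatterers in $\mathscr{P}_s$ that may possess screen-type (purely cell) components, and that the numbers of incident fields prescribed in (i)--(iv) are exactly those for which the underlying reflection-principle and path arguments close up in the sound-soft, sound-hard, PMC and PEC settings respectively; the remaining work—verifying the nontriviality of the reduced incident data and the prescribed independence of their directions and polarizations—is then essentially bookkeeping.
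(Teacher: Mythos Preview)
Your strategy is the same as the paper's: use Theorem~\ref{thm:3} and Proposition~\ref{prop:ddcc1} to convert the elastic far-field identity into an acoustic or electromagnetic one, then cite known polyhedral uniqueness results. The paper treats cases (i), (iii), (iv) exactly as you do (though it cites \cite{AR05,Liu1} for sound-soft, \cite{Liu1} for sound-hard, and \cite{Liu2} for PEC, rather than \cite{Liu3,Liu4}; your own caveat about whether \cite{Liu3,Liu4} cover screen-type components is well taken---the paper deliberately switches references here from those it used for obstacles in Theorem~\ref{thm:uniquenesso}).

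The one place where the paper's argument differs from yours in substance is case~(ii). You appeal directly to a PMC uniqueness result with polarizations $d\wedge d_1^\perp,\,d\wedge d_2^\perp$. The paper instead invokes electromagnetic duality: starting from $E_s(\,\cdot\,;E_s^{in},[D,\mathrm{PMC}])$, it sets $\widetilde{E}_s:=-\tfrac{i}{k_s}\nabla\wedge E_s$ and observes that $\widetilde{E}_s$ solves the Maxwell system with the \emph{PEC} boundary condition and incident field $\widetilde{E}_s^{in}=-i\alpha_sk_s\,d_j^\perp e^{ik_s d\cdot x}$ (polarizations $d_1^\perp,d_2^\perp$), then applies the PEC uniqueness from \cite{Liu2}. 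This extra step is needed because the cited electromagnetic reference treats PEC; your shortcut is fine only if a PMC analogue for polyhedral scatterers with screens is independently available. If not, inserting the duality conversion \eqref{eq:dsp51}--\eqref{eq:dsp5half21} closes the gap with no further ideas required.
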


\begin{proof}
Similar to the proof of Theorem~\ref{thm:uniquenesso}, the primary strategy here is the application of Theorem~\ref{thm:3} and Proposition~\ref{prop:ddcc1}, along with the use of the relevant uniqueness results for inverse acoustic and electromagnetic scattering problems. 

If $\mathrm{K}=\mathrm{IV}$ and $\alpha_p\neq 0$, similar to \eqref{eq:uo2}, we first obtain that
\begin{equation}\label{eq:us2}
v_p^\infty(\hat x; v_p^{in}, [D, \mathrm{SS}])=v_p^\infty(\hat x; v_p^{in}, [D', \mathrm{SS}])\quad\forall\hat x\in\mathbb{S}^2,
\end{equation}
with $v_p^{in}$ given by \eqref{eq:ddccin}.
By the corresponding uniqueness result in inverse acoustic scattering problems for sound-soft polyhedral scatters (cf. \cite{AR05,Liu1}), one readily has \eqref{eq:usd1}. In the case that $\mathrm{K}=\mathrm{III}$ and $\alpha_p\neq 0$, we obtain from \eqref{eq:ddcccor} that
\begin{equation}\label{eq:us21}
v_p^\infty(\hat x; v_{p,j}^{in}, [D, \mathrm{SH}])=v_p^\infty(\hat x; v_{p,j}^{in}, [D', \mathrm{SH}])\quad\forall\hat x\in\mathbb{S}^2,\quad j=1,2,3,
\end{equation}
where
\begin{equation}
v_{p,j}^{in}:=-\nabla\cdot U^{in}=-(i\alpha_pk_p)e^{ik_px\cdot d_j}, \quad j=1,2,3,
\end{equation}
with three linearly independent directions $d_1$, $d_2$ and $d_3$. Hence, \eqref{eq:usd1} can be shown by the uniqueness results for inverse sound-hard problems (c.f. \cite{Liu1}).

If $\mathrm{K}=\mathrm{IV}$ with $\alpha_s\neq 0$, we have from Theorem~\ref{thm:3} and Proposition~\ref{prop:ddcc1} that
\begin{equation}\label{eq:us3}
E_s^\infty(\hat x; E_s^{in}, [D, \mathrm{PMC}])=E_s^\infty(\hat x; E_s^{in}, [D', \mathrm{PMC}])\quad\forall\hat x\in\mathbb{S}^2,
\end{equation}
with
\begin{equation}\label{eq:us4}
E_s^{in}(x)=\nabla\wedge U^{in}(x;d,d_j^{\perp},\alpha_p,\alpha_s,\omega)=i\alpha_s k_s\, ( d\wedge d_j^{\perp}) e^{ik_sd\cdot x},\quad j=1,2.
\end{equation}
Define
\begin{equation}\label{eq:us5}
\widetilde{E}_s:=-\frac{i}{k_s} \nabla\wedge E_s(\cdot; E_s^{in}, [D, \mathrm{PMC}]),\quad \widetilde{E}'_s:=-\frac{i}{k_s} \nabla\wedge E_s(\cdot; E_s^{in}, [D', \mathrm{PMC}]).
\end{equation}
Similar to \eqref{eq:dsp51}-\eqref{eq:dsp5half21}, one has that $(\widetilde{E}_s,-E_s(\cdot; E_s^{in}, [D, \mathrm{PMC}]))$ solves the Maxwell system \eqref{eq:dsp61} with $\widetilde{E}_s^{in}$ given in \eqref{eq:dsp52}, and the PMC boundary condition in \eqref{eq:dsp61} on $\partial D$ replaced by the PEC one \eqref{eq:dsp5half21}; and $(\widetilde{E}'_s,-E_s(\cdot; E_s^{in}, [D',$ $\mathrm{PMC}]))$ solves the same problem except for the scatterer $D$ replaced by $D'$. That is,
\begin{equation}\label{eq:us6}
\widetilde{E}_s=E_s(\cdot; \widetilde{E}_s^{in}, [D, \mathrm{PEC}]),\quad \widetilde{E}'_s= E_s(\cdot; \widetilde{E}_s^{in}, [D', \mathrm{PEC}]),
\end{equation}
with two incident plane waves
\begin{equation}\label{eq:us7}
\widetilde{E}_s^{in}=- i\alpha_s k_s\, d_j^{\perp} e^{ik_sd\cdot x},\quad j=1,2.
\end{equation}
Therefore, from \eqref{eq:us3}, \eqref{eq:us5} and \eqref{eq:us6}, one observes that
\begin{equation}
E_s^\infty(\hat x; \widetilde{E}_s^{in}, [D, \mathrm{PEC}])=E_s^\infty(\hat x; \widetilde{E}_s^{in}, [D', \mathrm{PEC}])\quad\forall\hat x\in\mathbb{S}^2,
\end{equation}
for $\widetilde{E}_s^{in}$ given in \eqref{eq:us7} with two linearly independent vectors $d_1^{\perp}$ and $d_2^{\perp}$.
Hence, \eqref{eq:usd1} can be obtained by the uniqueness results for inverse PEC problems in \cite{Liu2}.

In the last case for $\mathrm{K}=\mathrm{III}$ and $\alpha_s\neq 0$, similar to \eqref{eq:us3}, one observes that
\begin{equation}\label{eq:us8}
E_s^\infty(\hat x; E_{s,j}^{in}, [D, \mathrm{PEC}])=E_s^\infty(\hat x; E_{s,j}^{in}, [D', \mathrm{PEC}])\quad\forall\hat x\in\mathbb{S}^2,\quad j=1,2,
\end{equation}
with
\begin{equation}\label{eq:us9}
E_{s,j}^{in}(x)=i\alpha_s k_s\, ( d\wedge d_j^{\perp}) e^{ik_sd\cdot x},\quad j=1,2.
\end{equation}
It is easy to see that $d\wedge d_1$ and $d\wedge d_2$ are linearly independent, since $d_1$ and $d_2$ are linearly independent and both are perpendicular to $d$. Now, \eqref{eq:usd1} can be deduced in the same way as that for the case (iii).

Finally, for the other cases with $U_t^\infty$ replaced by $U_p^\infty$ or $U_s^\infty$, one can show the corresponding uniqueness results by using completely similar arguments as above. The proof is complete. 
\end{proof}

We would like to point out that, the two uniqueness results in \cite{ElY10} in determining an elastic polyhedral scatterer are, respectively, case (i) and part of case (iv) in Theorem~\ref{thm:uniquenesso}. 
%
A crucial ingredient in \cite{ElY10} to prove the corresponding uniqueness results therein is the so called ``reflection principle'' for solutions to the Lam\'e system \eqref{eq:lames}, which was developed from the corresponding reflection principles for the Helmholtz equation and the Maxwell equations, respectively (cf. \cite{AR05,Liu1,Liu2}).  
By using the decoupling results in Theorem~\ref{thm:3}, the next proposition gives a simple proof of the reflection principle for the Lam\'e system. It can be straightforwardly used to explain the reflection principles in \cite{ElY10} that were critical in the proofs of the relevant uniqueness results. Moreover, the reflection principle might find important applications in other context on elastic wave scattering. 

\begin{prop}
Let the admissible scatterer $D$, and the functions $U$, $E_s$ and $v_p$, be the same as in Theorem~\ref{thm:3}. Let $\Pi$ be a two-dimensional plane, and let $\Pi'$ be the affine projection of $\Pi$, namely, $\Pi'$ be the parallel plane of $\Pi$ passing through the origin.
Set
\begin{equation}\label{eq:refle1}
\widetilde{U}(x):=R_{\Pi'}U(R_{\Pi}x),\quad x\in R_{\Pi} D^c,
\end{equation}
where $D^c$ is given by $D^c:=\mathbb{R}^3\setminus \overline{D}$, and $R_{\Pi}$ represents the geometric reflection with respect to the plane $\Pi$.
Then one has
\begin{equation}\label{eq:refle2}
\widetilde{U}=\widetilde{U}_p+\widetilde{U}_s:=\frac{1}{k_p^2}\nabla\,\widetilde{v}_p+\frac{1}{k_s^2}\nabla\wedge\widetilde{E}_s,
\end{equation}
and
\begin{equation}\label{eq:refle3}
\widetilde{v}_p=-\nabla\cdot \widetilde{U}\quad\mbox{and}\quad \widetilde{E}_s:=\nabla\wedge \widetilde{U},
\end{equation}
with
\begin{equation}\label{eq:refle4}
\widetilde{v}_p(x):=v_p(R_{\Pi}(x)),\quad \widetilde{E}_s(x):=-R_{\Pi'}E_s(R_{\Pi}x),\quad x\in R_{\Pi} D^c.
\end{equation}
\end{prop}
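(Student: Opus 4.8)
The plan is to reduce the whole statement to the elementary change-of-variables calculus for the operators $\nabla$, $\nabla\cdot$ and $\nabla\wedge$ under an affine orthogonal map; once the relevant transformation rules are in hand, \eqref{eq:refle2}, \eqref{eq:refle3} and \eqref{eq:refle4} all drop out by applying them to the data $U$, $v_p$, $E_s$ furnished by Theorem~\ref{thm:3}. Fix a unit normal $n\in\mathbb{S}^2$ of $\Pi$, so that $R_{\Pi}x=Qx+b$ with $Q:=R_{\Pi'}=I-2nn^{\mathrm T}$ and $b$ a fixed translation vector; here $Q$ is orthogonal and symmetric, $Q^{\mathrm T}Q=I$, and $\det Q=-1$. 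Thus \eqref{eq:refle1} reads $\widetilde U(x)=QU(Qx+b)$, and since translations do not affect differentiation, all the identities below depend only on $Q$.

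The transformation rules I would record are the following. For a scalar field $\phi$ and a vector field $F$, set $\widetilde\phi(x):=\phi(R_{\Pi}x)$ and $\widetilde F(x):=QF(R_{\Pi}x)$. A direct chain-rule computation using only $Q^{\mathrm T}Q=I$ and $Q^{\mathrm T}=Q$ gives
\[
\nabla\widetilde\phi(x)=Q\,(\nabla\phi)(R_{\Pi}x),\qquad \nabla\cdot\widetilde F(x)=(\nabla\cdot F)(R_{\Pi}x),
\]
while the analogous computation for the curl, which additionally invokes the identity $\sum_{j,k}\varepsilon_{ijk}Q_{aj}Q_{bk}=\det(Q)\sum_{t}\varepsilon_{tab}Q_{ti}$ valid for orthogonal $Q$, yields
\[
\nabla\wedge\widetilde F(x)=\det(Q)\,Q\,(\nabla\wedge F)(R_{\Pi}x)=-\,Q\,(\nabla\wedge F)(R_{\Pi}x).
\]
Applying the divergence rule to $F=U$ gives $-\nabla\cdot\widetilde U(x)=-(\nabla\cdot U)(R_{\Pi}x)=v_p(R_{\Pi}x)=\widetilde v_p(x)$, and applying the curl rule gives $\nabla\wedge\widetilde U(x)=-Q(\nabla\wedge U)(R_{\Pi}x)=-R_{\Pi'}E_s(R_{\Pi}x)=\widetilde E_s(x)$; this is exactly \eqref{eq:refle3} with $\widetilde v_p,\widetilde E_s$ as defined in \eqref{eq:refle4}.

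For \eqref{eq:refle2}, I would transform the decomposition $U=\frac{1}{k_p^{2}}\nabla v_p+\frac{1}{k_s^{2}}\nabla\wedge E_s$ of Theorem~\ref{thm:3}: since $\widetilde U(x)=QU(R_{\Pi}x)=QU_p(R_{\Pi}x)+QU_s(R_{\Pi}x)$, the gradient rule gives $QU_p(R_{\Pi}x)=\frac{1}{k_p^{2}}Q(\nabla v_p)(R_{\Pi}x)=\frac{1}{k_p^{2}}\nabla\widetilde v_p(x)$, while the curl rule applied to $\widetilde E_s(x)=-QE_s(R_{\Pi}x)$ gives $\nabla\wedge\widetilde E_s(x)=-\det(Q)\,Q(\nabla\wedge E_s)(R_{\Pi}x)=Q(\nabla\wedge E_s)(R_{\Pi}x)$, so that $QU_s(R_{\Pi}x)=\frac{1}{k_s^{2}}\nabla\wedge\widetilde E_s(x)$; adding the two identities gives \eqref{eq:refle2}. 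Alternatively, the identities above show that $\widetilde U$ again solves $(\Delta^{*}+\omega^{2})\widetilde U=0$ in $R_{\Pi}D^{c}$ --- $\Delta^{*}$ being invariant under rigid motions and reflections, cf. \eqref{eq:deltas} --- so that \eqref{eq:refle2} also follows from the universal decomposition \eqref{eq:decomp1}--\eqref{eq:decomps}, which is valid for any local $H^{1}$ solution of the Lam\'e system by Remark~\ref{rem:decomp4th}. I do not anticipate any substantive obstacle here; the only point requiring genuine care is the orthogonal change-of-variables bookkeeping for $\nabla\wedge$, where the factor $\det R_{\Pi'}=-1$ must be tracked precisely --- it is exactly this sign that forces the minus sign in the definition of $\widetilde E_s$ in \eqref{eq:refle4}.
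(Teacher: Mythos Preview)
Your proposal is correct and follows essentially the same route as the paper: both arguments rest on the change-of-variables identities $\nabla\widetilde v_p(x)=R_{\Pi'}(\nabla v_p)(R_\Pi x)$ and $\nabla\wedge\widetilde E_s(x)=R_{\Pi'}(\nabla\wedge E_s)(R_\Pi x)$, which are then plugged into the decomposition $U=\frac{1}{k_p^2}\nabla v_p+\frac{1}{k_s^2}\nabla\wedge E_s$ from Theorem~\ref{thm:3}. The only cosmetic difference is order --- you establish \eqref{eq:refle3} first and \eqref{eq:refle2} second, whereas the paper does the reverse --- and you are more explicit about the $\det R_{\Pi'}=-1$ bookkeeping that the paper subsumes under ``straightforward computations''.
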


\begin{proof}
It is verified with straightforward computations that the reflected $\widetilde{v}_p$ defined in \eqref{eq:refle4} is invariant under gradient, namely,
\begin{equation}\label{eq:refle5}
\nabla\,\widetilde{v}_p(x)=R_{\Pi'}\left(\nabla v_p\right)(R_{\Pi}x).
\end{equation} 
Similarly, $\widetilde{E}_s$ given by \eqref{eq:refle4} is invariant under the curl operator, that is,
\begin{equation}\label{eq:refle6}
\nabla\wedge\widetilde{E}_s(x)=R_{\Pi'}\left(\nabla\wedge E_s\right)(R_{\Pi}x).
\end{equation}
Thus, by applying \eqref{eq:ddcc2} one can derive for any $x\in R_{\Pi} D^c$ that
\begin{equation}\label{eq:refle7}
\begin{split}
\widetilde{U}(x) 
& =R_{\Pi'}U(R_{\Pi}x)\\
& =\frac{1}{k_s^2}R_{\Pi'}\left(\nabla\wedge E_s\right)(R_{\Pi}x)+\frac{1}{k_p^2}R_{\Pi'}\left(\nabla v_p\right)(R_{\Pi}x)\\
&=\frac{1}{k_s^2}\nabla\wedge\widetilde{E}_s(x)+\frac{1}{k_p^2}\nabla\,\widetilde{v}_p(x).
\end{split}
\end{equation}
Taking gradient and curl, respectively, on both sides of \eqref{eq:refle7}, one readily obtains \eqref{eq:refle3}.

The proof is complete. 
\end{proof}

\subsection{Stability results}

In this section, we present the stability estimates in determining polyhedral obstacles by a single far-field measurement, which quantifies the uniqueness results in Theorem~\ref{thm:uniquenesso}. 

First, we introduce some preliminary definitions and statements before presenting the main stability results.

Given $n\in\mathbb{N}$, we say that a domain $\Omega\subset\mathbb{R}^n$ is Lipschitz with constant $r$ and $L$ if the following assumption holds.
For any $x\in\partial \Omega$, we first change rigidly the local coordinates such that $x=0$. There exists a function $\varphi_x:\mathbb{R}^{n-1}\to\mathbb{R}$, which is Lipschitz with Lipschitz constant bounded by $L$, such that $\varphi(0)=0$, and
$$B_r(x)\cap \Omega = \{y=(y',\,y_n)\in B_r(x);y_n\in\mathbb{R},\, y'\in\mathbb{R}^{n-1},\,y_n<\varphi(y')\},$$
and consequently,
$$B_r(x)\cap \partial\Omega = \{y=(y',\,y_n)\in B_r(x);y_n\in\mathbb{R},\, y'\in\mathbb{R}^{n-1},\, y_n=\varphi(y')\}.$$
%

A scatterer $D\subset\mathbb{R}^3$ is said to be polyhedral with constants $h$ and $L$, if $\partial \overline{D}$ is composed of a finite union of the closures of cells,
\begin{equation}\label{eq:2_2_Dp=}
\partial \overline{D}=\bigcup_{j=1}^{m} \overline{\mathcal{C}_j},
\end{equation}
where $m\in\mathbb{N}$ is a certain finite (unknown) number, and each $\mathcal{C}_j$, $1\leq j\leq m$, is a Lipschitz domain in $\mathbb{R}^2$ with constants $h$ and $L$; and two different cells may intersect only at boundary points.

In this paper, the distance between two scatterers $D$ and $D'$ in $\mathbb{R}^3$ is measured by the Hausdorff distance $d_H$,
\begin{equation}\label{eq:Haus}
d_H(D,D'):=\max\left\{ \sup_{x\in D}\text{dist}(x, D'), \ \sup_{x\in D'} \text{dist}(x, D) \right\}.
\end{equation}

Given positive constants $r$, $L$, $R$ and $h$, let $\mathscr{P}_o^h=\mathscr{P}_o^h(r,L,R)$ denotes the set of obstacles $D\subset B_{R}$ such that, $D$ is a Lipschitz domain with constants $r$ and $L$, and that $D$ is polyhedral with constants $h$ and $L$. In the rest of this section, we shall aways let the positive constants $r$, $L$ and $R$, and the elements $\alpha_p$, $\alpha_s$, $d$, $d^{\perp}$ and $\omega$ mentioned in \eqref{eq:planewave}, and the wave numbers $k_p$ and $k_s$ be fixed; they are all referred to as the \textsl{a priori data}. We also keep the other positive number $h$ being fixed. It is noted that $h$ is not included into the a priori data.

\begin{thm}
Let $[D,\mathrm{K}]$ and $[D',\mathrm{K}]$ be two polyhedral obstacles in $\mathscr{P}_o^h=\mathscr{P}_o^h(r,L,R)$, where $\mathrm{K}=\mathrm{III}$ or $\mathrm{IV}$ represents the type of the scatterers. Suppose that
\begin{equation}\label{eq:stabi1}
\|U_t^\infty(\cdot; U^{in}, [D, \mathrm{K}])-U_t^\infty(\cdot; U^{in}, [D', \mathrm{K}])\|_{L^2(\mathbb{S}^2)}\leq \varepsilon,
\end{equation}
for a single incident wave $U^{in}$ given by \eqref{eq:planewave}. Then there exists a positive number $\widetilde{\varepsilon}(h;\mathrm{K})$, depending only on $h$, on the a priori data, and on the scatterer type $\mathrm{K}$, such that if $\varepsilon\leq\widetilde{\varepsilon}(h;\mathrm{K})$, then one has
\begin{equation}\label{eq:d_main2}
d_H(D, D')\leq C [\psi(s\varepsilon)]^{\alpha},
\end{equation}
where $s$ is a certain number depending only on the a priori data which shall be specified below; the function $\psi:(0,1/e)\rightarrow (0,1)$ is defined by
\[
\psi(t)=\exp\left(-\left(\log\left(-\log t\right)\right)^{1/2}\right);
\]
$C$ is a positive constant depending only on the a priori data and the scatterer type $\mathrm{K}$, but indpendent on $h$; and the positive constant $\alpha$ depends only on the a priori data and the scatterer type $\mathrm{K}$, on or not on $h$; with more specifications as follows,
\begin{enumerate}
	\item[(i)] if $\mathrm{K}=\mathrm{IV}$ and $\alpha_p\neq 0$, then $s=k_p$, the constant $\alpha$ in \eqref{eq:d_main2} can be chosen so that it depends only on the a priori data, not on $h$; moreover, $U_t^\infty$ in \eqref{eq:stabi1} can be replaced by $U_p^\infty$;
	\item[(ii)] if $\mathrm{K}=\mathrm{III}$ and $\alpha_p\neq 0$, then $s=k_p$, the number $\alpha$ in \eqref{eq:d_main2} does depend on $h$ in general; and $U_t^\infty$ in \eqref{eq:stabi1} can also be replaced by $U_p^\infty$;
\end{enumerate}
\end{thm}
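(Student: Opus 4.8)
The plan is to transfer the stability question for the elastic problem to the already-established stability estimates for inverse acoustic and electromagnetic scattering by polyhedral obstacles, precisely as was done for the uniqueness results in Theorem~\ref{thm:uniquenesso} and Theorem~\ref{thm:uniquenesss}. The essential tools are the decoupling correspondences of Theorem~\ref{thm:3} together with the far-field relations of Proposition~\ref{prop:ddcc1}. First I would treat the case $\mathrm{K}=\mathrm{IV}$, $\alpha_p\neq 0$. Using \eqref{eq:ddcc5}, the hypothesis \eqref{eq:stabi1} (with $U_t^\infty$ or $U_p^\infty$) yields
\begin{equation}\label{eq:stabplan1}
\|v_p^\infty(\cdot; v_p^{in}, [D, \mathrm{SS}])-v_p^\infty(\cdot; v_p^{in}, [D', \mathrm{SS}])\|_{L^2(\mathbb{S}^2)}\leq k_p\varepsilon,
\end{equation}
since $v_p^\infty=-ik_p\,\hat x\cdot U_p^\infty$ is a pointwise-bounded linear operation on the far-field, and likewise $U_p^\infty = (i/k_p)v_p^\infty\hat x$ is recovered from $v_p^\infty$; this is where the factor $s=k_p$ enters. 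One checks that $[D,\mathrm{IV}]\in\mathscr{P}_o^h$ implies the associated sound-soft obstacle lies in the same Lipschitz/polyhedral class with the same constants (the passage $D\mapsto v_p$ does not alter the geometry of $D$), so the a priori data for the acoustic problem are those for the elastic problem. Then I would invoke the stability estimate for inverse acoustic scattering by sound-soft polyhedral obstacles (the result extended from \cite{LMRX}), which furnishes exactly the bound $d_H(D,D')\leq C[\psi(k_p\varepsilon)]^\alpha$ with $\psi$ as stated, and with $\alpha$ depending only on the acoustic a priori data — hence, crucially, not on $h$ in the sound-soft case. This gives item (i).

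For item (ii), $\mathrm{K}=\mathrm{III}$, $\alpha_p\neq 0$, the argument is identical except that the decoupled acoustic problem is now the sound-hard one, via the correspondence $\mathrm{C(III)}=\mathrm{SH}$ in \eqref{eq:ddccno}. So \eqref{eq:stabplan1} is replaced by the analogous inequality with $[D,\mathrm{SH}]$ and $[D',\mathrm{SH}]$, and one applies the stability estimate for inverse sound-hard polyhedral obstacle scattering. The only structural difference is that the known stability result in the sound-hard (Neumann) case has its Hölder exponent $\alpha$ depending on the number-of-cells bound encoded by $h$ — this is a genuine feature of the Neumann reflection/path argument — so we record that $\alpha$ depends on $h$ in general. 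In both items, the threshold $\widetilde\varepsilon(h;\mathrm{K})$ and the constant $C$ are inherited from the corresponding acoustic stability theorem after rescaling $\varepsilon\mapsto k_p\varepsilon$; $C$ can be kept independent of $h$ because in the relevant estimates $h$ affects only the exponent, not the multiplicative constant.

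The one point requiring care — and the main obstacle — is verifying that the decoupling map $D\mapsto([D,\mathrm{C(K)}],v_p)$ and its inverse are \emph{stable}, i.e. that the $L^2(\mathbb{S}^2)$ distance of the elastic far-fields and that of the acoustic far-fields are comparable up to a constant depending only on the a priori data. This follows from \eqref{eq:ddcc4}--\eqref{eq:ddcc5}: the maps $U_p^\infty\mapsto v_p^\infty$ and $v_p^\infty\mapsto U_p^\infty$ are given by multiplication by $\hat x$ (respectively $-ik_p$ and $i/k_p$), which are bounded operators on $L^2(\mathbb{S}^2)$ with norm controlled by $k_p$, a fixed a priori constant; and if only $U_t^\infty$ is measured, one first extracts $U_p^\infty=[U_t^\infty\cdot\hat x]\hat x$ by \eqref{eq:farfieldd}, again a bounded pointwise projection. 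Hence no information is lost or amplified beyond the explicit factor $s$. Once this equivalence is in hand, the proof is a direct citation of the acoustic stability results; I would also remark that the analogous statements for $\mathrm{K}$ with $\alpha_s\neq 0$ (replacing $U_t^\infty$ by $U_s^\infty$ and appealing to the electromagnetic stability theory) would follow by the same scheme, modulo the availability of the corresponding EM stability estimate.
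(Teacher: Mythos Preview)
Your proposal is correct and follows essentially the same route as the paper's own proof: reduce the elastic far-field bound to an acoustic far-field bound via the relations \eqref{eq:ddcc4}--\eqref{eq:ddcc5} (picking up the factor $s=k_p$), identify $[D,\mathrm{IV}]$ with $[D,\mathrm{SS}]$ and $[D,\mathrm{III}]$ with $[D,\mathrm{SH}]$ through Theorem~\ref{thm:3}, and then cite the known acoustic stability estimates. The only cosmetic differences are that the paper first uses the orthogonal splitting $\|U_t^\infty\|_{L^2(\mathbb{S}^2)}^2=\|U_p^\infty\|_{L^2(\mathbb{S}^2)}^2+\|U_s^\infty\|_{L^2(\mathbb{S}^2)}^2$ (from \eqref{eq:farfieldd}) rather than your projection argument to pass from $U_t^\infty$ to $U_p^\infty$, and it attributes the sound-soft stability estimate to \cite{Ron2} rather than to \cite{LMRX}.
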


\begin{proof}
Let $U^{sc}$ be a radiating solution to the L\'ame system \eqref{eq:lames}, and let $U_p^\infty$, $U_s^\infty$ and $U_t^\infty$ be the corresponding far-field pattern given by \eqref{eq:farfield} and \eqref{eq:farfieldt}. First, by \eqref{eq:farfieldd} one observes that 
\begin{equation}\label{eq:stabi2}
\|U_t^\infty\|_{L^2(\mathbb{S}^2)}=\|U_p^\infty\|_{L^2(\mathbb{S}^2)}+\|U_s^\infty\|_{L^2(\mathbb{S}^2)}.
\end{equation}	
It is clearly that $U_t(\cdot; U^{in}, [D, \mathrm{K}])-U_t(\cdot; U^{in}, [D', \mathrm{K}])$ is a radiating solution to the L\'ame system \eqref{eq:lames} in $\mathbb{R}^3\setminus (\overline{D\cup D'})$. Thus, \eqref{eq:stabi1} along with \eqref{eq:stabi2} implies that
\begin{equation}\label{eq:stabi3}
\xi(\tau,\mathrm{K}):=\|U_\tau^\infty(\cdot; U^{in}, [D, \mathrm{K}])-U_\tau^\infty(\cdot; U^{in}, [D', \mathrm{K}])\|_{L^2(\mathbb{S}^2)}\leq \varepsilon,
\end{equation}
for $\tau=p,s$ and $t$.

Denote
\begin{equation}
v_p(\mathrm{K}):=-\nabla\cdot U(\cdot;U^{in}, [D, \mathrm{K}])\quad \text{and} \quad v'_p(\mathrm{K}):=-\nabla\cdot U(\cdot;U^{in}, [D', \mathrm{K}]).
\end{equation}
It is deduced by Theorem~\ref{thm:3} that
\begin{equation}
v_p(\mathrm{K})=v_p(v_p^{in}; [D, \mathrm{C(K)}])\quad \text{and} \quad v'_p(\mathrm{K})=v_p(v_p^{in}; [D', \mathrm{C(K)}]),
\end{equation}
with $v_p^{in}$ and $C(K)$ given by \eqref{eq:ddccin} and \eqref{eq:ddccno}, respectively. Therefore, one observes from Proposition~\ref{prop:ddcc1} that
\begin{equation}
\|v_p^\infty(\mathrm{K})-{v'_p}^\infty(\mathrm{K})\|_{L^2(\mathbb{S}^2)}\leq k_p\xi(p,\mathrm{K})\leq k_p\varepsilon,
\end{equation}
where $v_p^\infty(\mathrm{K})$ ${v'_p}^\infty(\mathrm{K})$ are defined similarly to $v_p^\infty$ in Proposition~\ref{prop:ddcc1}. Now, the stability results in case (i) can be obtained by the corresponding stability estimate of inverse acoustic scattering for polyhedral sound-soft obstacles in \cite{Ron2}; and in case (ii) can be concluded by the stability estimate for determining sound-hard obstacles in \cite{LMRX}.

The proof is complete. 

\end{proof}

\section{Concluding remarks}

In this paper, we consider time-harmonic elastic wave scattering governed by the Lam\'e system associated with a third or fourth kind scatterer. It is well know that the elastic wave field can be decomposed into the pressure (longitudinal) and shear (transversal) parts. Generally, the pressure and shear waves coexist, but propagating at different speeds. We derive two geometric conditions, related to the mean and Gaussian curvatures of the boundary surface of the underlying scatterer, which can ensure the completely decoupling of the pressure and shear waves. Definitely, this phenomenon is of significant physical interest. On the other hand, if the decoupling occurs, then mathematical reduction occurs where the elastic wave equation can be decoupled into two simpler Helmholtz and Maxwell systems. Using the decoupling results, we obtain comprehensive uniqueness results for inverse elastic scattering problem in determining polyhedral scatterers by the minimal number of far-field measurements. Furthermore, we quantify the uniqueness result in determining polyhedral obstacles to derive optimal stability estimate of logarithmic type by a single far-field measurement. This is the first stability result in the literature for inverse elastic obstacle scattering problem by a single far-field measurement. We believe that the decoupling results can find more important applications in other context. For example, in \cite{HuL}, the invisibility cloaking of elastic waves was considered. Since the pressure and shear waves propagate at different speeds, one may first decouple them and then cloak them separately. In doing so, more favorable cloaking devices can be achieved. We shall explore such an idea in a forthcoming article.

\section*{Acknowledgement}

The work was supported by the FRG grants from Hong Kong Baptist University, Hong Kong RGC General Research Funds, 12302415 and 405513, and the NSF grant of China, No. 11371115.

\end{document}